\begin{document}

\title{On $p$-Dirac Equation on Compact Spin Manifolds}
%

\author{Lei Xian and Xu Yang*}
%
%
%
\institute{School of Mathematics , Yunnan Normal University\\
\email{yangxu@ynnu.edu.cn}}

\maketitle              
\renewcommand\refname{References}
\renewcommand{\abstractname}{Abstract}
\begin{abstract}
By using the Ljusternik-Schnirelman principle, we establish the existence of a nondecreasing sequence of nonnegative eigenvalues for the p-Dirac operator on compact spin manifold. Using the biorthogonal system theory on separable Banach space and  some critical point theorems, we prove the existence and  multiplicity of solutions to p-superlinear and p-sublinear nonlinear p-Dirac equations on compact spin manifold.
\keywords{p-Dirac operator,  L-S principle, Variational methods}
\end{abstract}
\section{Introduction and main results}\label{sec:1}
\setcounter{equation}{0}

Motivated by quantum physics, Esteban-S\'er\'e \cite{EsSe1, EsSe2} studied
 existence and multiplicity of  solutions of nonlinear Dirac
equations on $\mathbb{R}^3$. In this case, a large number of existence of solution has been obtained.
Dirac operators on compact spin manifolds play prominent role in the geometry and mathematical physics, such as the generalized Weierstrass representation of the surface in three manifolds \cite{Fri} and the supersymmetric nonlinear sigma model in quantum field theory \cite{CJLW1,CJLW2}.
 For the nonlinear Dirac equations on a general compact spin manifold, some results were recently obtained.
Ammann studied in \cite{Amm1,Amm2} the special from of the equation $D\psi=\lambda|\psi|^{p-2}\psi$ for $\lambda>0$
and $2<p\le\frac{2m}{m-1}$ 1 on a general compact spin manifold. By the special character of the nonlinearity$\lambda|\psi|^{p-2}\psi$,  the problem in
this case is equivalent to the “dual” variational problem and a solution of the latter is
obtained by minimizing a certain functional which is bounded. In this way Ammann
obtained a non-trivial solution for each subcritical convex case $2<p<\frac{2m}{m-1}$. He
also gave an existence criterion (which is similar to the condition given by Aubin \cite{Aub} for a resolution of the Yamabe problem) for the critical case $p=\frac{2m}{m-1}$.
Notice that this form of nonlinear Dirac equations arise as spinorial analogue of Yamabe problem \cite{Amm1,Amm2,AHM},
The problem of the existence of conformally immersed hypersurfaces in a manifold \cite{Amm1,Amm2} and spinorial analogue of the supersymmetric extension of harmonic
maps \cite{CJLW1,CJLW2}, Ammann and his collaborators’ result is recently extended to the equations of the form $D\psi=H(x)|\psi|^{p-2}\psi$
by Raulot \cite{RaSi} also by considering the dual definite functional. See also \cite{Iso2},  where the present author proved the existence
and compactness of solutions to the equation of the form $D\psi=\lambda\psi+|\psi|^{\frac{2}{m-1}}\psi$.
For general nonlinearity, Isobe\cite{Iso1} consider Dirac equations on compact spin manifold via Galerkin type approximations and linking arguments, the existence of infinitely many solutions is obtained under  Ambrosetti-Rabinowitz condition.
In addition, be different with these existing works, Nolder and Ryan introduced $p-$Dirac operators and $p-$Harmonic
sections on spin manifolds in \cite{Nol1}. They established the conformal covariance of these operators and obtained existence results for solutions to $p$-Dirac and $p$-Harmonic equation on the sphere $S^n$. In\cite{Nol3}, Nolder given the solutions to nonlinear $A$-Dirac equations of the form $DA(x,Du)=0$. $p$-Dirac equation, appears in \cite{Nol1}, is a special case of $A$-Dirac equations. $A$-Dirac equations are nonlinear generalizations of the elliptic equations of $A$-harmonic type $\text{div} A(x,\nabla u)=0$.

As for the author’s knowledge, except for these works, general existence result has not
been established for the equation $D_{p}\psi=\lambda|\psi|^{p-2}\psi$ ,
or more generally, it is natural to ask whether there exist infinitely
many solutions to the equation (\ref{eq1.1}). In this paper, we are concerned with the existence of solutions to  $p$-Dirac equations on compact spin manifold which is an analogue of $p$-Laplacian equations from a variational point of view,see \cite{BoRo,Cos,Lin,MaRo}.

Assume $m\ge 2$ is an integer.
 Given an $m$-dimensional compact oriented Riemannian manifold
  $(M,g)$ equipped with a spin structure $\rho:P_{Spin(M)}\rightarrow P_{SO(M)}$,  let $\mathbb{S}(M)=\mathbb{S}(M,g)=P_{Spin(M)} \times_{\sigma}\mathbb{S}_m$ denote
the complex spinor bundle on $M$, which is a complex vector bundle of rank $2^{[m/2]}$ endowed with the spinorial Levi-Civita connection $\nabla$ and a pointwise Hermitian scalar product $\langle\cdot,\cdot\rangle$.  Write the point of $\mathbb{S}$ as
$(x, \psi)$, where $x\in M$ and $\psi\in \mathbb{S}_x(M)$. Laplace operator on spinors $\Delta:C^\infty(M,\mathbb{S}(M))\rightarrow C^\infty(M,\mathbb{S}(M))$ is defined by
$$
\Delta\psi=-\sum_{i=1}^n(\nabla_{e_i}\nabla_{e_i}-\nabla_{\nabla_{e_i}e_i})\psi,
$$
 it is independent of the choice of local orthonormal frame  and  is a second order differential elliptic operator.
 \par
 The Dirac operator $D$ is an elliptic differential operator of order one,
 $$
 D=D_g:C^{\infty}(M,\mathbb{S}(M))\rightarrow C^{\infty}(M,\mathbb{S}(M)),
 $$
 locally given by $D\psi=\sum^m_{i=1}e_i\cdot\nabla_{e_i}\psi$ for $\psi\in
C^{\infty}(M,\mathbb{S}(M))$ and a local $g$-orthonormal frame
$\{e_i\}^m_{i=1}$ of the tangent bundle $TM$. It is an unbounded essential self-adjoint operator in $L^2(M,\mathbb{S}(M))$ with the domain $C^\infty(M,\mathbb{S}(M))$,
and its spectrum consists of an unbounded sequence of real numbers.
these mean that the closure of $D$, is a self-adjoint operator in $L^2(M,\mathbb{S}(M))$  with the domain $H^1(M,\mathbb{S}(M))$.

Similar to $p$-Laplacian operator, $p$-Dirac operator
$$D_{p}: C^{\infty}(M,\mathbb{S}(M))\to C^{\infty}(M,\mathbb{S}(M))$$
 is defined in \cite{Nol1}, where $D_{p}\psi:=D|D\psi|^{p-2}D\psi$.

If $p=2$, then $D_{2}\psi:=D^2\psi$ , The square $D^2$ of the Dirac operator as well as that of the Laplace operator $\Delta$ is second order differential operator, which is called Dirac-Laplace operator. On the square Dirac operator has been discussed a lot in the literature,c.f.\cite{Fri,LiG,WoC}. the connection between $D^2$ and $\Delta$ is given by Schr\"{o}dinger-Lichnerowicz formula
\begin{equation*}
  D^2=\Delta+\frac{R}{4}\text{Id},
\end{equation*}
where $R$ is the scalar curvature of $(M,g)$.
\par
For a fiber preserving nonlinear map $h:\mathbb{S}(M)\rightarrow \mathbb{S}(M)$,
we consider the following nonlinear $p$-Dirac equation
\begin{equation}\label{eq1.1}
    D_{p}\psi=h(x,\psi(x))\enspace\text{on} \enspace M,
\end{equation}
where the point of $\mathbb{S}(M)$ is written as $(x,\psi)$ with $x\in M$ and $\psi \in \mathbb{S}_{x}(M)$
(the fiber over the point $x$) and $\psi\in C^{\infty}(M,\mathbb{S}(M))$ is a spinor.

We assume $h$ has a $\psi$-potential, namely there exist real valued function $H:\mathbb{S}(M)\rightarrow \mathbb{R}$ such that $H_\psi=h$, then equation (\ref{eq1.1}) has a variational structure.
 It is  the Euler-Lagrange equation of the functional
\begin{eqnarray}\label{eq1.2}
\mathfrak{L}(\psi)=\frac{1}{p}\int_M\langle D_p\psi,\psi\rangle dx
-\int_M H(x,\psi(x))dx
\end{eqnarray}
on the Sobolev space $H^{1,p}(M,\mathbb{S}(M))$, see\cite{Ada}. Here  $dx$ is the Riemannian volume measure on $M$ with respect to the metric $g$, and $\langle\cdot,\cdot\rangle$ is the compatible metric on $\mathbb{S}(M)$.

From a variational point of view, the functional \eqref{eq1.2} is interesting and difficult mainly because it’s indefinite character and Lack of spectral properties with $p$-Dirac operator on compact spin manifold.
On the one hand, as far as i know, there is no results on the spectrum of $p$-Dirac operator on compact spin manifold at present. However similar to quasilinear elliptic equation $\Delta_p \psi= h(x,\psi)$ on $\Omega\subset\mathbb{R}^n$. As everyone knows, many results have been obtained on the structure of the spectrum for the $p$-Laplacian operator on a bounded domain in European space, i.e., there exists a nondecreasing sequence of nonnegative eigenvalues and the first eigenvalue is simple and isolate, the spectrum of $p$-Laplacian operator is a closed set,see \cite{LAn}. By referring to these ideas, in this note, we only
 show the existence  of a nondecreasing sequence of nonnegative eigenvalues for $p$-Dirac operator on compact spin manifold.
 On the other hand, a suitable function space to work with the functional $\mathfrak{L}$ is the separable Sobolev space $H^{1,p}(M,\mathbb{S}(M))$, which is not a Hilbert space. In order to use the critical point theorem, we introduce the theory of biorthogonal system in (\cite{LiTz},p42-43). Through the the theory of biorthogonal system, the pace $H^{1,p}(M,\mathbb{S}(M))$ can be decomposed into the form we need.

Through the article, $C$ and $C_i,i=1,2,3\cdot\cdot\cdot$ represent a positive real number, $p>1$ and  we assume that the Dirac operator $D$ is invertible. as follow, we give the existence and multiplicity results for solutions to the
equation \eqref{eq1.1} for a general class of nonlinearities $h=H_{\psi}$.
In order to deal with such a general class of nonlinearities, we shall work directly with the strongly indefinite functional \eqref{eq1.2}. Henceforth,
we assume $H\in C^{0}(\mathbb{S}(M))$ and it is continuously differentiable in the
fiber direction, $i.e.$, $H_{\psi}\in C^{0}(\mathbb{S}(M))$.

On the one hand, we consider the p-superlinear case, we shall work on the following class of nonlinearities $H$:
\begin{enumerate}
    \item[$(H1)$]  For any $(x,\psi)\in \mathbb{S}(M)$, there exist $q\in (p,p^*)$ and $C_{1}>0$ such that
    \begin{equation}
        |H_{\psi}(x,\psi)|\le C_{1}\left(1+|\psi|^{q-1}\right)
        \nonumber
    \end{equation}
    where $p^*=\frac{mp}{m-p}$.
    \item[$(H2)$] There exist $\mu>p$ and $R_{1}>0$ such that
    \begin{equation}
        0<\mu H(x,\psi)\le \left\langle H_{\psi}(x,\psi),\psi\right\rangle
        \nonumber
    \end{equation}
    for any $(x,\psi)\in\mathbb{S}(M)$ with $|\psi|\ge R_{1}$.
    \item[$(H3)$] $H(x,\psi)=o(|\psi|^{p})$ as $|\psi|\to0$ uniformly for $x\in M$.
    \item[$(H4)$] $H(x,\psi)=H(x,-\psi)$ for any $(x,\psi)\in \mathbb{S}(M)$.
\end{enumerate}

our first existence result for the p-superlinear case is the following:

\begin{theorem}\label{th1.1}
    Assume $H$  satisfies $(H1), (H2), (H3)$.
    Then there exists a weak solution $\psi\in H^{1,p}(M,\mathbb{S}(M))$ to the equation \eqref{eq1.1}.
\end{theorem}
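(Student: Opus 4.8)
The plan is to obtain the solution as a mountain pass critical point of $\mathfrak{L}$. First I would simplify the energy: since $D$ is (essentially) self-adjoint, integrating by parts gives $\int_M\langle D_p\psi,\psi\rangle\,dx=\int_M\langle|D\psi|^{p-2}D\psi,D\psi\rangle\,dx=\int_M|D\psi|^p\,dx$, so that
\begin{equation*}
\mathfrak{L}(\psi)=\frac{1}{p}\int_M|D\psi|^p\,dx-\int_M H(x,\psi)\,dx .
\end{equation*}
Because $D$ is assumed invertible, the $L^p$-elliptic estimate for the first order elliptic operator $D$ (combined with invertibility to absorb the lower order term) yields $\|\psi\|_{H^{1,p}}\le C\|D\psi\|_{L^p}$, so that $\|\psi\|:=\|D\psi\|_{L^p}$ is a norm equivalent to the standard one on $H^{1,p}(M,\mathbb{S}(M))$ and the energy term $\frac1p\|\psi\|^p$ is positive; the functional then fits the mountain pass framework directly. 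Using $(H1)$ together with the continuous embeddings $H^{1,p}\hookrightarrow L^r$ for $p\le r\le p^*$, a routine argument shows $\mathfrak{L}\in C^1$ with $\langle\mathfrak{L}'(\psi),\varphi\rangle=\int_M|D\psi|^{p-2}\langle D\psi,D\varphi\rangle\,dx-\int_M\langle H_\psi(x,\psi),\varphi\rangle\,dx$, whose critical points are exactly the weak solutions of (\ref{eq1.1}).

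Next I would verify the two geometric conditions. For the behaviour near the origin, $(H3)$ and $(H1)$ give, for every $\varepsilon>0$, a constant $C_\varepsilon$ with $H(x,\psi)\le\varepsilon|\psi|^p+C_\varepsilon|\psi|^q$; since $q\in(p,p^*)$ the embeddings produce $\mathfrak{L}(\psi)\ge\frac{1}{p}\|\psi\|^p-\varepsilon C\|\psi\|^p-C_\varepsilon C\|\psi\|^q$, so choosing $\varepsilon$ small and then $\|\psi\|=\rho$ small enough gives $\mathfrak{L}(\psi)\ge\alpha>0$ on the sphere $\|\psi\|=\rho$. For the far field, integrating $(H2)$ gives $H(x,\psi)\ge c_1|\psi|^\mu-c_2$ with $\mu>p$, whence for any fixed $\psi_0\ne0$ one has $\mathfrak{L}(t\psi_0)\le\frac{t^p}{p}\|\psi_0\|^p-c_1 t^\mu\int_M|\psi_0|^\mu\,dx+c_2\,\mathrm{vol}(M)\to-\infty$ as $t\to+\infty$; fixing $e=t_0\psi_0$ with $t_0$ large yields $\|e\|>\rho$ and $\mathfrak{L}(e)<0$.

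The crucial step is the Palais--Smale condition. Given $(\psi_n)$ with $\mathfrak{L}(\psi_n)\to c$ and $\mathfrak{L}'(\psi_n)\to0$, I would combine the two relations in the usual Ambrosetti--Rabinowitz way:
\begin{equation*}
\mathfrak{L}(\psi_n)-\tfrac{1}{\mu}\langle\mathfrak{L}'(\psi_n),\psi_n\rangle=\Big(\tfrac{1}{p}-\tfrac{1}{\mu}\Big)\|\psi_n\|^p+\int_M\Big(\tfrac{1}{\mu}\langle H_\psi(x,\psi_n),\psi_n\rangle-H(x,\psi_n)\Big)\,dx .
\end{equation*}
By $(H2)$ the integrand is bounded below (nonnegative where $|\psi_n|\ge R_1$ and bounded on the complementary region by continuity), and since $\mu>p$ the left side is $\le c+1+\|\psi_n\|$ for large $n$; hence $(\psi_n)$ is bounded. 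Passing to a subsequence, $\psi_n\rightharpoonup\psi$ in $H^{1,p}$ and, by the compact embedding $H^{1,p}\hookrightarrow\hookrightarrow L^q$ with $q<p^*$, $\psi_n\to\psi$ in $L^q$. Then $\langle\mathfrak{L}'(\psi_n)-\mathfrak{L}'(\psi),\psi_n-\psi\rangle\to0$, while the nonlinear contribution tends to $0$ by the $L^q$ convergence and $(H1)$, leaving
\begin{equation*}
\int_M\big\langle|D\psi_n|^{p-2}D\psi_n-|D\psi|^{p-2}D\psi,\,D\psi_n-D\psi\big\rangle\,dx\to0 .
\end{equation*}
The strict monotonicity of $\xi\mapsto|\xi|^{p-2}\xi$ then forces $D\psi_n\to D\psi$ in $L^p$, i.e. $\psi_n\to\psi$ in $H^{1,p}$, establishing $(PS)_c$.

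With the two geometric conditions and $(PS)_c$ in hand, the mountain pass theorem provides a critical point at level $c\ge\alpha>0$, a nontrivial weak solution of (\ref{eq1.1}). I expect the main obstacle to be the compactness analysis in the Palais--Smale step: one must justify the $L^p$-elliptic estimate that makes $\|D\psi\|_{L^p}$ an equivalent norm (using the invertibility of $D$), and must upgrade the monotonicity inequality for $\xi\mapsto|\xi|^{p-2}\xi$ to genuine strong convergence $D\psi_n\to D\psi$ — the spinorial analogue of the $(S_+)$ property of the $p$-Laplacian, where the Hermitian structure of $\mathbb{S}(M)$ and the Clifford-multiplication integration by parts require care. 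The mountain pass geometry and the boundedness of $(PS)$ sequences, by contrast, are routine consequences of $(H1)$--$(H3)$.
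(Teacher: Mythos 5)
Your proposal is correct and follows essentially the same route as the paper: mountain pass geometry from $(H1)$/$(H3)$ (a small sphere on which $\mathfrak{L}\ge\alpha>0$, via $H(x,\psi)\le\varepsilon|\psi|^p+C_\varepsilon|\psi|^q$) and from $(H2)$ (a ray $t\psi_0$ along which $\mathfrak{L}\to-\infty$, via $H\ge c_1|\psi|^\mu-c_2$), boundedness of Palais--Smale sequences by the Ambrosetti--Rabinowitz combination $\mathfrak{L}(\psi_n)-\frac{1}{\mu}\langle \mathrm{d}\mathfrak{L}(\psi_n),\psi_n\rangle$, the compact embedding $H^{1,p}\hookrightarrow L^q$ to kill the nonlinear term, and finally the Mountain Pass Theorem; this is exactly the paper's Lemmas \ref{lemma3.3}, \ref{lemma3.5}, \ref{lemma3.6} and Theorem \ref{th3.4}, together with its Lemma \ref{lem:2.2} for the equivalence $\|\psi\|_{1,p}=\|D\psi\|_p$ under invertibility of $D$.

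The one place you genuinely diverge is the final step of the Palais--Smale verification, and there your version is the sound one. The paper passes from $\langle \mathrm{d}\mathfrak{L}(\psi_n),\psi_n-\psi\rangle=o(1)$ and \eqref{eq3.23} directly to \eqref{eq3.24}, namely $\|\psi_n-\psi\|_{1,p}^p=\int_M\langle\psi_n-\psi,D_p(\psi_n-\psi)\rangle dx=o(1)$, which tacitly uses $D_p\psi_n-D_p\psi=D_p(\psi_n-\psi)$; since $D_p$ is nonlinear, that identity holds only for $p=2$. What actually follows from \eqref{eq3.22}--\eqref{eq3.23} (after subtracting the term with the fixed functional $D_p\psi$, which vanishes by weak convergence) is precisely the quantity you isolate, $\int_M\bigl\langle|D\psi_n|^{p-2}D\psi_n-|D\psi|^{p-2}D\psi,\,D\psi_n-D\psi\bigr\rangle dx\to0$, and one then needs your monotonicity/$(S_+)$ argument to upgrade this to $D\psi_n\to D\psi$ in $L^p$ --- equivalently, one could invoke the paper's own Lemma \ref{lemma2.8} to get $\|\psi_n\|_{1,p}\to\|\psi\|_{1,p}$ and conclude via the locally uniformly convex renorming used in Lemma \ref{lemma2.9}. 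One small caveat in your write-up: for $1<p<2$ the pointwise monotonicity inequality degenerates (one only has $\langle|a|^{p-2}a-|b|^{p-2}b,a-b\rangle\ge c\,|a-b|^2(|a|+|b|)^{p-2}$), so strong $L^p$ convergence requires an additional H\"older step using the boundedness of $(D\psi_n)$ in $L^p$; for $p\ge2$ it is immediate. With that detail supplied, your proof matches the paper's strategy and in fact repairs the one step the paper states incorrectly.
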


Notice that $H_{\psi}(x,0)=0$ by (H4) and $\mathfrak{L}(0)=0$. Thus \eqref{eq1.1} has the trivial
solution $\psi\equiv 0$ under the assumption of Theorem \ref{th1.1}. Theorem \ref{th1.1} guaranties
the existence of a non-trivial solution.

For odd nonlinearities, we have the following multiplicity results:

\begin{theorem}\label{th1.2}
Assume $H$ satisfies $(H1), (H2), (H3), (H4)$.
Then there exists a sequence of weak solutions $\left\{\psi_{k}\right\}_{k=1}^{\infty}\subset H^{1,p}(M,\mathbb{S}(M))$ to \eqref{eq1.1}
 with $\mathfrak{L}(\psi_{k})\to\infty$ as $k\to\infty$.
\end{theorem}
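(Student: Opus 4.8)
The plan is to treat \eqref{eq1.1} as an even $C^1$ variational problem and apply a Fountain-type theorem on the separable reflexive Banach space $X:=H^{1,p}(M,\mathbb{S}(M))$. First I would rewrite the leading term: since $D$ is self-adjoint, integration by parts gives
\begin{equation*}
\frac{1}{p}\int_M\langle D_p\psi,\psi\rangle\,dx=\frac{1}{p}\int_M\langle|D\psi|^{p-2}D\psi,D\psi\rangle\,dx=\frac{1}{p}\int_M|D\psi|^p\,dx,
\end{equation*}
so that $\mathfrak{L}(\psi)=\frac{1}{p}\|D\psi\|_{L^p}^p-\int_M H(x,\psi)\,dx$. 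Because $D$ is assumed invertible and elliptic, $\|D\cdot\|_{L^p}$ is an equivalent norm on $X$, and $(H4)$ makes $\mathfrak{L}$ even with $\mathfrak{L}(0)=0$. Under $(H1)$ the Nemytskii term is $C^1$, so $\mathfrak{L}\in C^1(X,\mathbb{R})$ and its critical points are exactly the weak solutions of \eqref{eq1.1}.

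Next I would set up the Fountain geometry. Using the biorthogonal system on $X$ from Lindenstrauss--Tzafriri I would fix finite-dimensional subspaces $X_j$ with $X=\overline{\bigoplus_{j\ge1}X_j}$ and put $Y_k=\bigoplus_{j=1}^kX_j$, $Z_k=\overline{\bigoplus_{j\ge k}X_j}$. The decay quantity $\beta_k:=\sup_{u\in Z_k,\,\|u\|=1}\|u\|_{L^q}$ satisfies $\beta_k\to0$: any sequence $u_k\in Z_k$ with $\|u_k\|=1$ converges weakly to $0$, because the biorthogonal functionals $e_j^*$ are total and $e_j^*(u_k)=0$ for $j<k$, and hence $u_k\to0$ in $L^q$ by the compact Rellich--Kondrachov embedding $X\hookrightarrow L^q$ valid for $q<p^*$. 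Combining $(H3)$ with $(H1)$ yields the bound $H(x,\psi)\le\varepsilon|\psi|^p+C_\varepsilon|\psi|^q$; on $Z_k$ this gives $\mathfrak{L}(u)\ge(\frac{1}{p}-C\varepsilon)\|u\|^p-C_\varepsilon\beta_k^q\|u\|^q$, and choosing $\rho_k=(Cq\beta_k^q)^{1/(p-q)}\to\infty$ makes $\inf_{u\in Z_k,\,\|u\|=\rho_k}\mathfrak{L}(u)\to\infty$. On the finite-dimensional $Y_k$ all norms are equivalent, and $(H2)$ integrates to $H(x,\psi)\ge c_1|\psi|^\mu-c_2$ with $\mu>p$, so $\mathfrak{L}(u)\to-\infty$ as $\|u\|\to\infty$ on $Y_k$, yielding $r_k>\rho_k$ with $\max_{u\in Y_k,\,\|u\|=r_k}\mathfrak{L}(u)\le0$.

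It remains to verify the Palais--Smale condition at every level $c$. Given $\mathfrak{L}(u_n)\to c$ and $\mathfrak{L}'(u_n)\to0$, the Ambrosetti--Rabinowitz inequality $(H2)$ controls the combination $\mathfrak{L}(u_n)-\frac{1}{\mu}\langle\mathfrak{L}'(u_n),u_n\rangle\ge(\frac{1}{p}-\frac{1}{\mu})\|u_n\|^p-C$, so $(u_n)$ is bounded; passing to a subsequence, $u_n\rightharpoonup u$ with $u_n\to u$ in $L^q$ by compactness. To upgrade this to strong convergence in $X$ I would use the $(S_+)$-property of the $p$-Dirac principal part $u\mapsto D(|Du|^{p-2}Du)$: from $\langle\mathfrak{L}'(u_n),u_n-u\rangle\to0$ and $\int_M\langle H_\psi(x,u_n),u_n-u\rangle\,dx\to0$ (using $(H1)$ and $L^q$-convergence), the term $\int_M|Du_n|^{p-2}\langle Du_n,D(u_n-u)\rangle\,dx\to0$, which together with weak convergence forces $Du_n\to Du$ in $L^p$, hence $u_n\to u$ in $X$.

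With $\mathfrak{L}$ even, $C^1$, satisfying (PS) and the above linking geometry, the Fountain Theorem produces an unbounded sequence of critical values, i.e.\ weak solutions $\psi_k$ of \eqref{eq1.1} with $\mathfrak{L}(\psi_k)\to\infty$. The step I expect to be the main obstacle is precisely this $(S_+)$-type monotonicity estimate for the nonlinear, non-quadratic $p$-Dirac operator: it is analogous to but more delicate than the $p$-Laplacian case, since $D$ is only first order with indefinite symbol, so one must exploit the monotonicity of $\xi\mapsto|\xi|^{p-2}\xi$ and uniform convexity of $L^p$ while checking that the biorthogonal decomposition interacts correctly with $D$ so that $\beta_k\to0$ genuinely holds.
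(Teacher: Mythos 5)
Your proposal is correct and follows essentially the same route as the paper: the Fountain Theorem applied with a biorthogonal-system decomposition $Y_k\oplus Z_k$ of $H^{1,p}(M,\mathbb{S}(M))$ (your $\beta_k\to 0$ is exactly the paper's Lemma~\ref{lemma3.12} with $\beta_k=\tau_k^{-1}$, proved by the same totality-plus-compact-embedding argument), the same $(H1)$--$(H3)$ estimate on $Z_k$ with the same choice of radius, the same finite-dimensional $(H2)$ estimate on $Y_k$, and the same Ambrosetti--Rabinowitz boundedness for Palais--Smale sequences. If anything, your explicit $(S_+)$/monotonicity step (subtracting $\int_M|D\psi|^{p-2}\langle D\psi,D(\psi_n-\psi)\rangle\,dx\to 0$, which follows from weak convergence, before invoking the vector inequalities for $\xi\mapsto|\xi|^{p-2}\xi$) is a more careful justification of the paper's equation \eqref{eq3.24}, which is stated there as a bare equality without this intermediate argument.
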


On the other hand, we consider the p-sublinear case. the following conditions on $H(x,\psi)$ are used:
\begin{enumerate}
    \item[$(Hi)$] For any $(x,\psi)\in \mathbb{S}(M)$, there exist $1<q<p$ and $C_{2}>0$ such that
    \begin{equation}
        |H_{\psi}(x,\psi)|\le C_{2}\left(1+|\psi|^{q-1}\right).
        \nonumber
    \end{equation}

    \item[$(Hii)$] There exist $1<\nu<p$ and $C_3>0$ such that
    \begin{equation}
        C_3|\psi|^\nu \leq H(x,\psi)
        \nonumber
    \end{equation}
    for any $(x,\psi)\in\mathbb{S}(M)$.
\end{enumerate}

\begin{theorem}\label{th1.3}
Assume $H$ satisfies $(Hi)$.
Then $c:=\inf\{\mathfrak{L}(\psi):\psi\in H^{1,p}(M,\mathbb{S}(M))\}$ is a critical value of $\mathfrak{L}$.
\end{theorem}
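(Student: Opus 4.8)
The plan is to apply the direct method of the calculus of variations: I will show that $\mathfrak{L}$ is coercive and sequentially weakly lower semicontinuous on the reflexive separable Banach space $H^{1,p}(M,\mathbb{S}(M))$, so that the infimum $c$ is attained at some $\psi_0$, and then observe that a global minimizer of a $C^1$ functional is automatically a critical point, whence $c=\mathfrak{L}(\psi_0)$ is a critical value. The first step is to simplify the leading term. Since $M$ is compact without boundary and $D$ is self-adjoint, integrating by parts in $\frac{1}{p}\int_M\langle D_p\psi,\psi\rangle\,dx$ with $D_p\psi=D(|D\psi|^{p-2}D\psi)$ gives
\[
\frac{1}{p}\int_M\langle D_p\psi,\psi\rangle\,dx=\frac{1}{p}\int_M |D\psi|^{p-2}\langle D\psi,D\psi\rangle\,dx=\frac{1}{p}\|D\psi\|_{L^p}^p\ge 0,
\]
so the functional is \emph{not} indefinite in this sublinear regime. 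Because $D$ is assumed invertible, elliptic estimates for the first-order operator $D$ show that $\psi\mapsto\|D\psi\|_{L^p}$ is a norm equivalent to the full $H^{1,p}$-norm, which I denote $\|\psi\|$; thus $\mathfrak{L}(\psi)=\frac{1}{p}\|\psi\|^p-\int_M H(x,\psi)\,dx$ up to the equivalence of norms.

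Next I would extract a growth bound on the potential. Writing $H(x,\psi)=H(x,0)+\int_0^1\langle H_\psi(x,t\psi),\psi\rangle\,dt$ and using $(Hi)$ together with compactness of $M$ (so $H(x,0)$ is bounded), I obtain $|H(x,\psi)|\le C(1+|\psi|^q)$ with $1<q<p$. Since $q<p<p^*$, the embedding $H^{1,p}(M,\mathbb{S}(M))\hookrightarrow L^q(M,\mathbb{S}(M))$ is compact, so $\int_M H(x,\psi)\,dx\le C(1+\|\psi\|^q)$. Coercivity then follows from $q<p$:
\[
\mathfrak{L}(\psi)\ge\frac{1}{p}\|\psi\|^p-C\|\psi\|^q-C\longrightarrow+\infty\qquad\text{as }\|\psi\|\to\infty,
\]
so $\mathfrak{L}$ is bounded below and $c$ is finite.

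For weak lower semicontinuity I would treat the two terms separately. The map $\psi\mapsto\frac{1}{p}\|D\psi\|_{L^p}^p$ is convex (being the composition of the linear operator $D$, the fiber norm, and $t\mapsto t^p/p$, followed by integration) and norm-continuous, hence sequentially weakly lower semicontinuous. For the potential term I use the compact embedding: if $\psi_n\rightharpoonup\psi$ in $H^{1,p}$ then $\psi_n\to\psi$ in $L^q$, and the growth bound makes the Nemytskii functional $\psi\mapsto\int_M H(x,\psi)\,dx$ continuous on $L^q$, hence weakly continuous on $H^{1,p}$. Combining, $\mathfrak{L}$ is sequentially weakly lower semicontinuous. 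Now take a minimizing sequence $\{\psi_n\}$; coercivity bounds it in $H^{1,p}$, reflexivity yields a subsequence with $\psi_n\rightharpoonup\psi_0$, and lower semicontinuity gives $\mathfrak{L}(\psi_0)\le\liminf_n\mathfrak{L}(\psi_n)=c$, so $\mathfrak{L}(\psi_0)=c$. Since $(Hi)$ makes $\mathfrak{L}$ of class $C^1$, the minimizer satisfies $\mathfrak{L}'(\psi_0)=0$, and thus $c$ is a critical value.

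The step I expect to be the main obstacle is the weak-continuity analysis of the potential term, and underlying it, making rigorous both the reduction $\frac{1}{p}\int_M\langle D_p\psi,\psi\rangle\,dx=\frac{1}{p}\|D\psi\|_{L^p}^p$ and the equivalence of $\|D\psi\|_{L^p}$ with the $H^{1,p}$-norm via invertibility of $D$. Once the compact embedding $H^{1,p}\hookrightarrow L^q$ and the convexity of the leading term are established, coercivity and the direct method are routine.
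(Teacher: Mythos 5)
Your proposal is correct, but it follows a genuinely different route from the paper. The paper proves Theorem \ref{th1.3} by combining two ingredients: the coercivity estimate \eqref{eq4.36} (the same bound $\mathfrak{L}(\psi)\ge\frac{1}{p}\|\psi\|_{1,p}^{p}-C\|\psi\|_{1,p}^{q}-C$ you derive), which shows $\mathfrak{L}$ is bounded below, and the Palais--Smale condition (Lemma \ref{lemma4.2}), after which Theorem \ref{th4.1} (bounded below $+$ (PS) $\Rightarrow$ the infimum is a critical value, an Ekeland-type minimization principle) finishes the proof. You instead run the direct method: coercivity, sequential weak lower semicontinuity of the leading term $\frac{1}{p}\|D\psi\|_{p}^{p}$ via convexity and norm continuity, weak continuity of $\psi\mapsto\int_M H(x,\psi)\,dx$ via the compact embedding $H^{1,p}\hookrightarrow L^{q}$ (valid since $q<p<p^{*}$) and the Nemytskii continuity supplied by the growth bound $|H(x,\psi)|\le C(1+|\psi|^{q})$, and then the standard fact that a global minimizer of a $C^{1}$ functional is a critical point (the $C^{1}$ regularity being the paper's Proposition \ref{prop:A.1.}, applicable since $1<q<p^{*}$). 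What your route buys: it bypasses the (PS) verification entirely, which is the most delicate part of the paper's argument --- in particular the step \eqref{eq4.41}, where the paper identifies $\int_M\langle\psi_n-\psi,D_p(\psi_n-\psi)\rangle dx$ with quantities controlled by \eqref{eq4.39} and \eqref{eq4.40}, is nontrivial for the nonlinear operator $D_p$ (for $p\ne2$ one cannot split $D_p(\psi_n-\psi)$ linearly, and a monotonicity/$(S_+)$-type inequality as in Lemma \ref{lemma2.8} is really needed there); your convexity argument avoids this issue completely and is self-contained. What the paper's route buys: the (PS) condition of Lemma \ref{lemma4.2} is needed anyway for the multiplicity result of Theorem \ref{th1.4}, so the paper proves it once and reuses it, whereas your argument, while cleaner for Theorem \ref{th1.3} alone, would not by itself support the subsequent minimax machinery.
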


For odd nonlinearities, we have the following multiplicity results:
\begin{theorem}\label{th1.4}
Assume $H$ satisfies $(Hi), (Hii), (H4)$.
Then there exists a sequence of weak solutions $\left\{\psi_{k}\right\}_{k=1}^{\infty}\subset H^{1,p}(M,\mathbb{S}(M))$ to \eqref{eq1.1}
 with $\mathfrak{L}(\psi_{k})<0$ for all $k$ and $\mathfrak{L}(\psi_{k})\to 0$ as $k\to\infty$.
\end{theorem}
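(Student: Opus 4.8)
The plan is to recognise Theorem~\ref{th1.4} as an instance of the symmetric critical point theorem for \emph{even functionals that are bounded below}, namely Clark's theorem in the genus-based refinement that produces critical values tending to $0^-$. Since $D$ is invertible, integration by parts together with the self-adjointness of $D$ gives $\int_M\langle D_p\psi,\psi\rangle\,dx=\int_M|D\psi|^p\,dx$, so that
$$\mathfrak{L}(\psi)=\frac{1}{p}\int_M|D\psi|^p\,dx-\int_M H(x,\psi)\,dx,$$
and by the elliptic estimate $\psi\mapsto\left(\int_M|D\psi|^p\,dx\right)^{1/p}$ is an equivalent norm on $H^{1,p}(M,\mathbb{S}(M))$. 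The functional is even by $(H4)$ and satisfies $\mathfrak{L}(0)=0$.

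First I would show that $\mathfrak{L}$ is coercive and bounded below. Integrating the sublinear growth bound $(Hi)$ yields $|H(x,\psi)|\le C(1+|\psi|^q)$ with $1<q<p$; combined with the Sobolev embedding $H^{1,p}(M,\mathbb{S}(M))\hookrightarrow L^q(M,\mathbb{S}(M))$, which is valid since $q<p$ is subcritical, and the norm equivalence above, one obtains
$$\mathfrak{L}(\psi)\ge\frac{1}{p}\|D\psi\|_{L^p}^p-C\|D\psi\|_{L^p}^q-C.$$
Because $q<p$, the right-hand side tends to $+\infty$ as $\|D\psi\|_{L^p}\to\infty$, so $\mathfrak{L}$ is coercive and bounded below.

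Next I would verify the Palais--Smale condition. Coercivity forces any Palais--Smale sequence $\{\psi_n\}$ to be bounded, so up to a subsequence $\psi_n\rightharpoonup\psi$ in $H^{1,p}$ and $\psi_n\to\psi$ in $L^q$ by the compact embedding. Writing $\mathfrak{L}'(\psi_n)=A\psi_n-B(\psi_n)$, where $A$ is the derivative of $\frac1p\int_M|D\psi|^p\,dx$ and $B(\psi)=\int_M H_\psi(x,\psi)\cdot$, the sublinear growth $(Hi)$ together with the compactness of the embedding makes $B$ compact, so $B(\psi_n)\to B(\psi)$; since $\mathfrak{L}'(\psi_n)\to 0$, we get $\langle A\psi_n,\psi_n-\psi\rangle\to 0$. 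To conclude $\psi_n\to\psi$ strongly I would establish that the $p$-Dirac operator enjoys the $(S)_+$ property, the exact analogue of the classical $(S)_+$ property of the $p$-Laplacian, proved from the monotonicity inequalities for $\xi\mapsto|\xi|^{p-2}\xi$ and the invertibility of $D$; this is the main technical obstacle of the argument.

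Finally I would construct, for each $k$, a symmetric compact set of Krasnoselskii genus at least $k$ on which $\mathfrak{L}$ is strictly negative, and then invoke the symmetric critical point theorem. Using the separability of $H^{1,p}(M,\mathbb{S}(M))$ and the biorthogonal system referred to in the introduction, fix an increasing chain of finite-dimensional subspaces $V_1\subset V_2\subset\cdots$ with $\dim V_k=k$. On the finite-dimensional space $V_k$ all norms are equivalent, so the lower bound $(Hii)$, $H(x,\psi)\ge C_3|\psi|^\nu$ with $1<\nu<p$, gives for $\psi\in V_k$ with $\|\psi\|=\rho$ small
$$\mathfrak{L}(\psi)\le C\rho^p-C_3'\rho^\nu<0,$$
since $\nu<p$. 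Hence the sphere of radius $\rho$ in $V_k$ is a symmetric compact set of genus $k$ on which $\sup\mathfrak{L}<0$. Setting $c_k=\inf_{\gamma(A)\ge k}\sup_{\psi\in A}\mathfrak{L}(\psi)$, the genus-based deformation argument produces critical points $\psi_k$ with $c_k\le c_{k+1}<0$ and $c_k\to 0^-$; each $\psi_k$ is a weak solution of \eqref{eq1.1}, giving the desired sequence with $\mathfrak{L}(\psi_k)<0$ for all $k$ and $\mathfrak{L}(\psi_k)\to 0$. The delicate points are the $(S)_+$ verification in the third step and checking that the genus machinery, usually stated in Hilbert spaces, transfers to the separable Banach space $H^{1,p}$ through the chosen biorthogonal system.
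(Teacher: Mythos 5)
Your proposal is correct in substance but reaches the conclusion by a genuinely different route than the paper. The first two steps coincide with the paper's: your coercivity/boundedness-below argument is exactly Lemma \ref{lemma4.2}'s opening estimate, your negative spheres in $V_k$ reproduce Lemma \ref{lemma4.4} (finite-dimensional norm equivalence plus $(Hii)$ with $1<\nu<p$), and your $(S)_+$ plan for the Palais--Smale condition is the same substance as the paper's combination of the monotonicity inequality of Lemma \ref{lemma2.8} with the Troyanski renorming of Lemma \ref{lemma2.9}; indeed the paper's passage from $\langle\mathrm{d}\mathfrak{L}(\psi_n),\psi_n-\psi\rangle\to 0$ to $\|\psi_n-\psi\|_{1,p}\to 0$ tacitly needs precisely the cross-term/monotonicity argument you describe (the fixed term $\int_M|D\psi|^{p-2}\langle D\psi,D(\psi_n-\psi)\rangle dx\to 0$ by weak convergence, then monotonicity of $\xi\mapsto|\xi|^{p-2}\xi$), so you are right to flag it as the technical heart. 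Where you diverge is in how $\mathfrak{L}(\psi_k)\to 0$ is obtained. The paper proves it quantitatively: its Theorem \ref{th4.3} is a dual-fountain-type minimax on the biorthogonal decomposition $H_k\oplus H_k^\perp$, and the critical values are pinched $b(k)\le c_k\le a(k)<0$, where the lower bounds $b(k)\to 0^-$ on $\mathbb{R}e_k\oplus Z_{k+1}$ come from the embedding constants $\tau_k\to\infty$ of Lemma \ref{lemma3.12}, itself proved via Mazur's theorem and the totality of the biorthogonal functionals (this is the content of Lemma \ref{lemma4.5}). You instead invoke the genus-based Clark theorem with $c_k=\inf_{\gamma(A)\ge k}\sup_A\mathfrak{L}$, getting a nondecreasing sequence of negative critical values and deducing $c_k\to 0^-$ abstractly: if $c_k\to\bar c<0$, then by (PS) the critical set at level $\bar c$ is compact, hence of finite genus, and a deformation yields a contradiction. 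Your route buys economy --- you need neither Lemma \ref{lemma3.12} nor any tail-space estimate --- while the paper's route buys explicit level estimates and avoids the refined form of Clark's theorem. Two caveats on your version: the convergence $c_k\to 0$ is a genuine refinement (of Heinz/Kajikiya type) that must be proved or cited, not a formal consequence of the minimax definition; and your closing worry is unfounded in the opposite direction --- Krasnoselskii genus and the deformation lemma via pseudo-gradient vector fields work verbatim in any Banach space, so no transfer through the biorthogonal system is needed there; the paper uses that system only to define the subspace decomposition and the constants $\tau_k$, not to make the symmetric critical point machinery function.
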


\section{Preliminaries}
\subsection{Spin structures and Dirac operators}

We collect here basic definitions and facts about spin structures on manifolds and
Dirac operators. Since our purpose is only to introduce notation which are used
throughout this paper, we do not enter this subject in detail. For more detailed
exposition, please consult \cite{LaM,Fri}.

Let $M$ be an $m-$dimensional oriented Riemannian manifold. We henceforth
assume that $m\ge2$. Since $M$ is orientable, the tangent bundle $TM$ admit an $SO(m)-$ structure:
it can be defined by an open covering $\left\{U_{\alpha}\right\}$ of M and transition
maps $g_{\alpha \beta}: U_{\alpha\beta}:=U_{\alpha}\cap U_{\beta}\to SO(m)$ satisfying $g_{\alpha\alpha}=1$ and the cocycle
condition: $g_{\alpha\beta}\cdot g_{\beta\gamma}\cdot g_{\gamma\alpha}=1$ in $U_{\alpha\beta\gamma}:=U_{\alpha}\cap U_{\beta}\cap U_{\gamma}$,
where $1\in SO(m)$ is the identity. Recall that $SO(m)$
is not simply connected (indeed, $\pi_{1}(SO(2))=\mathbb{Z}$ and $\pi _{1}(SO(m))=\mathbb{Z}_{2}$ for $m\ge 3$. )
Thus there exists the universal covering $ \rho: Spin(m)\to SO(m)$ for the case $m\ge3$
and the double covering $\rho : Spin(2)\cong S^{1}\ni z\mapsto z^{2}\in S^{1} \cong SO(2)$ for the case $m=2$. The manifold $M$ is said to
possess a spin structure if there exist smooth maps $\tilde{g}_{\alpha\beta}:U_{\alpha\beta}\to Spin(m)$ satisfying $\tilde{g}_{\alpha\alpha}=1$
and the cocycle condition $\tilde{g}_{\alpha\beta}\cdot \tilde{g}_{\beta\gamma}\cdot \tilde{g}_{\gamma\alpha}=1$
in $U_{\alpha\beta\gamma}$ (here $1$ also denotes the identity element of $Spin(m)$). We will not distinguish it from the
identity of $SO(m)$ and $ \rho(\tilde{g}_{\alpha\beta})=g_{\alpha\beta}$ for all $\alpha$, $\beta$. A pair of manifold and its spin
structure is called a spin manifold. There is a topological obstruction for the existence of a spin structure, namely, the vanishing of the second Stiefel-Whitney class
$w_{2}(M)\in H^{2}(M;\mathbb{Z}_{2})$. Moreover, there may be many different spin structures on
the same manifold.

For a spin manifold $M$, $\left\{\tilde{g}_{\alpha\beta}\right\}$ define a principal $Spin(m)-$ bundle which we
denote by $P_{Spin}(M)$.  It is a double covering of the oriented frame bundle $P_{SO}(M)$ whose restriction to each fiber is $\rho: Spin(m)\to SO(m)$.
We can regard $P_{SO}(M)$ as a bundle associated to $ P_{Spin}(M)$ via $\rho: Spin(m)\to SO(m)$.

In order to introduce the spinor bundle, we first assume that $m$ is even. Recall
that the Clifford algebra $\text{Cl}_{m}$ is the associative $\mathbb{R}-$algebra with unit, generated by $\mathbb{R}^{m}$
subject to the relations $uv+vu=-2(u,v)$ for $u,v\in\mathbb{R}^{m}$ ( $(u,v)$ is the Euclidean
inner product of $u$ and $v$ in $\mathbb{R}^{m}$ ). $Spin(m)$ is the group generated multiplicatively
by even number of unit vectors in $\mathbb{R}^{m}$. There exists a complex $Cl_{m}-$module $\mathbb{S}_{m}$
such that $\mathbb{C}l_{m}:=\text{Cl}_{m}\otimes \mathbb{C}\cong \text{End}_{\mathbb{C}(\mathbb{S}_{m})}$ as $\mathbb{C}-$algebras. $\mathbb{S}_{m}$ is the unique (up to
isomorphism) irreducible complex $\text{Cl}_{m}-$module, usually called the spinor module.
The isomorphism $\mathbb{C}l_{m}\cong \text{End}_{\mathbb{C}}(\mathbb{S}_{m})$ induces the representation (unique up to isomorphism) $\sigma: Spin(m)\to\text{End}(\mathbb{S}_{m})$,
the spinor representation. On the other hand,the orientation on $\mathbb{R}_{m}$ induces a $\mathbb{Z}_{2}-$grading on $\mathbb{S}_{m}$;
$\mathbb{S}_{m}=\mathbb{S}_{m}^{+}\oplus \mathbb{S}_{m}^{-}$. Since $Spin(m)\subset\text{Cl}_{m}^{\text{even}}$
(the subalgebra of $\text{Cl}_{m}$ generated by themultiples of even number of vectors in $\mathbb{R}^{m}$),
each of the spinor spaces $\mathbb{S}_{m}^{\pm}$ is a representation space for $Spin(m)$.
They are in fact irreducible, non-isomorphic complex
$Spin(m)-$modules. They are called positive or negative complex spin representations
and we denote them as $\sigma^{\pm}:Spin(m)\to\text{End}(\mathbb{S}_{m}^{\pm})$. sociated to these, we obtain
Hermitian vector bundles:
\begin{equation}
    \begin{aligned}
        \mathbb{S}(M)&:= P_{SPin}(M)\times _{\sigma}\mathbb{S}_{m},
        \\
        \mathbb{S}^{\pm}(M)&:=P_{Spin}(M)\times_{\sigma^{\pm}}\mathbb{S}_{m}^{\pm}.
    \end{aligned}
    \nonumber
\end{equation}
These are called complex (positive/negative) spinor bundles.

For the odd $m$ case, we first observe that there is an isomorphism $\text{Cl}_{m}\cong\text{C}l_{m+1}^{\text{even}}$ defined by $x^{0}+x^{1}\mapsto x^{0}+e_{0}\cdot x^{1}$,
where $x^{0}\in \text{C}l_{m}^{even}$, $x^{1}\in \text{C}l_{m}^{odd}$ and $\left\{e_{0},e_{1},\cdots, e_{m}\right\}$
and $\left\{e_{1},\cdots, e_{m}\right\}$ are orthonormal bases of $\mathbb{R}^{m+1}$ and $\mathbb{R}^{m}$, respectively.
(Thus we shall regard $\mathbb{R}^{m}$ as a subspace of $\mathbb{R}^{m+1}$) We then have the isomorphism
\begin{equation}
    \mathbb{C}l_{m}\cong \mathbb{C}l_{m+1}^{\text{even}}\cong\text{End}^{\text{even}}(\mathbb{S}_{m+1})\cong\text{End}(\mathbb{S}_{m+1}^{+})\oplus \text{End}(\mathbb{S}_{m+1}^{-}).
    \nonumber
\end{equation}
Thus both of $\mathbb{S}_{m+1}^{\pm}$ are representation spaces of $\mathbb{C}l_{m}$. In fact, they are both irreducible and also become representation spaces of $Spin(m)$.
It is known that they are irreducible but isomorphic as $Spin(m)-$modules. We denote $\mathbb{S}_{m}\cong\mathbb{S}_{m+1}^{+}$ and call it as the complex spinor representation.
Denoting by $\sigma: Spin(m)\to \mathbb{S}_{m}$ the representation so obtained, as in the even case, we form the spinor bundle $\mathbb{S}(M)$ as
\begin{equation}
    \mathbb{S}(M):=P_{Spin}(M)\times _{\sigma}\mathbb{S}_{m}.
    \nonumber
\end{equation}
We denote by $\left\langle\cdot,\cdot\right\rangle$ the real scalar product on $\mathbb{S}(M)$ ($i.e.$, the real part of the Hermitian product on $\mathbb{S}(M)$).
The sections of the spinor bundle $\mathbb{S}(M)$ are simply called spinors on $M$.

To define the (Atiyah-Singer-)Dirac operator, we recall that on $\mathbb{S}(M)$ there is a
natural connection $\bigtriangledown :C^{\infty}(M,\mathbb{S}(M))\to C^{\infty}(M,T^{\ast}M\otimes\mathbb{S}(M))$ induced from the
Levi-Civita connection on $TM$. We also call it as the Levi-Civita connection on $\mathbb{S}(M)$.
The Dirac operator $D$ acts on spinors on $M$ , $D:C^{\infty}(M,\mathbb{S}(M))\to C^{\infty}(M,\mathbb{S}(M))$, and is defined by

\begin{equation}
    \begin{aligned}
        D&=c\circ \bigtriangledown:C^{\infty}(M,\mathbb{S}(M))\overset{\bigtriangledown}{\rightarrow} C^{\infty}(M,T^{\ast}M\otimes\mathbb{S}(M))
        \\
        &\cong C^{\infty}(M,TM\otimes\mathbb{S}(M))\overset{c}{\rightarrow} C^{\infty}(M,\mathbb{S}(M)),
    \end{aligned}
    \nonumber
\end{equation}
where $c$ denotes the Clifford multiplication $c:TM\otimes\mathbb{S}(M)\ni X\otimes\psi\mapsto X\cdot\psi\in\mathbb{S}(M)$
and we have used the identification $T^{\ast}M\cong TM$ by the metric on $M$.

For the even dimensional case, we have $D^{\pm}: C^{\infty}(M,\mathbb{S}^{\pm}(M))\to C^{\infty}(M,\\\mathbb{S}^{\mp}(M))$,
where $D^{\pm}$ is the restriction of $D$ to $C^{\infty}(M,\mathbb{S}^{\pm}(M))$ and
$D=\\\begin{pmatrix}
    0&D^{-} \\
    D^{+}&0
  \end{pmatrix}:C^{\infty }(M,\mathbb{S}^{+}(M))\oplus C^{\infty}(M,\mathbb{S}^{-}(M))\to C^{\infty}(M,\mathbb{S}^{+}(M))\oplus C^{\infty}(M,\\\mathbb{S}^{-}(M))$.

\subsection{Variational setting}
To treat the problem \eqref{eq1.1} from a variational point of view, it is necessary to give a suitable functional analytic framework. A suitable function space to work with the functional
$\mathfrak{L}$ is the Sobolev space $H ^{1,p}\left ( \mathrm {M} ,\mathbb{S} \left ( \mathrm {M} \right ) \right ) $ of $H ^{1,p}$-spinors which we now define.
According to \cite{Fri,LaM}, we next give the definition of Laplace operator on spinors:
\begin{definition}\label{def:2.1}
If $\psi\in C^\infty(M,\mathbb{S}(M))$, then $\Delta:C^\infty(M,\mathbb{S}(M))\rightarrow C^\infty(M,\mathbb{S}(M))$ is defined by
$$
\Delta\psi=-\sum_{i=1}^n(\nabla_{e_i}\nabla_{e_i}-\nabla_{\nabla_{e_i}e_i})\psi.
$$
\end{definition}
Using Stokes'theorem we have
$$
\int_M\langle \Delta\psi,\psi\rangle=\int_M\langle \nabla\psi,\nabla\psi\rangle=\int_M\langle\psi,\Delta\psi\rangle
$$
for any $\psi\in C^\infty(M,\mathbb{S}(M))$. Here, $\langle \nabla\psi,\nabla\psi\rangle$ is the scalar product on $1$-forms,i.e.
$\langle \nabla\psi,\nabla\psi\rangle=\sum\limits_{i=1}^n\langle \nabla_{e_i}\psi,\nabla_{e_i}\psi\rangle$, where $\{e_i\}_{i=1}^\infty$ is a local
orthonormal frame on the spin manifold.

The first Sobolev norm \cite{Fri} of a smooth spinor field $\psi\in C^\infty(M,\mathbb{S}(M))$ is given by
$$
\parallel \psi\parallel_{H^{1.p}}^p=\parallel\psi\parallel_p^p+\parallel\nabla\psi\parallel_p^p
$$
where
$\parallel\cdot\parallel_p^p:=\int_M\mid\psi\mid^pdx $.

The corresponding Sobolev space $H^{1.p}(M,\mathbb{S}(M))$ is defined as being the completion of $\psi\in C^\infty(M,\mathbb{S}(M))$ with respect to this norm. The elements of  the space $H^{1.p}(M,\mathbb{S}(M))$ are called $H ^{1,p}$-spinors.

However, since our problem involves the Dirac operator, it would be more convenient to solve the nonlinear equation studied in this paper if another Sobolev norm is given  by
$$
\parallel \psi\parallel_{1,p}^p=\parallel D\psi\parallel_p^p.
$$
Thus we will obtain the next Lemma:
\begin{lemma}\label{lem:2.2}
The norm $\parallel \psi\parallel_{H^{1,p}}$ and the norm $\parallel \psi\parallel_{1,p}$ are equivalent norms on $H^{1,p}(M,\mathbb{S}(M))$.
\end{lemma}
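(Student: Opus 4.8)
The plan is to establish the two-sided bound
\[
C_{1}\,\|\psi\|_{H^{1,p}}\le \|\psi\|_{1,p}\le C_{2}\,\|\psi\|_{H^{1,p}}
\]
for every $\psi\in C^{\infty}(M,\mathbb{S}(M))$ and then extend it to all of $H^{1,p}(M,\mathbb{S}(M))$ by density. One direction is elementary. Since $D\psi=\sum_{i=1}^{m}e_{i}\cdot\nabla_{e_{i}}\psi$ and Clifford multiplication by a unit tangent vector is a pointwise isometry of $\mathbb{S}(M)$, we have the pointwise bound $|D\psi|\le\sum_{i}|\nabla_{e_{i}}\psi|\le\sqrt{m}\,|\nabla\psi|$. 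Integrating the $p$-th power over $M$ gives $\|D\psi\|_{p}\le m^{1/2}\|\nabla\psi\|_{p}\le m^{1/2}\|\psi\|_{H^{1,p}}$, which is precisely the upper estimate $\|\psi\|_{1,p}\le C_{2}\|\psi\|_{H^{1,p}}$.

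The reverse inequality is the substantial part, as it requires controlling both $\|\psi\|_{p}$ and $\|\nabla\psi\|_{p}$ by $\|D\psi\|_{p}$ alone. Because $D$ is a first-order elliptic operator with smooth coefficients on the closed manifold $M$, the singular-integral (Calder\'on--Zygmund) theory of such operators yields a G\aa rding-type $L^{p}$-elliptic estimate
\[
\|\psi\|_{H^{1,p}}\le C\bigl(\|D\psi\|_{p}+\|\psi\|_{p}\bigr),
\]
which I would quote from the standard references on elliptic regularity on compact manifolds rather than reprove.

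It then remains to absorb the lower-order term $\|\psi\|_{p}$, and this is exactly where the standing hypothesis that $D$ be invertible is used. I would argue by contradiction and compactness: if no constant $C'$ satisfied $\|\psi\|_{p}\le C'\|D\psi\|_{p}$, there would exist a sequence $\psi_{n}$ with $\|\psi_{n}\|_{p}=1$ and $\|D\psi_{n}\|_{p}\to 0$. The elliptic estimate above bounds $\{\psi_{n}\}$ in $H^{1,p}$, so by reflexivity (recall $p>1$) a subsequence converges weakly in $H^{1,p}$, and by the Rellich--Kondrachov compact embedding $H^{1,p}(M,\mathbb{S}(M))\hookrightarrow L^{p}(M,\mathbb{S}(M))$ it converges strongly in $L^{p}$ to some $\psi_{0}$ with $\|\psi_{0}\|_{p}=1$. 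Passing to the limit, $D\psi_{0}=0$ weakly, hence $\psi_{0}\in\ker D=\{0\}$ by invertibility, contradicting $\|\psi_{0}\|_{p}=1$. Thus $\|\psi\|_{p}\le C'\|D\psi\|_{p}$, and substituting this back into the elliptic estimate gives $\|\psi\|_{H^{1,p}}\le C''\|D\psi\|_{p}=C''\|\psi\|_{1,p}$, the desired lower bound.

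I expect the main obstacle to be the $L^{p}$-elliptic estimate for $p\neq 2$. In the Hilbert case $p=2$ the Schr\"odinger--Lichnerowicz identity $D^{2}=\Delta+\tfrac{R}{4}\,\mathrm{Id}$, together with the integration-by-parts formula recorded after Definition~\ref{def:2.1}, relates $\|D\psi\|_{2}$ and $\|\nabla\psi\|_{2}$ directly; for general $p$ no such algebraic identity is available, and one must genuinely invoke the Calder\'on--Zygmund theory of the operator $D$. Once that estimate and the triviality of $\ker D$ are in hand, the absorption step and the final synthesis are routine.
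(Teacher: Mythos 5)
Your proof is correct, and on the elementary direction it coincides with the paper's: the paper expands $\|D\psi\|_p^p=\sum_{i,j}\int_M|\langle e_i\cdot\nabla_{e_i}\psi,e_j\cdot\nabla_{e_j}\psi\rangle|^{p/2}\,dx$ and estimates term by term to reach $\|D\psi\|_p^p\le 2^{p/2}n\,\|\nabla\psi\|_p^p$, which is just your pointwise bound $|D\psi|\le\sqrt{m}\,|\nabla\psi|$ integrated, with a worse constant. The genuine difference is in the hard direction: the paper does not prove $\|\psi\|_{H^{1,p}}\le C\|\psi\|_{1,p}$ at all, but simply cites \cite{RaSi} (pp.~1592--1593), whereas you supply the argument in full --- the $L^p$ G\aa rding-type estimate $\|\psi\|_{H^{1,p}}\le C(\|D\psi\|_p+\|\psi\|_p)$ from Calder\'on--Zygmund theory for the first-order elliptic operator $D$, followed by absorption of the lower-order term via a contradiction--compactness argument using reflexivity of $H^{1,p}$ ($p>1$), the Rellich--Kondrachov embedding, and the paper's standing hypothesis that $D$ is invertible. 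This is essentially the argument behind the inequality in the cited reference, so what your route buys is self-containedness, plus an explicit identification of where invertibility of $D$ is indispensable: if harmonic spinors existed, $\|\cdot\|_{1,p}$ would fail to be a norm, so the lemma would be false, a point the paper's proof leaves implicit inside the citation. Two details worth pinning down if you write this out: in the limit step, $D\psi_0=0$ should be obtained distributionally by pairing with smooth test spinors and integrating by parts (formal self-adjointness of $D$), and elliptic regularity then upgrades $\psi_0$ to a smooth element of $\ker D$, so that the invertibility hypothesis, stated for $D$ on smooth spinors, actually applies; and the two-sided estimate is proved for $\psi\in C^\infty(M,\mathbb{S}(M))$ and extended by density, which you correctly flag and which is also what makes the completion defining $H^{1,p}(M,\mathbb{S}(M))$ independent of which of the two norms is used.
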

proof: Firstly, from the estimate below for the norm of $D\psi$:
\begin{eqnarray*}
\parallel D\psi\parallel_p^p &=& \sum\limits_{i,j=1}^n\int_M\mid\langle e_i\cdot\nabla_{e_i}\psi,e_j\cdot\nabla_{e_j}\psi\rangle\mid^{\frac{p}{2}} dx\leq
\sum\limits_{i,j=1}^n\int_M(\mid\nabla_{e_i}\psi\mid\mid\nabla_{e_j}\psi\mid)^{\frac{p}{2}} dx \\
   &\leq& (\frac{1}{2})^{\frac{p}{2}}\sum\limits_{i,j=1}^n\int_M(\mid\nabla_{e_i}\psi\mid+\mid\nabla_{e_j}\psi\mid)^pdx\\
    &\leq& 2^{\frac{p}{2}-1}\sum\limits_{i,j=1}^n\int_M(\mid\nabla_{e_i}\psi\mid^p+\mid\nabla_{e_j}\psi\mid^p)dx\\
   &=&2^{\frac{p}{2}}n\parallel \nabla\psi\parallel_p^p\leq 2^{\frac{p}{2}}n\parallel\psi\parallel_{H^{1,p}}^p,
\end{eqnarray*}
which implies the existence of a positive constant $C$ such that
$$
\parallel \psi\parallel_{1,p}\leq C\parallel \psi\parallel_{H^{1.p}}.
$$

On the other hand, there also exists another positive constant $C$ such that
$$
\parallel \psi\parallel_{H^{1.p}}\leq C\parallel \psi\parallel_{1,p},
$$
see (\cite{RaSi},p1592-1593).
Thus $\parallel \psi\parallel_{H^1}$ and $\parallel \psi\parallel_{1,p}$ are equivalent norms.

In other words,
the Sobolev space $H^{1,p}(M,\mathbb{S}(M))$ can be defined as the completion of the $C^\infty(M,\mathbb{S}(M))$ with respect to the norm $\parallel \psi\parallel_{1,p}$. In the following, we will use the norm $\parallel \psi\parallel_{1,p}$ respect to the $H^{1,p}(M,\mathbb{S}(M))$.

By the Sobolev embedding theorem , we have the continuous embedding
$$H^{1,p}(M,\mathbb{S}(M))\subset L^{q}(M,\mathbb{S}(M))$$
for $1\le q\le\frac{mp}{m-p}$.
Moreover, it is compact if $1\le q<\frac{mp}{m-p}$. In view of this, in order to obtain a well-defined Lagrangian $\mathfrak{L}$ on $H^{1,p}(M,\mathbb{S}(M))$,
we need to assume the following growth codition on $H : H$ is continuous and
\begin{equation}\label{eq2.3}
    |H(x,\psi)|\le C\left(1+|\psi|^{q}\right)
\end{equation}
for any $(x,\psi)\in\mathbb{S}(M)$, where $1\le q \le\frac{mp}{m-p}$ and $C$ is a positive constant.

Under \eqref{eq2.3}, by the Sobolev embedding theorem, $\mathfrak{L}$ is well-defined and continuous on $H^{1,p}(M,\mathbb{S}(M))$.
However, in order to apply critical point theory for $\mathfrak{L}$, we need a further regularity of $\mathfrak{L}$. Under the condition as follow, by  Sobolev embedding theorem, it is easily checked that $\mathfrak{L}$ is a $C^1$ functional on $H^{1,p}(M,\mathbb{S}(M))$ and we obtain the following proposition:

\begin{proposition}
Assume that  $H\in C^0(M,\mathbb{S}(M))$ is $C^1$ in the fiber direction, i.e., $H_\psi(x,\psi)\in C^0(M,\mathbb{S}(M))$, and there exist $q\in (1,p^*)$ and $C>0$ such that
    \begin{equation}\label{eq2.4}
        |H_{\psi}(x,\psi)|\le C\left(1+|\psi|^{q-1}\right)
    \end{equation}
for any $(x,\psi)\in \mathbb{S}(M)$.
    Then functional  $\mathcal{H}:H^{1,p}(M,\mathbb{S}(M))\to \mathbb{R}$
 defined by
 \begin{equation} \label{eq2.5}
\mathcal{H}(\psi)=\int_MH(x,\psi)dx,
\end{equation}
 is of class $C^1$, and  at each $\psi\in H^{1,p}(M,\mathbb{S}(M))$, (Fr$\acute{e}$chet) derivations $\mathcal{H}^\prime(\psi)$
 is given by
\begin{equation} \label{eq2.6}
\mathcal{H}^\prime(\psi)\xi=\int_M\langle H_\psi(x,\psi),\xi\rangle dx\quad\forall \xi\in H^{1,p}(M,\mathbb{S}(M)).
\end{equation}
\end{proposition}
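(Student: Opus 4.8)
The plan is to show that the functional $\mathcal{H}(\psi)=\int_M H(x,\psi)\,dx$ is Gateaux differentiable with the claimed derivative, that the Gateaux derivative is continuous as a map $H^{1,p}(M,\mathbb{S}(M))\to (H^{1,p}(M,\mathbb{S}(M)))^*$, and then invoke the standard fact that a Gateaux differentiable functional with continuous Gateaux derivative is of class $C^1$ with Fr\'echet derivative equal to the Gateaux derivative. First I would fix $\psi,\xi\in H^{1,p}(M,\mathbb{S}(M))$ and for $t\in\mathbb{R}$ write the difference quotient
\begin{equation*}
\frac{\mathcal{H}(\psi+t\xi)-\mathcal{H}(\psi)}{t}=\int_M\frac{H(x,\psi+t\xi)-H(x,\psi)}{t}\,dx.
\end{equation*}
By the mean value theorem applied fiberwise, the integrand equals $\langle H_\psi(x,\psi+\theta(x,t)t\xi),\xi\rangle$ for some $\theta(x,t)\in(0,1)$, and as $t\to 0$ this converges pointwise to $\langle H_\psi(x,\psi),\xi\rangle$ by continuity of $H_\psi$.

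The key analytic step is to justify passing the limit inside the integral, for which I would use the dominated convergence theorem together with the growth condition \eqref{eq2.4}. The bound $|H_\psi(x,\psi+\theta t\xi)|\le C(1+|\psi+\theta t\xi|^{q-1})$ gives, for $|t|\le 1$, a dominating function controlled by $C(1+(|\psi|+|\xi|)^{q-1})|\xi|$. I would then verify this dominating function is integrable: since $q\in(1,p^*)$, H\"older's inequality with the exponents $q/(q-1)$ and $q$ shows $\int_M (|\psi|+|\xi|)^{q-1}|\xi|\,dx$ is finite because $|\psi|^{q-1}$ lies in $L^{q/(q-1)}$ and $|\xi|$ lies in $L^{q}$, both of which follow from the continuous Sobolev embedding $H^{1,p}\hookrightarrow L^q$ established in the excerpt. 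This yields Gateaux differentiability with $\mathcal{H}'(\psi)\xi=\int_M\langle H_\psi(x,\psi),\xi\rangle\,dx$, and a further H\"older estimate of the same kind shows $\xi\mapsto\mathcal{H}'(\psi)\xi$ is a bounded linear functional.

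It remains to prove continuity of $\psi\mapsto\mathcal{H}'(\psi)$ in the operator norm. Given $\psi_n\to\psi$ in $H^{1,p}$, I would estimate
\begin{equation*}
\|\mathcal{H}'(\psi_n)-\mathcal{H}'(\psi)\|\le\sup_{\|\xi\|_{1,p}\le 1}\int_M|H_\psi(x,\psi_n)-H_\psi(x,\psi)|\,|\xi|\,dx,
\end{equation*}
and by H\"older this is bounded by $\|H_\psi(\cdot,\psi_n)-H_\psi(\cdot,\psi)\|_{L^{q/(q-1)}}$ times the embedding constant. The task thus reduces to showing the Nemytskii (superposition) operator $\psi\mapsto H_\psi(\cdot,\psi)$ is continuous from $L^q(M,\mathbb{S}(M))$ into $L^{q/(q-1)}(M,\mathbb{S}(M))$, which is exactly the setting of the classical continuity theorem for Nemytskii operators under the growth condition \eqref{eq2.4}; one passes to a subsequence converging a.e., applies continuity of $H_\psi$ pointwise, and uses a generalized dominated convergence argument to upgrade to norm convergence. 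I expect this last point, namely the $L^q\to L^{q/(q-1)}$ continuity of the Nemytskii operator, to be the main obstacle, since it is where the interplay between the growth exponent and the Sobolev embedding must be handled carefully; everything else is routine once the integrability bounds are in place.
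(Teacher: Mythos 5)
Your proposal is correct and follows essentially the same route as the paper's own proof: Gâteaux differentiability via the fiberwise mean value theorem, the growth condition \eqref{eq2.4}, H\"older's inequality and dominated convergence, followed by continuity of the derivative reduced through H\"older to continuity of the Nemytskii operator $\psi\mapsto H_\psi(\cdot,\psi)$ from $L^q$ to $L^{q/(q-1)}$. The only cosmetic differences are that you pair exponents $\bigl(q,\tfrac{q}{q-1}\bigr)$ where the paper uses the critical exponent $\tfrac{mp}{m-p}$ for $\xi$ with the conjugate exponent for $|\psi|^{q-1}$ (both valid since $q<p^*$), and that you sketch the standard subsequence/generalized-dominated-convergence proof of Nemytskii continuity where the paper merely cites it.
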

Refer to Appendix proposition \ref{prop:A.1.} for the proof process of this proposition.

In the view of the calculus of variations, the weak solutions to the problem (\ref{eq1.1}) are obtained as critical points of the following Euler-Lagrange functional
\begin{eqnarray*}
\mathfrak{L}(\psi)=\frac{1}{p}\int_M\mid D\psi\mid^p dx.
-\int_M H(x,\psi(x))dx
\end{eqnarray*}

In fact, for $\xi \in \mathrm {C}^{\infty } (M ,\mathbb{S}(M))$, we have
\begin{equation}\label{eq2.7}
\begin{aligned}
    \left \langle \mathrm{d}\mathfrak{L}\left ( \psi  \right ),\xi \right \rangle
    &= \int_M \left \langle \left | D \psi \right | ^{p-2}D \psi,D\xi \right \rangle dx- \int_{M}\left \langle H_{\psi } \left(x,\psi  \right ),\xi\right \rangle dx\\
    &= \int_{M} \left \langle D\left | D \psi\right | ^{p-2}D\psi,\xi\right \rangle dx-\int_{\mathrm {M}}\left \langle H_{\psi } \left ( x,\psi  \right ), \xi \right \rangle dx.
    \end{aligned}
\end{equation}
It follows from \eqref{eq2.7} that $ \mathrm{d}\mathfrak{L}\left ( \psi  \right ) = 0$ is equivalent to the problem \eqref{eq1.1} with $h = H_{\psi } $ as asserted.

\subsection{The existence of L-S eigenvalues sequence}
In the part of this paper we consider the special case of the problem \eqref{eq1.1}: $h(x,\psi)=\lambda\left | \psi  \right | ^{p-2} \psi$, i.e.,
\begin{equation}\label{eq2.8}
    D_{p}\psi =\lambda\left | \psi  \right | ^{p-2} \psi \quad \text{on} \quad M.
\end{equation}
Especially, when $p=2$, the problem \eqref{eq2.8} becomes Dirac-Laplacian equation $D^2 \psi =\lambda\psi$. The eigenvalues of $D^2$  have been widely studied.
However, for general $p$,  there is nothing research results about the eigenvalues problem of \eqref{eq2.8} on compact spin manifold. In this part, we mainly point out that this equation \eqref{eq2.8} haave some similar properties of eigenvalues  with $p$-Laplacian.

\begin{definition}
     We define that $\psi\in H^{1,p}(M,\mathbb{S}(M))$, $|\psi|\not\equiv 0$, is an eigenfunction for \eqref{eq2.8}, if
     \begin{equation}\label{eq2.9}
         \int_{M}\langle|D\psi|^{p-2}D\psi, D\varphi\rangle dx=\lambda\int_{M}\langle|\psi|^{p-2}\psi,\varphi\rangle dx
     \end{equation}
     Whenever $\varphi\in C^{\infty}(M,\mathbb{S}(M))$. The corresponding real number $\lambda$ is called an eigenvalue.
\end{definition}

 Now we will use the Ljusternik-Schnivelman principle to establish the existence of a sequence of eigenvalue for
 eigenvalue peoblems for $p$-Dirac  operator. We give the famous Ljusternik-Schnivelman principle blow \cite{LAn,ZeEb}.

 Suppose $X$ is a reflexive Banach space and $F,G$ are two continuous functionals on $X$. Denote $S_{G}$ is the
 level $S_{G}=\left\{\psi\in X : G(\psi)=1\right\}$.
\begin{enumerate}
    \item[$(F1)$] $F,G:X\to R$ are even functionals and $F,G\in C^{1}(X,R)$ with $F(0)=G(0)=0$.
    \item[$(F2)$] ${F}'$ is strongly continuous and $\left\langle{F}'(\psi),\psi\right\rangle=0$, $\psi\in \overline{coS_{G}}$ implies $F(\psi)=0$
    where $\overline{coS_{G}}$ is the closed convex hull of $coS_{G}$.
    \item[$(F3)$] ${G}'$ is continuous, bounded and satisfies following condition $(S_0)$, $i.e.$, as $n\to \infty$ ,$\psi_{n}\rightharpoonup\psi$,
    ${G}'(\psi_{n})\rightharpoonup v$, $\left\langle{G}'(\psi_{n}),\psi_{n}\right\rangle\rightarrow\left\langle v,\psi\right\rangle$
    implies $\psi_{n}\to \psi$.
    \item[$(F4)$] The level set $S_{G}$ is bounded and $\psi\ne 0 $ implies $\left\langle{G}'(\psi),\psi\right\rangle>0$,

    $\lim_{t\to +\infty}G(t\psi)=+\infty$, $\inf_{\psi\in S_{G}}\left\langle{G}'(\psi),\psi\right\rangle>0$.
\end{enumerate}
For any positive integer $n$, denote by $A_{n}$ the class of all compact, symmetric subsets $K$ of $S_{G}$ such that $F(\psi)>0$
on $K$ and $\gamma(K)\ge n$, where $\gamma(K)$ denotes the genus of $K$,
$i.e.\gamma(K):=\inf \left\{k\in N: \exists g:K\to R^{k}\setminus 0 \right\}$ such that $g$ is continuous and odd. we define:
\begin{align*}
    a_{n}=\left\{\begin{array}{ll}
        \sup_{\Omega\in A_{n}} \inf_{\psi\in \Omega} F(\psi), &A_{n}\ne \emptyset,\\
        0,&A_{n}=\emptyset,
      \end{array}\right.
\end{align*}
and let
\begin{align*}
    \chi =\left\{\begin{array}{ll}
        \sup \{n\in N: a_{n}>0\}, & \text{if}\enspace a_{1}>0, \\
        0,&\text{if}\enspace a_{1}=0.
      \end{array}\right.
\end{align*}

\begin{theorem}\label{th2.5}
    Under assumptions $(F1)-(F4)$, for the eigenvalue problem
    \begin{equation}\label{eq2.10}
        {F}'(\psi)=\mu {G}'(\psi),\enspace \psi\in S_{G},\enspace\mu \in R,
    \end{equation}
    we have
    \begin{enumerate}
        \item[$(1)$] If $a_{n}>0$, then \eqref{eq2.10} possesses a pair $\pm \psi_{n}$ of eigenvectors and an eigenvalues $\mu_{n}\ne 0$;
        furthermore $F(\psi_{n})=a_{n}$.
        \item[$(2)$] If $\chi =\infty $, \eqref{eq2.10} has infinitely many pairs $\pm\psi$ of eigenvectors corresponding to nonzero eigenvalues.
        \item[$(3)$] $\infty>a_{1}\ge a_{2}\ge \cdots \ge 0$ and $a_{n}\to 0 $ as $n\to \infty$.
        \item[$(4)$] If $\chi=\infty$ and $F(\psi)=0$, $\psi\in \overline{cos_{G}}$ implies $\left\langle{F}'(\psi),\psi\right\rangle=0$,
        then there exists an infinite sequence $\mu_{n}$ of distinct eigenvalues of \eqref{eq2.10} such that $\mu_{n}\to 0$ as $n\to \infty$.
        \item[$(5)$] Assume that $F(\psi )=0$, $\psi\in \overline{coS_{G}}$ implies $\psi=0$. Then $\chi=\infty$ and there exists a sequence of
        eigenpairs $\left\{(\psi_{n}, \mu_{n})\right\}$ of \eqref{eq2.10} such that $\psi_{n}\rightharpoonup 0$,
        $\mu_{n}\to0$ as $n\to\infty$ and $\mu_{n}\ne 0 $ for all $n$.
    \end{enumerate}
\end{theorem}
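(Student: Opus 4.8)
The plan is to read \eqref{eq2.10} as a constrained eigenvalue problem on the level manifold $S_G$ and to run the classical Ljusternik--Schnirelman minimax machinery, whose two engines are the Krasnoselskii genus $\gamma$ and a deformation lemma built from the compactness hidden in (F2)--(F3). First I would record the standard properties of $\gamma$: monotonicity under odd continuous maps, subadditivity, the normalization $\gamma=n$ on a finite-dimensional symmetric sphere $S^{n-1}$, and the fact that a compact symmetric set avoiding $0$ has a symmetric neighborhood of the same genus. With these in hand, the monotonicity in (3), $a_1\ge a_2\ge\cdots\ge 0$, is immediate from $A_{n+1}\subseteq A_n$, and $a_1<\infty$ follows because $F$ is continuous and $S_G$ is bounded by (F4), so $F$ is bounded on $S_G$.

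The core step is a compactness/deformation argument. Using that $F'$ is strongly continuous and that $G'$ satisfies $(S_0)$ together with $\inf_{S_G}\langle G'(\psi),\psi\rangle>0$ from (F4), I would show that any constrained Palais--Smale sequence (i.e. $\psi_n\in S_G$ with $F(\psi_n)$ bounded and $F'(\psi_n)-\mu_n G'(\psi_n)\to 0$) is precompact: passing to $\psi_n\rightharpoonup\psi$ gives $F'(\psi_n)\to F'(\psi)$ strongly, which forces $G'(\psi_n)$ to converge, and then $(S_0)$ yields $\psi_n\to\psi$. This precompactness lets me build an odd, $S_G$-tangent pseudo-gradient flow and an associated deformation $\eta$ that strictly raises $F$ away from critical points while preserving symmetry and genus. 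If some $a_n>0$ were a regular value, I would take $K\in A_n$ with $\inf_K F$ close to $a_n$ and deform it by $\eta$ into a symmetric compact set of genus $\ge n$ on which $F>a_n$, contradicting the definition of $a_n$. Hence each $a_n>0$ is a critical value realized by a pair $\pm\psi_n$ with $F(\psi_n)=a_n$, which via the Lagrange multiplier rule solves \eqref{eq2.10}; pairing $F'(\psi_n)=\mu_n G'(\psi_n)$ with $\psi_n$, and using $\langle G'(\psi_n),\psi_n\rangle>0$ from (F4) together with the contrapositive of (F2) (since $F(\psi_n)=a_n>0$ forces $\langle F'(\psi_n),\psi_n\rangle\ne 0$), gives $\mu_n\ne 0$. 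This proves (1), and (2) is then immediate, since $\chi=\infty$ means $a_n>0$ for every $n$.

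For the remaining assertion in (3), that $a_n\to 0$, the key lemma is that for each $c>0$ the superlevel set $\{\psi\in S_G:F(\psi)\ge c\}$ has finite genus $N_c$. Granting this, any $K\in A_n$ with $n>N_c$ cannot be contained in $\{F\ge c\}$ by genus monotonicity, so $\inf_K F<c$ and therefore $a_n\le c$ for all $n>N_c$, whence $a_n\to 0$. The finite-genus lemma is where the strong continuity of $F'$ is spent decisively: it makes $F$ weakly sequentially continuous, so $\{F\ge c\}$ is weakly compact inside the weakly compact convex set $\overline{coS_G}$, and one extracts norm-compactness, hence finite genus, away from the origin (the bound $c>0$ keeps the set away from $0$). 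Part (4) then follows from its extra hypothesis: a weak limit $\psi$ of $\{\psi_n\}$ satisfies $F(\psi)=\lim a_n=0$ with $\psi\in\overline{coS_G}$, so $\langle F'(\psi),\psi\rangle=0$, and strong continuity of $F'$ propagates this to $\langle F'(\psi_n),\psi_n\rangle\to 0$; since $\langle G'(\psi_n),\psi_n\rangle$ stays bounded below by (F4), we get $\mu_n\to 0$, and nonzero reals tending to $0$ take infinitely many distinct values.

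For (5) I would first verify $\chi=\infty$: given that $F(\psi)=0$ on $\overline{coS_G}$ only at $\psi=0$, I would construct, for each $n$, the intersection of an $n$-dimensional symmetric subspace with $S_G$, a compact symmetric set of genus $n$ on which $F$ does not vanish, and use evenness of $F$ to arrange $F>0$ there, so that $a_n>0$ for all $n$. The weak convergence $\psi_n\rightharpoonup 0$ then follows from boundedness of $\{\psi_n\}\subset S_G$, extraction of a weak limit $\psi\in\overline{coS_G}$, weak continuity of $F$ giving $F(\psi)=\lim a_n=0$, and the hypothesis forcing $\psi=0$. The main obstacle throughout is the deformation lemma on the constraint $S_G$: one must produce a flow that is simultaneously odd, tangent to $S_G$, and genus-preserving, and this is precisely where the compactness packaged in $(S_0)$ and the strong continuity of $F'$ must be handled carefully; the secondary technical hurdle is the finite-genus estimate for $\{F\ge c\}$ that underlies $a_n\to 0$.
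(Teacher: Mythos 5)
You should know at the outset that the paper contains no proof of Theorem~\ref{th2.5} to compare against: it is the classical Ljusternik--Schnirelman principle, imported verbatim with the citations \cite{LAn,ZeEb} (it is essentially Theorem~44.A in Zeidler's book), and the paper's own work consists only of verifying the hypotheses $(F1)$--$(F4)$ for the specific $F,G$ of \eqref{eq2.11}--\eqref{eq2.12} in Lemmas~\ref{lemma2.6}--\ref{lemma2.9}. Judged on its own terms, your reconstruction has the right architecture --- Krasnoselskii genus, a symmetry-preserving deformation on $S_G$, the Lagrange multiplier rule, $\mu_n\ne 0$ via the contrapositive of $(F2)$ paired with $\inf_{S_G}\langle G'(\psi),\psi\rangle>0$, and weak sequential continuity of $F$ extracted from strong continuity of $F'$ --- and parts (1), (2) and (4) of your sketch track the standard proof. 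One structural difference worth noting: the cited source does not build an infinite-dimensional odd tangent flow on $S_G$; it runs the minimax in Galerkin approximations and uses precisely the condition $(S_0)$ of $(F3)$ to pass to the limit, which is how the Banach-space (non-Hilbert) difficulty you flag at the end is actually circumvented.

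There are, however, three concrete gaps. First, ``$F$ is continuous and $S_G$ is bounded, so $F$ is bounded on $S_G$'' is false in infinite dimensions; boundedness of $a_1$ comes instead from weak sequential continuity of $F$ (a consequence of the strong continuity of $F'$ and $F(0)=0$) together with weak compactness of $\overline{co\,S_G}$ in the reflexive space $X$. Second, in your finite-genus lemma the step ``weakly compact $\Rightarrow$ one extracts norm-compactness'' does not hold (weakly compact sets are generally not norm compact, and $\{\psi\in S_G: F(\psi)\ge c\}$ need not even be weakly closed, since the level set $S_G$ is not). The standard repair: by weak sequential continuity and $F(0)=0$, for each $c>0$ there is a closed subspace $X_m$ of finite codimension $m$ (the common kernel of finitely many functionals from a total family) with $\{F\ge c\}\cap\overline{co\,S_G}\cap X_m=\emptyset$ --- otherwise one produces $\psi_k\rightharpoonup 0$ with $F(\psi_k)\ge c$ --- and then the projection onto the $m$-dimensional complement is an odd continuous map nonvanishing on $\{F\ge c\}\cap S_G$, giving $\gamma\le m$ and hence $a_n\to 0$. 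Third, in part (5) evenness of $F$ cannot ``arrange $F>0$'' on $E_n\cap S_G$: the hypothesis only yields $F\ne 0$ there, and positivity (needed for membership in $A_n$, whose definition requires $F>0$) must be argued separately, e.g.\ from connectedness of $E_n\cap S_G$ for $n\ge 2$ together with one point where $F>0$; as written this step is unjustified.
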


Suppose that
\begin{equation}\label{eq2.11}
    F(\psi)=\frac{1}{p}\int_{M}|\psi|^{p}dx,
\end{equation}
\begin{equation}\label{eq2.12}
    G(\psi)=\frac{1}{p}\int_{M}|D\psi|^{p}dx,
\end{equation}
denote $A=F'$, $B=G'$, then we have
\begin{equation}\label{eq2.13}
    \left\langle A\psi, \varphi\right\rangle=\int_{M}|\psi|^{p-2}\langle\psi,\varphi\rangle dx,
\end{equation}
\begin{equation}\label{eq2.14}
    \left\langle B\psi, \varphi\right\rangle=\int_{M}|D\psi|^{p-2}\langle D\psi,D\varphi\rangle dx.
\end{equation}
It is easy to see from \eqref{eq2.13} ,\eqref{eq2.14} that $F,G$ satisfy hypotheses $(F1)$ and $(F4)$. Now we give the proofs of hypotheses $(F2)$ and $(F3)$ as follow.

In order to proving $(F2)$, We extend the lemma in (\cite{RaRe},Theorem12,p.83) to the  following lemma:

\begin{lemma}\label{lemma2.6}
 Let $(M,g)$ be an $m$-dimensional compact spin manifold and $\Phi:\mathbb{R}^+\to \mathbb{R}^+$  be a Young function which satisfies
    a  $\Delta_2$-condition,i.e., there is $c>0$ such that $\Phi(2t)\leq c\Phi(t)$ for all $t\geq 0$. If ${u_n}$ is a sequence of integrable spinor in $\mathbb{S}(M)$  and $u\in \mathbb{S}(M)$such that
    $$
    \lim_{n\to \infty}u_n(x)=u(x), a.e. \ x\in M.\quad \int_M\Phi(|u|)dv_g=\lim_{n\to \infty}\int_M\Phi(|u_n|)dv_g,
    $$
    then
    $$
    \lim_{n\to \infty}\int_M\Phi(|u_n-u|)=0.
    $$
\end{lemma}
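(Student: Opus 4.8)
The plan is to reduce the statement to a scalar convergence result and then run a generalized Fatou (Riesz--Scheffé) argument; I would deliberately avoid the dominated convergence theorem, since no single integrable majorant for $\Phi(|u_n - u|)$ is available. First I would pass from spinors to scalars. Since the fibrewise norm $|\cdot|$ on $\mathbb{S}(M)$ is continuous, the hypothesis $u_n(x)\to u(x)$ a.e. gives both $|u_n(x)|\to|u(x)|$ and $|u_n(x)-u(x)|\to 0$ a.e.; write $v_n=|u_n|$, $v=|u|$, $w_n=|u_n-u|$. The fibrewise triangle inequality yields the pointwise bound $w_n\le v_n+v$. Using that a Young function is nondecreasing and convex with $\Phi(0)=0$, together with the $\Delta_2$-condition, I would derive the key pointwise majorization
\[
\Phi(w_n)\le\Phi(v_n+v)\le\tfrac12\Phi(2v_n)+\tfrac12\Phi(2v)\le\tfrac{c}{2}\bigl(\Phi(v_n)+\Phi(v)\bigr).
\]

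Second, set $G_n:=\tfrac{c}{2}\bigl(\Phi(v_n)+\Phi(v)\bigr)-\Phi(w_n)$, which is nonnegative by the inequality above. Because $\Phi$ is continuous while $v_n\to v$ and $w_n\to 0$ a.e., we have $G_n\to c\,\Phi(v)$ pointwise a.e. Applying Fatou's lemma to the nonnegative sequence $G_n$ gives
\[
c\int_M\Phi(v)\,dv_g\le\liminf_{n\to\infty}\int_M G_n\,dv_g.
\]
On the right, $\int_M G_n=\tfrac{c}{2}\bigl(\int_M\Phi(v_n)+\int_M\Phi(v)\bigr)-\int_M\Phi(w_n)$, and the modular convergence hypothesis $\int_M\Phi(|u_n|)\,dv_g\to\int_M\Phi(|u|)\,dv_g$ lets me replace $\int_M\Phi(v_n)$ by its limit. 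The two occurrences of $c\int_M\Phi(v)$ then cancel, leaving $\limsup_{n\to\infty}\int_M\Phi(w_n)\,dv_g\le 0$; since $\Phi(w_n)\ge 0$ this forces $\int_M\Phi(|u_n-u|)\,dv_g\to 0$, which is the claim.

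The main obstacle is precisely that there is no uniform integrable dominating function for $\Phi(|u_n-u|)$, so a direct dominated convergence argument is unavailable; the entire weight of the proof rests on replacing it by the Fatou argument applied to the auxiliary nonnegative quantity $G_n$, which is the mechanism that converts modular convergence of $\{u_n\}$ into modular convergence of the differences. A secondary point I must pin down is the finiteness of $\int_M\Phi(|u|)\,dv_g$, which is needed in order to cancel the $c\int_M\Phi(v)$ terms: this is guaranteed by the hypothesis, since $\int_M\Phi(|u_n|)\,dv_g$ converges to a finite limit equal to $\int_M\Phi(|u|)\,dv_g$. The passage from the spinor bundle to scalar functions is then routine, being justified solely by the continuity of the fibre metric and the fibrewise triangle inequality.
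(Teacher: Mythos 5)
Your proof is correct, but it follows a genuinely different route from the paper's. The paper does not argue directly: it fixes a finite atlas of exponential charts, uses fiberwise isometries $\tau_i:\mathbb{S}(U_i,g)\to\mathbb{S}(V_i,g_{eucl})$ and a subordinate partition of unity $\{\eta_i\}$ to rewrite $\int_M\Phi(|u|)\,dv_g$ as a sum of weighted Euclidean integrals, and then invokes the scalar Orlicz-space result of Rao--Ren (Theorem 12, p.~83) on each chart, so the analytic core is cited rather than proved. Your argument --- reduce to scalars via the fiber norm and triangle inequality, majorize $\Phi(|u_n-u|)\le\tfrac{c}{2}\bigl(\Phi(|u_n|)+\Phi(|u|)\bigr)$ by convexity plus $\Delta_2$, then apply Fatou to the nonnegative $G_n$ so that modular convergence produces the cancellation --- is in effect a self-contained reproof of that cited theorem, carried out globally on $(M,dv_g)$. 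What your approach buys: it uses neither compactness, nor charts, nor the spin structure (it works on any measure space and any bundle with a fiber metric), and it handles the vector-valued nature of spinors cleanly; note that the paper's reduction leaves this last point implicit, since after trivialization the spinors are still $\mathbb{C}^N$-valued while the cited theorem concerns scalar functions, so the fiber-norm/triangle-inequality step you perform is in fact needed in the paper's route as well. What the paper's approach buys is brevity and reliance on a standard reference. One point you rightly flag and should state explicitly: the cancellation requires $\int_M\Phi(|u|)\,dv_g<\infty$; as written, the hypothesis only asserts equality of the limit with this integral, so finiteness must be read into the word ``integrable'' (as both you and the paper implicitly do, and as holds where the lemma is applied later to $w_n=|\psi_n|^{p-2}\psi_n$).
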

\begin{proof}
In order to distinguish volume measures under different measures, we denote here $dv_g$ that is the Remannian volume measure on $(M,g)$.

Let $x\in M$  and $\varepsilon>0$. Let $U$(resp. $V$) be a neighborhood of $x\in M$(resp. $0\in \mathbb{R}^n$) such that the exponential map
\begin{equation}\label{eq2.15}
  \exp_x:V\subset(\mathbb{R}^n,g_{eucl})\to U\subset (M,g)
\end{equation}
is a diffeomorphism for every $x\in M$, where $g_{eucl}$ stands for the Euclidean scalar product. Then we can identify the spinor bundle over $(U,g)$ with the one over $(V,g_{eucl})$ that is there exists a map:
\begin{equation}\label{eq2.16}
  \tau:\mathbb{S}(U,g)\to \mathbb{S}(V,g_{eucl}),\psi(y)\to (\tau \psi)(\exp_x^{-1}(y)),
\end{equation}
which is a fiberwise isometry.

Since $M$ is compact, we choose a finite sequence $\{x_1,x_2,\cdot\cdot\cdot,x_{N_0}\}\subset M$ and a finite cover $\{U_i\}_{i=1}^{N_0}$ of $M$
 such that there exist open sets $\{V_i\}_{i=1}^{N_0}$ of $0\in \mathbb{R}^n$, where $x_i\in U_i$. Moreover, we set $\tau_i$ such that \eqref{eq2.15} and \eqref{eq2.16} are fulfilled.

 Let $\eta_i,(i=1,2,\cdot\cdot\cdot,N_0)$ be a smooth partition of unity subordinate to the covering $\{U_i\}_{i=1}^{N_0}$, i.e.,
 $$
 \overline{\{x\in M|\eta_i(x)\neq 0\}}\subset U_i,\quad 0\leq \eta_i\leq 1,\quad \sum_{i=1}^{N_0}\eta_i=1.
 $$
 Hence, for any $u\in \mathbb{S}(U,g)$,  we have that
\begin{eqnarray}\label{eq2.17}
  \int_M\Phi(|u|)dv_g &=& \sum_{i=1}^{N_0}\int_{U_i}\eta_i \Phi(|u|)dv_g \\ \nonumber
  &=& \sum_{i=1}^{N_0}\int_{V_i}\eta_i \Phi(|(\tau u)(\exp_{x}^{-1}(y))|) det(g_{kl}(x))^\frac{1}{2}dv_{g_{eucl}} .
\end{eqnarray}
By \eqref{eq2.17}, We just need to prove the lemma A on domain $V\subset \mathbb{R}^n$.  Since this assertion is equivalent to the result in  (\cite{RaRe},Theorem12,p.83). Therefore we complete the proof.
\end{proof}

\begin{lemma}\label{lemma2.7}
    Let $F$ be an defined as in \eqref{eq2.11}, then $F'$ satisfies $(F2)$.
\end{lemma}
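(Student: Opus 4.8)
The condition $(F2)$ consists of two assertions: that $F'=A$ is strongly continuous, and that $\langle A\psi,\psi\rangle=0$ together with $\psi\in\overline{coS_G}$ forces $F(\psi)=0$. The second assertion is immediate from the explicit formula \eqref{eq2.13}: taking $\varphi=\psi$ gives $\langle A\psi,\psi\rangle=\int_M|\psi|^{p-2}\langle\psi,\psi\rangle\,dx=\int_M|\psi|^p\,dx=pF(\psi)$, so $\langle A\psi,\psi\rangle=0$ yields $F(\psi)=0$ at once (the membership in $\overline{coS_G}$ is not even needed here). Hence the whole content of the lemma is the strong, i.e.\ complete, continuity of $A$: one must show that $\psi_n\rightharpoonup\psi$ in $H^{1,p}(M,\mathbb{S}(M))$ implies $A\psi_n\to A\psi$ strongly in the dual space.

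To prove this, first I would invoke the compact Sobolev embedding $H^{1,p}(M,\mathbb{S}(M))\hookrightarrow\hookrightarrow L^q(M,\mathbb{S}(M))$ valid for every $q<p^*=\frac{mp}{m-p}$. Since $p<p^*$, weak convergence $\psi_n\rightharpoonup\psi$ in $H^{1,p}$ upgrades to strong convergence $\psi_n\to\psi$ in $L^p(M,\mathbb{S}(M))$. In particular $\int_M|\psi_n|^p\,dx\to\int_M|\psi|^p\,dx$, and after passing to a subsequence we also have $\psi_n(x)\to\psi(x)$ for a.e.\ $x\in M$.

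Next I would set $v_n:=|\psi_n|^{p-2}\psi_n$ and $v:=|\psi|^{p-2}\psi$, the fiberwise duality maps, and note that $|v_n|^{p'}=|\psi_n|^p$ and $|v|^{p'}=|\psi|^p$ with $p'=\frac{p}{p-1}$. The a.e.\ convergence of $\psi_n$ gives $v_n\to v$ a.e., and the norm convergence above gives $\int_M|v_n|^{p'}\,dx\to\int_M|v|^{p'}\,dx$. This is precisely the hypothesis of Lemma \ref{lemma2.6} applied with the Young function $\Phi(t)=t^{p'}$, which satisfies the $\Delta_2$-condition with $c=2^{p'}$; the lemma therefore yields $\int_M|v_n-v|^{p'}\,dx\to0$, that is $v_n\to v$ in $L^{p'}(M,\mathbb{S}(M))$. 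Finally, by \eqref{eq2.13} and H\"older's inequality,
\[
|\langle A\psi_n-A\psi,\varphi\rangle|=\Bigl|\int_M\langle v_n-v,\varphi\rangle\,dx\Bigr|\le\|v_n-v\|_{p'}\,\|\varphi\|_p\le C\|v_n-v\|_{p'}\,\|\varphi\|_{1,p},
\]
where I have used the continuous embedding $H^{1,p}\hookrightarrow L^p$; taking the supremum over $\|\varphi\|_{1,p}\le1$ gives $\|A\psi_n-A\psi\|_*\le C\|v_n-v\|_{p'}\to0$. A standard subsequence argument (every subsequence of $(\psi_n)$ admits a further subsequence along which $A\psi_n\to A\psi$) then promotes this to convergence of the full sequence.

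The one genuinely nontrivial ingredient is the passage from a.e.\ convergence plus convergence of the $L^{p'}$-norms to strong $L^{p'}$-convergence of the nonlinear quantities $v_n$; this is exactly what Lemma \ref{lemma2.6} supplies, and it is the reason that lemma was proved. Everything else --- the compact embedding, the H\"older estimate, and the subsequence bookkeeping --- is routine.
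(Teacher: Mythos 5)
Your proof is correct and takes essentially the same route as the paper's own: upgrade $\psi_n\rightharpoonup\psi$ to strong $L^p$ and (after a subsequence) a.e.\ convergence via the compact Sobolev embedding, apply Lemma~\ref{lemma2.6} with $\Phi(t)=t^{p/(p-1)}$ to $v_n=|\psi_n|^{p-2}\psi_n$ to get $v_n\to v$ in $L^{p/(p-1)}$, and conclude with H\"older's inequality and the embedding $H^{1,p}\hookrightarrow L^p$. You are in fact slightly more complete than the paper, which silently dismisses the second half of $(F2)$ (your observation $\langle A\psi,\psi\rangle=pF(\psi)$) and omits the subsequence bookkeeping for the full-sequence convergence.
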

\begin{proof}
    We only need to prove $A$ is strongly continuous, $i.e.$ if $\psi_{n}\rightharpoonup\psi$ in $X:=H^{1,p}(M,\mathbb{S}(M))$,
    then $A\psi_{n}\to A\psi$ in $X^{*}$.
    \begin{align}
        |\left\langle A\psi_{n}-A\psi,\varphi\right\rangle|=&\left|\int_{M}\langle\psi_{n}|^{p-2}\psi_{n}-|\psi|^{p-2}\psi,\varphi\rangle dx\right|
        \nonumber\\
        \le& \left\||\psi_{n}|^{p-2}\psi_{n}-|\psi|^{p-2}\psi\right\|_{\frac{p}{p-1}}\left\|\varphi\right\|_{p}
        \nonumber\\
        \le& C\left\||\psi_{n}|^{p-2}\psi_{n}-|\psi|^{p-2}\psi\right\|_{\frac{p}{p-1}}\left\|\varphi\right\|_{1,p}
        \nonumber
    \end{align}
    If we can prove that $|\psi_{n}|^{p-2}\psi_{n}\to |\psi|^{p-2}\psi$ in $L^{\frac{p}{p-1}}(M)$, then we complete the proof. To
     see this, let $w_n=|\psi_{n}|^{p-2}\psi_{n}$ and $w=\psi|^{p-2}\psi$.
    Since $\psi_{n}\rightharpoonup\psi$ in $H^{1,p}(M,\mathbb{S}(M))$,
 $\psi_{n}\to \psi$ in $L^{p}(M)$ and $w_n\rightarrow w$,a.e. in $M$.

 From
 $\int_M|\psi_n|^pdx-\int_M|\psi_n-\psi|^pdx\rightarrow  \int_M|\psi|^pdx$, we have
 $$\int_M|w_n|^{\frac{p}{p-1}}dx=\int_M|\psi_n|^pdx\rightarrow  \int_M|w|^{\frac{p}{p-1}}dx=\int_M|\psi|^pdx.$$
  We conclude from lemma\ref{lemma2.6}  that $|\psi_{n}|^{p-2}\psi_{n}\to |\psi|^{p-2}\psi$ in $L^{\frac{p}{p-1}}(M)$.
\end{proof}

In order to verify $(F3)$ we need the following lemma:
\begin{lemma}\label{lemma2.8}
    Let $B$ be defined as in \eqref{eq2.14}, then
    \begin{equation}
        \left\langle B\psi-B\varphi,\psi-\varphi\right\rangle\ge (\left\|\psi\right\|^{p-1}_{1,p}-\left\|\varphi\right\|^{p-1}_{1,p})(\left\|\psi\right\|_{1,p}-\left\|\varphi\right\|_{1,p}).
        \nonumber
    \end{equation}
Furthermore,$\langle B\psi-B\varphi,\psi-\varphi\rangle=0$ if and only if $u=v$ a.e. in $M$.
\end{lemma}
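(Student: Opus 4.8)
The plan is to rewrite the pairing $\langle B\psi-B\varphi,\psi-\varphi\rangle$ as a single integral of a pointwise inner product, and then to treat the inequality and the equality characterization by genuinely different arguments, since the stated lower bound is \emph{not} strict and hence cannot by itself yield the rigidity statement.

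First I would use \eqref{eq2.14} together with $|D\psi|^p=\langle|D\psi|^{p-2}D\psi,D\psi\rangle$ to obtain
\[
\langle B\psi-B\varphi,\psi-\varphi\rangle=\int_M\langle|D\psi|^{p-2}D\psi-|D\varphi|^{p-2}D\varphi,\,D\psi-D\varphi\rangle\,dx .
\]
Expanding the integrand yields two diagonal terms $|D\psi|^p$ and $|D\varphi|^p$, whose integrals are $\|\psi\|_{1,p}^p$ and $\|\varphi\|_{1,p}^p$, plus two cross terms. For each cross term I would apply the pointwise Cauchy--Schwarz inequality $\langle D\psi,D\varphi\rangle\le|D\psi||D\varphi|$ and then H\"older's inequality with exponents $p/(p-1)$ and $p$:
\[
\int_M|D\psi|^{p-2}\langle D\psi,D\varphi\rangle\,dx\le\int_M|D\psi|^{p-1}|D\varphi|\,dx\le\|\psi\|_{1,p}^{p-1}\|\varphi\|_{1,p},
\]
and symmetrically with $\psi$ and $\varphi$ interchanged. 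Combining the four contributions gives $\|\psi\|_{1,p}^p+\|\varphi\|_{1,p}^p-\|\psi\|_{1,p}^{p-1}\|\varphi\|_{1,p}-\|\varphi\|_{1,p}^{p-1}\|\psi\|_{1,p}=(\|\psi\|_{1,p}^{p-1}-\|\varphi\|_{1,p}^{p-1})(\|\psi\|_{1,p}-\|\varphi\|_{1,p})$, which is the asserted bound.

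For the equality characterization I would instead invoke the pointwise strict monotonicity of $\xi\mapsto|\xi|^{p-2}\xi$ on the fibers of $\mathbb{S}(M)$: for all $a,b$ one has $\langle|a|^{p-2}a-|b|^{p-2}b,a-b\rangle\ge(|a|^{p-1}-|b|^{p-1})(|a|-|b|)\ge0$, again by Cauchy--Schwarz and the monotonicity of $t\mapsto t^{p-1}$. Thus the integrand above is nonnegative almost everywhere, so $\langle B\psi-B\varphi,\psi-\varphi\rangle=0$ forces it to vanish a.e. Examining the pointwise equality case shows that vanishing first forces $|D\psi|=|D\varphi|$; when this common value is positive the integrand reduces to $|D\psi|^{p-2}|D\psi-D\varphi|^2$, while the zero value gives $D\psi=D\varphi=0$ directly, so in either case $D\psi=D\varphi$ a.e. Since the paper assumes $D$ is invertible, $D(\psi-\varphi)=0$ gives $\psi=\varphi$; the converse is immediate because $\psi=\varphi$ makes $B\psi=B\varphi$.

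The main obstacle is the equality part, not the inequality: the lower bound established in the second step vanishes whenever $\|\psi\|_{1,p}=\|\varphi\|_{1,p}$, so it is blind to the case $\psi\ne\varphi$ with equal norms. The rigidity therefore genuinely requires the separate pointwise monotonicity estimate and a careful treatment of the degenerate fiberwise case $|D\psi|=|D\varphi|=0$, where the assumed invertibility of $D$ (equivalently, the fact that $\|\cdot\|_{1,p}$ is a genuine norm) is exactly what closes the argument.
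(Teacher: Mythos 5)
Your proof of the displayed inequality coincides with the paper's: expand the pairing into the two diagonal terms $\|\psi\|_{1,p}^p$, $\|\varphi\|_{1,p}^p$ and two cross terms, bound each cross term by the pointwise Cauchy--Schwarz inequality followed by H\"older with exponents $p/(p-1)$ and $p$, and factor the result. Where you genuinely diverge is the \enquote{furthermore} part, and there your diagnosis is exactly right: after deriving the same lower bound, the paper simply asserts \enquote{Furthermore, it follows that $\langle B\psi-B\varphi,\psi-\varphi\rangle=0$ if and only if $\psi=\varphi$ a.e.} with no argument, and this cannot follow from that bound, since the right-hand side vanishes whenever $\|\psi\|_{1,p}=\|\varphi\|_{1,p}$, which certainly does not force $\psi=\varphi$. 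Your repair is sound and is the standard one: the fiberwise estimate
\[
\bigl\langle |a|^{p-2}a-|b|^{p-2}b,\,a-b\bigr\rangle\ \ge\ \bigl(|a|^{p-1}-|b|^{p-1}\bigr)\bigl(|a|-|b|\bigr)\ \ge\ 0
\]
shows the integrand is nonnegative a.e., so vanishing of the integral forces pointwise equality; tracing the equality cases gives first $|D\psi|=|D\varphi|$ (strict monotonicity of $t\mapsto t^{p-1}$ for $p>1$), then equality in Cauchy--Schwarz, hence $D\psi=D\varphi$ a.e., with your case split correctly handling the degenerate value $|D\psi|=|D\varphi|=0$; finally the standing assumption that $D$ is invertible, equivalently that $\|\cdot\|_{1,p}=\|D\cdot\|_p$ is a genuine norm on $H^{1,p}(M,\mathbb{S}(M))$ (Lemma~\ref{lem:2.2}), converts $D(\psi-\varphi)=0$ into $\psi=\varphi$. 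So relative to the paper your proposal is strictly more complete: identical on the inequality, but supplying the rigidity argument that the paper's one-line claim (stated there with the typo $u=v$ for $\psi=\varphi$) omits, and making explicit that invertibility of $D$ is where the conclusion actually comes from. One cosmetic caution: if you write the pointwise remainder term in the form $\bigl(|a|^{p-2}+|b|^{p-2}\bigr)\bigl(|a||b|-\langle a,b\rangle\bigr)$, note that for $1<p<2$ the factor $|a|^{p-2}$ is singular at $a=0$, so the case analysis you chose (treating $|a|=|b|=0$ separately) is the right way to phrase it.
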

\begin{proof}
    We have
    \begin{equation}
        \begin{aligned}
            &\left\langle B\psi-B\varphi,\psi-\varphi\right\rangle
            \\
            =&\int_{M}|D\psi|^{p}+|D\varphi|^{p}-|D\psi|^{p-2}\langle D\psi,D\varphi\rangle-|D\varphi|^{p-2}\langle D\varphi, D\psi\rangle dx.
        \end{aligned}
        \nonumber
    \end{equation}
By H$\ddot{o}$lder’s inequality, we have
\begin{equation}
    \begin{aligned}
        &\int_{M}|D\psi|^{p-2}\langle D\psi,D\varphi\rangle dx
        \le&\left(\int_{M}|D\psi|^{p}dx\right)^{\frac{p-1}{p}}\left(\int_{M}|D\varphi|^{p}dx\right)^{\frac{1}{p}}
        \\
            =&\|\psi\|_{1,p}^{p-1}\|\varphi\|_{1,p}
    \end{aligned}
    \nonumber
\end{equation}
Similarly we have
\begin{equation}
    \int_{M}|D\varphi|^{p-2}\langle D\varphi,D\psi\rangle dx\le \left\|\varphi\right\|_{1,p}^{p-1}\left\|\psi\right\|_{1,p}.
    \nonumber
\end{equation}
Therefore,
\begin{equation}
    \begin{aligned}
        \left\langle B\psi-B\varphi,\psi-\varphi\right\rangle
        \ge&\left\|\psi\right\|_{1,p}^{p}+\left\|\varphi\right\|_{1,p}^{p}-\left\|\psi\right\|_{1,p}^{p-1}\left\|\varphi\right\|_{1,p}-\left\|\varphi\right\|_{1,p}^{p-1}\left\|\psi\right\|_{1,p}
        \\
        \ge&\left(\left\|\psi\right\|_{1,p}^{p-1}-\left\|\varphi\right\|_{1,p}^{p-1}\right)\left(\left\|\psi\right\|_{1,p}-\left\|\varphi\right\|_{1,p}\right).
    \end{aligned}
    \nonumber
\end{equation}
Furthermore, it follow that $\langle B\psi-B\varphi,\psi-\varphi\rangle=0$ if and only if $\psi=\varphi$ a.e. in $M$.
\end{proof}

\begin{lemma}\label{lemma2.9}
    Let $G$ be defined as in \eqref{eq2.12}, then $G'$ satisfies $(F3)$.
\end{lemma}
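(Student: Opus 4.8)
The plan is to verify the three requirements hidden in $(F3)$ separately: boundedness of $B=G'$, continuity of $B$, and the condition $(S_0)$, the last being the crux. First I would dispose of boundedness and continuity, which are routine. For boundedness, Hölder's inequality applied to \eqref{eq2.14} gives
\[
|\langle B\psi,\varphi\rangle|\le\int_M|D\psi|^{p-1}|D\varphi|\,dx\le\|\psi\|_{1,p}^{p-1}\|\varphi\|_{1,p},
\]
so $\|B\psi\|_{X^*}\le\|\psi\|_{1,p}^{p-1}$ and $B$ maps bounded sets to bounded sets. For continuity, if $\psi_n\to\psi$ strongly in $X=H^{1,p}(M,\mathbb{S}(M))$ then $D\psi_n\to D\psi$ in $L^p$; since the Nemytskii map $u\mapsto|u|^{p-2}u$ is continuous from $L^p$ into $L^{p/(p-1)}$, we obtain $|D\psi_n|^{p-2}D\psi_n\to|D\psi|^{p-2}D\psi$ in $L^{p/(p-1)}$, and a further Hölder estimate yields $\|B\psi_n-B\psi\|_{X^*}\to0$.

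The main work is $(S_0)$. Suppose $\psi_n\rightharpoonup\psi$ in $X$, $B\psi_n\rightharpoonup v$ in $X^*$, and $\langle B\psi_n,\psi_n\rangle\to\langle v,\psi\rangle$. I would expand
\[
\langle B\psi_n-B\psi,\psi_n-\psi\rangle=\langle B\psi_n,\psi_n\rangle-\langle B\psi_n,\psi\rangle-\langle B\psi,\psi_n\rangle+\langle B\psi,\psi\rangle.
\]
By hypothesis $\langle B\psi_n,\psi_n\rangle\to\langle v,\psi\rangle$; since $B\psi_n\rightharpoonup v$ and $\psi$ is fixed, $\langle B\psi_n,\psi\rangle\to\langle v,\psi\rangle$; and since $\psi_n\rightharpoonup\psi$ with $B\psi$ fixed, $\langle B\psi,\psi_n\rangle\to\langle B\psi,\psi\rangle$. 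Hence the whole quantity tends to $0$. Invoking the lower bound of Lemma \ref{lemma2.8},
\[
0\le\left(\|\psi_n\|_{1,p}^{p-1}-\|\psi\|_{1,p}^{p-1}\right)\left(\|\psi_n\|_{1,p}-\|\psi\|_{1,p}\right)\le\langle B\psi_n-B\psi,\psi_n-\psi\rangle\to0.
\]
Because $t\mapsto t^{p-1}$ is strictly increasing and $\{\|\psi_n\|_{1,p}\}$ is bounded (a weakly convergent sequence is bounded), a subsequence argument forces $\|\psi_n\|_{1,p}\to\|\psi\|_{1,p}$.

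Finally I would upgrade norm convergence to strong convergence via uniform convexity. Since $\|\psi\|_{1,p}=\|D\psi\|_p$ and $D$ is invertible, the map $\psi\mapsto D\psi$ is an isometric isomorphism of $X$ onto a closed subspace of $L^p(M,\mathbb{S}(M))$; as $L^p$ is uniformly convex for $1<p<\infty$ (Clarkson), so is $X$. In a uniformly convex space the Radon-Riesz (Kadec-Klee) property holds, so $\psi_n\rightharpoonup\psi$ together with $\|\psi_n\|_{1,p}\to\|\psi\|_{1,p}$ gives $\psi_n\to\psi$ strongly, which is precisely $(S_0)$. The step I expect to be the main obstacle is exactly this passage: one must first extract norm convergence from the monotonicity estimate of Lemma \ref{lemma2.8}, and then justify carefully that $X$ equipped with the $D$-norm is uniformly convex so that the Radon-Riesz property applies; the limit computation for $\langle B\psi_n-B\psi,\psi_n-\psi\rangle$ is where the three separate hypotheses of $(S_0)$ must all be used in concert.
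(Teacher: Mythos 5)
Your proof is correct, and its skeleton coincides with the paper's: boundedness and continuity of $B$ are dispatched by H\"older's inequality and the Nemytskii operator, condition $(S_0)$ is reduced to showing $\langle B\psi_n-B\psi,\psi_n-\psi\rangle\to 0$ via the same four-term expansion that uses all three hypotheses of $(S_0)$, and the monotonicity bound of Lemma~\ref{lemma2.8} then forces $\|\psi_n\|_{1,p}\to\|\psi\|_{1,p}$. The genuine difference is the final step, where norm convergence is upgraded to strong convergence. The paper invokes the Lindenstrauss--Asplund--Troyanski renorming theorem: $H^{1,p}(M,\mathbb{S}(M))$, being reflexive, admits an equivalent locally uniformly convex norm, in which weak convergence together with norm convergence implies strong convergence. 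You instead observe that no renorming is needed: since $\|\psi\|_{1,p}=\|D\psi\|_p$ and $X=H^{1,p}(M,\mathbb{S}(M))$ is complete, $D$ is an isometry of $X$ onto a closed subspace of $L^p(M,\mathbb{S}(M))$, which is uniformly convex for $1<p<\infty$, so $X$ itself is uniformly convex and the Radon--Riesz property applies directly to $\|\cdot\|_{1,p}$. Your route is not only more self-contained; it is tighter at precisely the point where the paper's argument is loose: after renorming, the Kadec--Klee property pertains to the \emph{new} norm, whereas what is actually proved is convergence of the \emph{original} norm $\|\cdot\|_{1,p}$, and weak convergence plus convergence of one equivalent norm does not in general yield convergence of another. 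By applying Radon--Riesz to exactly the norm whose convergence Lemma~\ref{lemma2.8} delivers, you avoid that mismatch. The only details worth writing out are that $D(X)$ is closed in $L^p$ (completeness of $X$ plus the isometry), and that the $L^p$ space of spinor fields, having Hilbertian fibers, satisfies Clarkson-type inequalities and hence is indeed uniformly convex.
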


\begin{proof}
Since $B=G'$, it suffices to show this for $B$. Similar to lemma\ref{lemma2.7}, it is easy to verify $B$ is continuous and bounded by Sobolev's embedding theorem and H$\ddot{o}$lder' inequality.

It remains to show that $B$ satifies condition $(S_0)$.  According to  $u_n\rightharpoonup u$ in $H^{1,p}(M,\mathbb{S}(M))$, we have $u_n\rightarrow u$ in $L^p(M,\mathbb{S}(M))$. Since $H^{1,p}(M,\mathbb{S}(M))$ is a reflexive Banach space,
using the Lindenstrauss-Asplund-Troyanski theorem in \cite{Tro} one can find an equivalent norm such that $H^{1,p}(M,\mathbb{S}(M))$ with this norm is locally uniformly convex.
In such a space weak convergence and norm convergence imply convergence. Thus to show
$\psi_{n}\to \psi $ in $H^{1,p}(M,\mathbb{S}(M))$, we only need to show $\left\|\psi_{n}\right\|_{1,p}\to \left\|\psi\right\|_{1,p}$. We have
\begin{equation}
    \lim_{n\to\infty}\left\langle B\psi_{n}-B\psi,\psi_{n}-\psi\right\rangle=\lim_{n\to \infty}\left(\left\langle B\psi_{n}, \psi_{n}\right\rangle-\left\langle B\psi_{n}-\psi\right\rangle-\left\langle B\psi, \psi_{n}-\psi\right\rangle\right)=0
    \nonumber
\end{equation}
On the other hand we have
\begin{equation}
    \left\langle B\psi-B\varphi,\psi-\varphi\right\rangle\ge \left(\left\|\psi\right\|_{1,p}^{p-1}-\left\|\varphi\right\|_{1,p}^{p-1}\right)\left(\left\|\psi\right\|_{1,p}-\left\|\varphi\right\|_{1,p}\right).
    \nonumber
\end{equation}
Hence $\left\|\psi_{n}\right\|_{1,p}\to \left\|\psi\right\|_{1,p}$ as $n\to \infty$, therefore we complete the proof.
\end{proof}
From the theorem\ref{th2.5}, we obtain the following:
\begin{theorem}\label{th2.10}
    There exists a nondecreasing sequence of nonnegative eigenvalues $\left\{\lambda_{n}\right\}$ of \eqref{eq2.9} obtained by using the L-S principle
    such that $\lambda_{n}=\frac{1}{\mu_{n}}-1\to \infty$ as $n\to\infty$,
    where each $\mu_{n}$ is an eigenvalue of the corresponding equation $F'(\psi)=\mu G'(\psi)$.
\end{theorem}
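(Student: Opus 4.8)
The plan is to apply the Ljusternik--Schnirelman principle, Theorem~\ref{th2.5}, to the pair $(F,G)$ from \eqref{eq2.11}--\eqref{eq2.12}, and then to translate the resulting abstract eigenvalues $\mu_n$ of \eqref{eq2.10} into eigenvalues $\lambda_n$ of \eqref{eq2.9}. The hypotheses are already in place: as noted after \eqref{eq2.14}, $F$ and $G$ satisfy $(F1)$ and $(F4)$; Lemma~\ref{lemma2.7} establishes $(F2)$ for $A=F'$; and Lemma~\ref{lemma2.9} establishes $(F3)$ for $B=G'$. Hence Theorem~\ref{th2.5} applies to the problem $A\psi=\mu B\psi$ on $S_G$.

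I would first identify the correct conclusion of Theorem~\ref{th2.5}. Since $F(\psi)=\frac1p\int_M|\psi|^p\,dx$ vanishes only when $\psi=0$ a.e., the hypothesis of part $(5)$ (``$F(\psi)=0$, $\psi\in\overline{coS_G}$, implies $\psi=0$'') holds. Part $(5)$ then yields $\chi=\infty$ together with a sequence of eigenpairs $\{(\psi_n,\mu_n)\}$ of \eqref{eq2.10} with $\mu_n\neq0$ and $\mu_n\to0$ as $n\to\infty$. In addition, part $(1)$ gives $F(\psi_n)=a_n$ for each $n$, and part $(3)$ gives $\infty>a_1\ge a_2\ge\cdots\ge0$ with $a_n\to0$.

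The core computation is to read off the sign, the monotonicity, and the size of $\mu_n$ from this variational data. Testing the identity $A\psi_n=\mu_n B\psi_n$ against $\psi_n$ and using \eqref{eq2.13}, \eqref{eq2.14} with the normalisation $G(\psi_n)=1$ gives $\langle A\psi_n,\psi_n\rangle=\int_M|\psi_n|^p\,dx=pF(\psi_n)=pa_n$ and $\langle B\psi_n,\psi_n\rangle=\int_M|D\psi_n|^p\,dx=pG(\psi_n)=p$, so that $\mu_n=a_n$. Thus $\{\mu_n\}$ is positive, nonincreasing, and tends to $0$, while the embedding $H^{1,p}(M,\mathbb{S}(M))\hookrightarrow L^p(M,\mathbb{S}(M))$ on the constraint set $S_G$ bounds the $a_n$ and keeps $\mu_n\le1$. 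Rewriting $A\psi_n=\mu_n B\psi_n$ in the weak form \eqref{eq2.9} by a direct algebraic manipulation yields the eigenvalue $\lambda_n=\frac{1}{\mu_n}-1$. Because $\{\mu_n\}\subset(0,1]$ is nonincreasing with $\mu_n\to0$, the sequence $\{\lambda_n\}$ is nonnegative, nondecreasing, and satisfies $\lambda_n\to\infty$, which is the assertion.

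The main obstacle is not analytic depth---most of the work is carried by Lemmas~\ref{lemma2.7} and~\ref{lemma2.9}---but careful bookkeeping: invoking precisely part $(5)$ of Theorem~\ref{th2.5} so as to produce a genuinely infinite family with $\mu_n\neq0$ and $\mu_n\to0$, correctly extracting $\mu_n=a_n$ (hence the monotonicity and positivity), and controlling $\mu_n\le1$ from the constraint so that the converted eigenvalues $\lambda_n$ remain nonnegative. The transition from \eqref{eq2.10} to \eqref{eq2.9} must be carried out consistently with the normalisation defining $S_G$.
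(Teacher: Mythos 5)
Your overall strategy is the same as the paper's: verify $(F1)$--$(F4)$ for the pair $(F,G)$ of \eqref{eq2.11}--\eqref{eq2.12} via Lemmas~\ref{lemma2.7} and~\ref{lemma2.9}, invoke Theorem~\ref{th2.5} to produce a nonincreasing sequence of positive eigenvalues $\mu_n\to 0$ of \eqref{eq2.10}, and then convert these into eigenvalues of \eqref{eq2.9}. In fact you supply more detail than the paper, whose proof is three sentences: you correctly single out part $(5)$ of Theorem~\ref{th2.5} (its hypothesis holds because $F(\psi)=0$ forces $\psi=0$ a.e.), and your identification $\mu_n=a_n=F(\psi_n)$, obtained by testing $A\psi_n=\mu_n B\psi_n$ against $\psi_n$ and using $G(\psi_n)=1$, is exactly what underlies the paper's assertion $\mu_n=\sup_{K\in A_n}\inf_{\psi\in K}F(\psi)$.

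However, the one step you wave through --- ``rewriting $A\psi_n=\mu_n B\psi_n$ in the weak form \eqref{eq2.9} by a direct algebraic manipulation yields $\lambda_n=\frac{1}{\mu_n}-1$'' --- is precisely the step that does not work as stated. With $G$ as in \eqref{eq2.12}, dividing $F'(\psi_n)=\mu_n G'(\psi_n)$ by $\mu_n\neq 0$ gives
\[
\int_M |D\psi_n|^{p-2}\langle D\psi_n, D\varphi\rangle\, dx \;=\; \frac{1}{\mu_n}\int_M |\psi_n|^{p-2}\langle\psi_n,\varphi\rangle\, dx ,
\]
i.e.\ $\lambda_n=1/\mu_n$, with no $-1$. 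The formula $\lambda=\frac{1}{\mu}-1$ comes from the $p$-Laplacian setting of \cite{LAn}, where $G$ is $\frac{1}{p}$ times the \emph{full} $W^{1,p}$-norm, $\frac1p\int(|\nabla u|^p+|u|^p)\,dx$, so that an $F'$-type term appears on both sides and can be moved across; nothing of the sort happens with \eqref{eq2.12}. (The paper's own proof shares this defect --- it merely says ``it is easy to see'' --- so you have reproduced its gap rather than introduced a new one.) Relatedly, your claim that the embedding ``keeps $\mu_n\le 1$'' is unsupported: from $\|\psi\|_p\le C\|D\psi\|_p$ on $S_G$ one only gets $a_n\le C^p$, and the Sobolev constant need not be $\le 1$, so nonnegativity of $\frac{1}{\mu_n}-1$ is not secured this way. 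Both problems disappear if the conversion is done correctly: with $\lambda_n=1/\mu_n$ one obtains a positive, nondecreasing sequence of eigenvalues of \eqref{eq2.9} tending to $\infty$, which is the substance of the theorem; but as a proof of the literal formula $\lambda_n=\frac{1}{\mu_n}-1$ in the statement, your argument (like the paper's) has a hole at exactly that point.
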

\begin{proof}
    Let $F,G$ be the two functionals defined in \eqref{eq2.11}, \eqref{eq2.12}. Then by the above lemmas,
    there exists a nonincreasing sequence of nonnegative eigenvalues $\mu_{n}$ obtained from the L-S principle such that $\mu_{n}\to 0$
    as $n\to \infty$ where $\mu_{n}=\sup_{H\in A_{n}} \inf_{\psi\in H}F(\psi)$ and each $\mu_{n}$ is an eigenvalue of $F'(\psi)=\mu G'(\psi)$.
    Then it is easy to see \eqref{eq2.9}
    has a nondecreasing sequence of nonnegative eigenvalues $\lambda_{n}=\frac{1}{\mu_{n}}-1\to\infty$ as $n\to \infty$.
\end{proof}

Finally, we can also know easily each eigenvalue $\lambda$ is positive on \eqref{eq2.9}. For every eigenvalue $\lambda$,
by approximation, the corresponding eigenfunction $\psi$ can also be a test-function in \eqref{eq2.9}. Therefore we have
\begin{equation}
    \lambda=\frac{\int_{M}|D\psi|^{p}dx}{\int_{M}|\psi|^{p}dx}
    \nonumber
\end{equation}
and every eigenvalue $\lambda$ is positive. Also eigenvalues have an explicit lower bound.
Using the Sobolev type inequality $\left\|\psi\right\|_{p}\le C\left\|D\psi\right\|_{p}$, where $C=C(n,p)$. We have
\begin{equation}
    \lambda\ge \frac{1}{C}.
    \nonumber
\end{equation}

Remark: There exists a nondecreasing sequence of nonnegative eigenvalues obtained by the Ljusternik-Schnirelman principle,
though it is not known whether there is or not a non Ljusternik-Schnirelman eigenvalue.


\section{p-superlinear case}
In this section, we firstly prove  the Palais-Smale condition for the functional $\mathfrak{L}$  with to $H^{1,p}(M,\mathbb{S}(M))$. Secondly, we use the Mountain Pass Theorem and Fountain Theorem to prove theorem \ref{th1.1} and \ref{th1.2} .

\subsection{The Palais-Smale condition}
In the following, we give the definition of Palais-Smale(denote PS) condition:
\begin{definition}\label{def3.1}
Let  $C^{1} $-functional $L$ be defined on a Banach space $E$. A sequence $\{x_n\}_{n=1}^\infty\subset E$ is called a $(PS)$ sequence if
\begin{enumerate}
\item[$(1)$] $L\left(x_{n}\right)$ is bounded in E;
\item[$(2)$] $\left \| \mathrm{d}L\left ( x_{n} \right ) \right \|_{E^* } \to 0 $ as $n\to \infty$.
\end{enumerate}

If all $(PS)$ sequence contain a convergent subsequence in $E$, we say that $L$ satisfies the $(PS)$ condition with respect to $E$.
\end{definition}

\begin{definition}\label{def3.2}
Let  $C^{1} $-functional $L$ be defined on a Banach space $E$, $c\in \mathbb{R}$. A sequence $\{x_n\}_{n=1}^\infty\subset E$ is called a $(PS)_c$ sequence if
\begin{enumerate}
\item[$(1)$] $L\left ( x_{n} \right )\to c$ as $n\to \infty$;
\item[$(2)$] $\left \| \mathrm{d}L\left ( x_{n} \right ) \right \|_{E^* } \to 0 $ as $n\to \infty$.
\end{enumerate}
If all $(PS)_c$ sequence contain a convergent subsequence in $E$, we say that $L$ satisfies the $(PS)_c$  condition with respect to $E$.
\end{definition}

Remark: It is easy to show that the $(PS)$ condition is equivalent to the  $(PS)_c$ condition for any $c\in \mathbb{R}$.

We then have:
\begin{lemma}\label{lemma3.3}
Suppose $H$ satisfies $\left(H1\right)-\left(H3\right)$,
Then functional $\mathfrak{L}$ satisfies the Palais-Smale condition with to $H^{1,p}(M,\mathbb{S}(M))$.
\end{lemma}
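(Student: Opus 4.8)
The plan is to show that any Palais-Smale sequence $\{\psi_n\}\subset H^{1,p}(M,\mathbb{S}(M))$ is bounded, and then upgrade weak convergence of a subsequence to strong convergence by exploiting the strict monotonicity of the $p$-Dirac operator established in Lemma \ref{lemma2.8} together with the compact Sobolev embeddings. This is the standard two-step scheme for quasilinear variational problems, and the assumptions $(H1)$--$(H3)$ are tailored precisely to make each step work.

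**First I would establish boundedness.** Let $\{\psi_n\}$ satisfy $\mathfrak{L}(\psi_n)\to c$ and $\|\mathrm{d}\mathfrak{L}(\psi_n)\|_{X^*}\to 0$. The key algebraic manipulation is to form the combination $\mathfrak{L}(\psi_n)-\frac{1}{\mu}\langle\mathrm{d}\mathfrak{L}(\psi_n),\psi_n\rangle$, where $\mu>p$ is the constant from $(H2)$. Using \eqref{eq2.7}, the $p$-Dirac terms combine to give $\left(\frac{1}{p}-\frac{1}{\mu}\right)\|\psi_n\|_{1,p}^p$, which has a positive coefficient since $\mu>p$. The nonlinear terms give $\int_M\left(\frac{1}{\mu}\langle H_\psi(x,\psi_n),\psi_n\rangle - H(x,\psi_n)\right)dx$, which by $(H2)$ is nonnegative on the region $\{|\psi_n|\ge R_1\}$ and is controlled by a constant (depending on $M$ and $\sup_{|\psi|\le R_1}$ of the relevant quantities) on the complementary region. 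On the left-hand side, $\mathfrak{L}(\psi_n)\le c+1$ for large $n$ and $\frac{1}{\mu}|\langle\mathrm{d}\mathfrak{L}(\psi_n),\psi_n\rangle|\le \frac{1}{\mu}\|\mathrm{d}\mathfrak{L}(\psi_n)\|_{X^*}\|\psi_n\|_{1,p}\le \|\psi_n\|_{1,p}$ for large $n$. Collecting terms yields an estimate of the form $\left(\frac{1}{p}-\frac{1}{\mu}\right)\|\psi_n\|_{1,p}^p\le C_1+C_2\|\psi_n\|_{1,p}$, and since $p>1$ the superlinear left-hand side forces $\|\psi_n\|_{1,p}$ to stay bounded.

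**Next I would extract a convergent subsequence.** Since $X=H^{1,p}(M,\mathbb{S}(M))$ is reflexive (as noted in the excerpt), boundedness yields a subsequence with $\psi_n\rightharpoonup\psi$ weakly in $X$, and by the compact embedding $X\hookrightarrow L^q(M,\mathbb{S}(M))$ for $q<p^*$ (here $(H1)$ fixes such a subcritical $q$) we get $\psi_n\to\psi$ strongly in $L^q$ and $L^p$. Writing $B=G'$ as in \eqref{eq2.14}, I would examine $\langle B\psi_n-B\psi,\psi_n-\psi\rangle$. Using \eqref{eq2.7}, $\langle B\psi_n,\psi_n-\psi\rangle = \langle\mathrm{d}\mathfrak{L}(\psi_n),\psi_n-\psi\rangle+\int_M\langle H_\psi(x,\psi_n),\psi_n-\psi\rangle dx$; the first term tends to $0$ because $\|\mathrm{d}\mathfrak{L}(\psi_n)\|_{X^*}\to 0$ while $\{\psi_n-\psi\}$ is bounded, and the second tends to $0$ because $(H1)$ makes $H_\psi(x,\psi_n)$ bounded in $L^{q/(q-1)}$ while $\psi_n-\psi\to 0$ in $L^q$ (Hölder). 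Likewise $\langle B\psi,\psi_n-\psi\rangle\to 0$ by weak convergence. Hence $\langle B\psi_n-B\psi,\psi_n-\psi\rangle\to 0$, and the inequality of Lemma \ref{lemma2.8} forces $\|\psi_n\|_{1,p}\to\|\psi\|_{1,p}$. Combined with weak convergence in a space that can be renormed to be locally uniformly convex (the Troyanski argument invoked in Lemma \ref{lemma2.9}), this gives $\psi_n\to\psi$ strongly in $X$, completing the verification of the $(PS)$ condition.

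**The main obstacle** is the boundedness step: unlike the Hilbert case $p=2$, the functional $\mathfrak{L}$ is strongly indefinite in the sense flagged in the introduction, so one must be careful that the single coercive term $\frac{1}{p}\|\psi\|_{1,p}^p$ genuinely dominates after the Ambrosetti-Rabinowitz combination, and that the subcritical growth in $(H1)$ together with $(H2)$ is used consistently. The passage from $\langle B\psi_n-B\psi,\psi_n-\psi\rangle\to 0$ to norm convergence is routine given Lemma \ref{lemma2.8}, and the strong-convergence upgrade is exactly the local-uniform-convexity argument already deployed for condition $(S_0)$, so the real work is confined to the a priori bound.
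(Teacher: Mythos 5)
Your proposal is correct, and its first half (boundedness) coincides with the paper's argument: the same Ambrosetti--Rabinowitz combination $\mathfrak{L}(\psi_n)-\frac{1}{\mu}\langle \mathrm{d}\mathfrak{L}(\psi_n),\psi_n\rangle$, the same splitting of $M$ into $\{|\psi_n|\ge R_1\}$ and its complement with $(H2)$ and $(H1)$ respectively, and the same superlinear-versus-linear comparison $\frac{\mu-p}{p\mu}\|\psi_n\|_{1,p}^p\le C+o(1)\|\psi_n\|_{1,p}$. The second half genuinely diverges, and in your favor. After establishing $\int_M\langle \psi_n-\psi, D_p\psi_n\rangle dx\to 0$ (both you and the paper get this from \eqref{eq3.22}--\eqref{eq3.23}), the paper jumps directly to \eqref{eq3.24}, asserting $\|\psi_n-\psi\|_{1,p}^p=\int_M\langle\psi_n-\psi,D_p(\psi_n-\psi)\rangle dx=o(1)$; but since $D_p$ is nonlinear for $p\ne 2$, one cannot replace $D_p\psi_n$ by $D_p(\psi_n-\psi)$ in the pairing, so as written that identity is only justified in the Hilbert case $p=2$. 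You instead subtract $\langle B\psi,\psi_n-\psi\rangle\to 0$ (free from weak convergence), conclude $\langle B\psi_n-B\psi,\psi_n-\psi\rangle\to 0$, invoke the monotonicity inequality of Lemma \ref{lemma2.8} to get $\|\psi_n\|_{1,p}\to\|\psi\|_{1,p}$, and then upgrade to strong convergence by the same Troyanski renorming/Radon--Riesz argument the paper already uses in Lemma \ref{lemma2.9} for condition $(S_0)$. This is exactly the repair the paper's \eqref{eq3.24} needs, at the modest cost of routing through Lemmas \ref{lemma2.8} and \ref{lemma2.9}; what it buys is a proof valid for all $p>1$ rather than an identity that silently presupposes linearity of $D_p$. (A still shorter variant of your last step: with $D$ invertible, $\psi\mapsto D\psi$ embeds $H^{1,p}$ isometrically into $L^p(M,\mathbb{S}(M))$, which is uniformly convex, so norm convergence plus weak convergence already gives strong convergence without the renorming theorem.)
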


\begin{proof}
 Let $\left ( \psi_{n} \right )\subset H^{1,p}(M,\mathbb{S}(M))$ be a Palais-Smale sequence, $i.e.$,
the following conditions are satisfied:
\begin{equation}\label{eq3.18}
   \mid \mathfrak{L}\left ( \psi _{n}  \right ) \mid\leq C,
\end{equation}
and
\begin{equation}\label{eq3.19}
    \left \| \mathrm{d}\mathfrak{L}  \left ( \psi _{n}  \right )   \right \| _*\to 0 \quad as \ n\to\infty.
\end{equation}
We first show that $\left\{\psi_{n}\right\}\subset H^{1,p}(M,\mathbb{S}(M))$ is bounded.

By \eqref{eq3.19}, for large $n$, for any $\phi\in H^{1,p}(M,\mathbb{S}(M))$, we obtain
$$
    \langle\mathrm{d}\mathfrak{L} \left ( \psi _{n} \right ),\phi\rangle=    \int_{M}(\langle|D\psi_n|^{p-2}D\psi_n,D\phi\rangle-\langle h(x,\psi_n),\phi\rangle)dx=o(1)\|\phi\|_{1,p}.
$$
Let $\phi= \psi _{n}$, we get that
\begin{equation}\label{eq3.20}
    \left \langle \mathrm{d}\mathfrak{L}(\psi_{n}) ,\psi_{n}\right \rangle=\int_{M}(|D\psi_n|^p-\langle h(x,\psi_n),\psi_n\rangle)dx=o(1)\|\psi_n\|_{1,p}.
\end{equation}
Toghter with \eqref{eq3.18}, we can conclude that

\begin{align}\label{eq3.21}
    &C+o(1)\|\psi_n\|_{1,p}
    \ge \mathfrak{L}(\psi_{n})- \frac{1}{\mu}\left \langle \mathrm{d}\mathfrak{L}(\psi_{n}) ,\psi_{n}\right \rangle
    \nonumber\\
    =&\frac{\mu-p}{p\mu}\int_{M} |D\psi_n|^pdx+\int_{\{x\in M:|\psi_n(x)|\geq R_1\}}(\frac{1}{\mu}\langle h(x,\psi_n),\psi_n\rangle-H(x,\psi_n))dx
    \nonumber\\
    +&\int_{\{x\in M:|\psi_n(x)|< R_1\}}(\frac{1}{\mu}\langle h(x,\psi_n),\psi_n\rangle-H(x,\psi_n))dx
\end{align}
Using condition $(H1)$, there exist $R_1>0$ and $C_4>0$ such that $\frac{1}{\mu}\langle h(x,\psi_n),\psi_n\rangle-H(x,\psi_n)\geq-C_4$
with $|\psi_n|<R_1$. Together \eqref{eq3.21} and condition $(H2)$, we then have
$$
\frac{\mu-p}{p\mu}\int_{M} |D\psi_n|^pdx\leq C+o(1)\|\psi_n\|_{1,p}+C_4.
$$
This prove that $\left\{\psi_{n}\right\}\subset H^{1,p}(M,\mathbb{S}(M))$ is bounded.

Since $\left\{\psi_{n}\right\}\subset H^{1,p}(M,\mathbb{S}(M))$ is bounded, after passing to a subsequence,
we may assume that there exists $\psi\in H^{1,p}(M,\mathbb{S}(M))$ such that $\psi_{n}\rightharpoonup \psi$ weakly in $H^{1,p}(M,\mathbb{S}(M))$
and $\psi_{n}\to \psi$ strongly in $L^{r}(M)$ for any $1\le r<\frac{mp}{m-p}$.

By \eqref{eq3.19}
and the boundedness of $\left\{\psi_{n}\right\}$ in $H^{1,p}(M,\mathbb{S}(M))$, we have
\begin{align}
    o(1)=&\left\langle\mathrm{d}\mathfrak{L}(\psi_{n}),\psi_{n}-\psi\right\rangle
    \nonumber\\\label{eq3.22}
    =&\int_{M}\left\langle\psi_{n}-\psi,D_{p}\psi_{n}\right\rangle dx-\int_{M}\left\langle\psi_{n}-\psi,H_{\psi}(x,\psi_{n})\right\rangle dx
\end{align}
Here, by $(H1)$ we have
\begin{align}
    \left| \int_{M}\left\langle\psi_{n}-\psi,H_{\psi}(x,\psi_{n})\right\rangle dx\right|
    \le&C_1\int_{M}(1+|\psi_{n}|^{q-1})|\psi_{n}-\psi|dx
    \nonumber\\
    \le&C_5\left(\left\|\psi_{n}-\psi\right\|_{1}+\left\|\psi_{n}\right\|_{q}^{q-1}\left\|\psi_{n}-\psi\right\|_{q}\right)
    \nonumber\\\label{eq3.23}
    =&o(1)
\end{align}
as $n\to\infty$.

By \eqref{eq3.22} and \eqref{eq3.23}, we have
\begin{equation}\label{eq3.24}
    \left\|\psi_{n}-\psi\right\|_{1,p}^{p}=\int_{M}\left\langle\psi_{n}-\psi,D_{p}(\psi_{n}-\psi)\right\rangle\mathrm{d}x=o(1)
\end{equation}
as $n\to \infty$.

Hence, we have $\psi_{n}\to \psi$ in $H^{1,p}(M,\mathbb{S}(M))$. This completes the verification of the Palais-Smale condition.
\end{proof}

\subsection{proof of Theorem \ref{th1.1}}

The proof of Theorem \ref{th1.1} in the introduction is based on an application of the following theorem:

\begin{theorem}\label{th3.4}
(Mountain Pass Theorem). Let $E$ be a Banach space and  $I\in C^1(E,\mathbb{R})$. Suppose that
\begin{flalign*}
(I_1)&\text{there exist}\ r>0\ and\  \rho>0\ \text{such that}\ I(u)\geq \rho\ if\ u\in S_r=\{u\in E:\|u\|=r\};&\\
(I_2)&\text{there exists}\ e\in E\ \text{with}\ \|e\|>r\ \text{such that}\ I(e)\leq 0;&
\end{flalign*}
we set
\begin{equation}
        c=\inf_{\gamma\in \Gamma} \max_{t\in[0,1]} I(\gamma(t)).
        \nonumber
\end{equation}
where $\Gamma=\{\gamma\in C([0,1],E):\gamma(0)=0,\gamma(1)=e\}.$

Then $I$ have a $(PS)_c$ sequence. Moreover, if $I$ satisfy $(PS)_c$ condition,
then $I$ possesses a critical value $c$.
\end{theorem}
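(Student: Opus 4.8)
The plan is to follow the classical minimax/deformation argument of Ambrosetti and Rabinowitz. First I would verify that the minimax value $c$ is well-defined, finite and strictly positive. The upper bound $c<\infty$ follows by testing with the straight segment $\gamma_0(t)=te\in\Gamma$, which gives $c\le\max_{t\in[0,1]}I(te)<\infty$ by continuity of $I$. For the lower bound, the key observation is a topological linking: every path $\gamma\in\Gamma$ joins $\gamma(0)=0$, lying inside the ball $B_r$, to $\gamma(1)=e$ with $\|e\|>r$, lying outside; since $t\mapsto\|\gamma(t)\|$ is continuous, there is $t_\gamma\in(0,1)$ with $\|\gamma(t_\gamma)\|=r$, i.e. $\gamma(t_\gamma)\in S_r$. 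Hypothesis $(I_1)$ then yields $\max_{t}I(\gamma(t))\ge I(\gamma(t_\gamma))\ge\rho$, and taking the infimum over $\gamma\in\Gamma$ gives $c\ge\rho>0$. In particular $c>\max\{I(0),I(e)\}$ (using $I(e)\le0$ and, in the application, $I(0)=0$), which is what makes the endpoints of every competitor path lie strictly below level $c$.

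Next I would establish the existence of a $(PS)_c$ sequence by a deformation argument run by contradiction. Suppose no $(PS)_c$ sequence exists; then there are constants $\alpha,\varepsilon>0$ such that $\|dI(u)\|_{E^*}\ge\alpha$ for all $u$ in the strip $\{u:|I(u)-c|\le 2\varepsilon\}$, and we may shrink $\varepsilon$ so that $c-2\varepsilon>\max\{I(0),I(e)\}$. Because $E$ is only a Banach space, the gradient flow is unavailable, so I would invoke the existence of a locally Lipschitz pseudo-gradient vector field for $I$ on its regular set; integrating a suitably truncated negative pseudo-gradient field produces a continuous deformation $\eta:[0,1]\times E\to E$ with $\eta(0,\cdot)=\mathrm{id}$, which fixes all points where $I$ lies outside $(c-2\varepsilon,c+2\varepsilon)$ and satisfies $I(\eta(1,u))\le c-\varepsilon$ whenever $I(u)\le c+\varepsilon$. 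Choosing, by the definition of $c$, a path $\gamma\in\Gamma$ with $\max_t I(\gamma(t))\le c+\varepsilon$ and setting $\tilde\gamma:=\eta(1,\gamma(\cdot))$, the deformation fixes the endpoints (since $I(0),I(e)<c-2\varepsilon$), so $\tilde\gamma\in\Gamma$, yet $\max_t I(\tilde\gamma(t))\le c-\varepsilon<c$, contradicting the definition of $c$. Hence a $(PS)_c$ sequence exists. An alternative route avoiding the flow is to apply Ekeland's variational principle to $\Phi(\gamma)=\max_t I(\gamma(t))$ on the complete metric space $\Gamma$ and read off a $(PS)_c$ sequence from the near-minimizing paths.

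Finally, the ``moreover'' part is immediate: if $I$ satisfies $(PS)_c$, take a $(PS)_c$ sequence $\{u_n\}$; by the $(PS)_c$ condition it has a subsequence converging to some $u\in E$. Continuity of $I$ gives $I(u)=\lim_n I(u_n)=c$, and continuity of the Fréchet derivative $dI$ gives $\|dI(u)\|_{E^*}=\lim_n\|dI(u_n)\|_{E^*}=0$, so $u$ is a critical point with $I(u)=c$; thus $c$ is a critical value.

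I expect the main obstacle to be the construction of the deformation $\eta$ in the Banach (non-Hilbert) setting: one cannot simply flow along $-\nabla I$, so the argument hinges on the pseudo-gradient lemma and on arranging the truncation so that the flow stays inside the strip, decreases $I$ at a definite rate, and leaves the endpoints $0$ and $e$ fixed, so that the deformed path remains an admissible competitor in $\Gamma$.
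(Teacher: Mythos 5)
Your proof is correct and follows the canonical Ambrosetti--Rabinowitz/Willem route (minimax value bounded via the crossing of $S_r$, a quantitative deformation built from a pseudo-gradient field to produce the $(PS)_c$ sequence, with Ekeland's principle as an alternative); the paper itself states Theorem~\ref{th3.4} without proof, treating it as classical, so your argument simply supplies the standard proof and there is no divergence to report. The only delicate point --- that fixing the endpoints of the deformed path requires $c>\max\{I(0),I(e)\}$, which the bare hypotheses $(I_1)$--$(I_2)$ ensure only when $I(0)<\rho$, as indeed holds in the paper's application since $\mathfrak{L}(0)=0$ --- is one you already flag correctly yourself.
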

First step: we give the condition $(I_1)$ of Mountain Pass Theorem in the following lemma:
\begin{lemma}\label{lemma3.5}
Assume that $H$ satisfies $(H1)$ and $(H3)$ in the introduction. Then there exist $r, \rho>0$ such that $I(\psi)\le0$ with $\psi\in S_r=\{\psi\in H^{1,p}(M,\mathbb{S}(M)):\|\psi\|_{1,p}=r\}$.
\end{lemma}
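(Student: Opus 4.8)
The plan is to read the conclusion $I(\psi)\le 0$ (here $I=\mathfrak{L}$) as the \emph{descent} half of the mountain-pass geometry: rather than asking it to hold for every $\psi\in S_r$ (which is impossible, since a highly oscillatory spinor can keep $\int_M H$ bounded while $\|\psi\|_{1,p}$ is large), I will exhibit a single spinor $\psi$ of large norm lying on some sphere $S_r$ and satisfying $\mathfrak{L}(\psi)\le 0$, so that this $\psi$ may serve as the endpoint $e$ in Theorem~\ref{th3.4}. I stress that the sign $\le 0$ cannot be produced by the upper-bound pair $(H1),(H3)$ alone, since those only force $\mathfrak{L}$ to be strictly \emph{positive} on small spheres; the operative hypothesis here is the Ambrosetti--Rabinowitz condition $(H2)$, which I will use throughout. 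The computation collapses to a single ray because, by definition of the working norm, $\|\psi\|_{1,p}^p=\int_M|D\psi|^p\,dx$, so along $t\mapsto t\psi_0$ with $\|\psi_0\|_{1,p}=1$ the kinetic term of $\mathfrak{L}$ is exactly $t^p/p$.

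First I would extract a superlinear lower bound for $H$ from $(H2)$. Fixing $x\in M$ and a unit fibre direction $\omega$, the function $s\mapsto H(x,s\omega)$ is positive for $s\ge R_1$ and satisfies $\frac{d}{ds}H(x,s\omega)=\frac1s\langle H_\psi(x,s\omega),s\omega\rangle\ge\frac{\mu}{s}H(x,s\omega)$; integrating this differential inequality from $R_1$ to $s$ yields $H(x,s\omega)\ge\bigl(\min_{x,\,|\omega|=1}H(x,R_1\omega)\bigr)R_1^{-\mu}s^\mu$. That minimum is a strictly positive constant by compactness of $M$, continuity of $H$, and the positivity built into $(H2)$. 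Combining this with the lower bound of the continuous function $H$ on the compact $R_1$-ball bundle over $M$, I obtain $H(x,\psi)\ge a|\psi|^\mu-b$ for all $(x,\psi)\in\mathbb{S}(M)$, with $a>0$, $b\ge 0$ and $\mu>p$.

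Then I would evaluate $\mathfrak{L}$ along the ray. Choosing any $\psi_0\not\equiv 0$ with $\|\psi_0\|_{1,p}=1$ and integrating the bound above,
\[
\mathfrak{L}(t\psi_0)=\frac{t^p}{p}-\int_M H(x,t\psi_0)\,dx\le\frac{t^p}{p}-a\,t^{\mu}\|\psi_0\|_{\mu}^{\mu}+b\,\mathrm{vol}(M).
\]
Since $\mu>p$ and $\|\psi_0\|_{\mu}>0$, the right-hand side tends to $-\infty$ as $t\to+\infty$, so there is $r>0$ large enough that $\mathfrak{L}(r\psi_0)\le 0$; taking $\psi:=r\psi_0\in S_r$ gives the asserted point, and any $\rho>0$ serves in the (vestigial) existential quantifier. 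I expect the only delicate point to be the uniformity of the constants $a,b$: the superlinear estimate is valid only on $\{|\psi|\ge R_1\}$, so I must absorb the sub-$R_1$ region — where $H$ is merely bounded, by continuity on a compact set — into the harmless additive term $b\,\mathrm{vol}(M)$. Finally, I record that the lemma's listed hypotheses $(H1),(H3)$ and its announced role as condition $(I_1)$ are in tension with the displayed inequality $I(\psi)\le 0$: the inequality as written is the superlinear descent condition, and it is this inequality, driven by $(H2)$, that the argument above establishes.
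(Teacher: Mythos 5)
You identified the misprint in the statement correctly, but you resolved it the wrong way, and as a result you proved a different statement from the one this lemma carries in the paper. The lemma is explicitly announced as supplying condition $(I_1)$ of the Mountain Pass Theorem~\ref{th3.4}, and the paper's own proof establishes exactly that: the displayed ``$I(\psi)\le 0$'' is a misprint for ``$\mathfrak{L}(\psi)\ge\rho>0$''. Three pieces of internal evidence settle the intended reading: the hypotheses are $(H1),(H3)$, i.e.\ precisely the pair controlling $H$ near $\psi=0$; the conclusion quantifies a $\rho>0$ which is vestigial under your reading (as you yourself admit) but is exactly the mountain-pass level under the paper's; and the very next lemma, Lemma~\ref{lemma3.6}, proves from $(H2)$ exactly the descent statement you constructed. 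So your proposal duplicates Lemma~\ref{lemma3.6} — including the integrated Ambrosetti--Rabinowitz bound $H(x,\psi)\ge a|\psi|^\mu-b$, which is the paper's \eqref{eq3.26} — while importing the hypothesis $(H2)$ that the present lemma does not grant. Your own observation that $(H1),(H3)$ cannot yield $\mathfrak{L}\le0$ anywhere (take $H\equiv0$, which satisfies both) should have pointed you to the other resolution of the typo rather than to strengthening the hypotheses.

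What is missing from your proposal is therefore the actual content of Lemma~\ref{lemma3.5}: the small-sphere positivity estimate. The paper argues as follows. By $(H3)$, for any $\varepsilon>0$ there is $\delta>0$ with $H(x,\psi)\le\varepsilon|\psi|^p$ for $|\psi|\le\delta$; by $(H1)$ (integrated in the fiber, with $q\in(p,p^*)$) one has $H(x,\psi)\le C_\varepsilon|\psi|^q$ for $|\psi|\ge\delta$, so altogether $H(x,\psi)\le\varepsilon|\psi|^p+C_\varepsilon|\psi|^q$ on $\mathbb{S}(M)$. The Sobolev embeddings $\|\psi\|_p\le C\|\psi\|_{1,p}$ and $\|\psi\|_q\le C\|\psi\|_{1,p}$ then give
\begin{equation*}
\mathfrak{L}(\psi)\ \ge\ \frac1p\|\psi\|_{1,p}^p-\varepsilon C_5\|\psi\|_{1,p}^p-C_\varepsilon C_6\|\psi\|_{1,p}^q\ \ge\ C_7\|\psi\|_{1,p}^p\bigl(1-C_8\|\psi\|_{1,p}^{q-p}\bigr),
\end{equation*}
and since $q>p$, choosing $r<(1/2C_8)^{1/(q-p)}$ yields $\mathfrak{L}(\psi)\ge\frac{C_7}{2}r^p=:\rho>0$ for all $\psi\in S_r$. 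Your ray computation is fine as a proof of Lemma~\ref{lemma3.6}, and your differential-inequality derivation of the lower bound for $H$ from $(H2)$ is correct; but it cannot substitute for the estimate above: without a strict positive lower bound on a small sphere separating $0$ from the endpoint $e$, the minimax level $c$ in Theorem~\ref{th3.4} is not known to be positive and the mountain-pass geometry needed for Theorem~\ref{th1.1} collapses.
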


\begin{proof}
Using the condition $(H1)$ and $(H3)$, for any $\varepsilon>0$, there exist $C_\varepsilon>0$ such that
$$
H(x,\psi)\leq\varepsilon|\psi|^p+C_\varepsilon|\psi|^q.
$$
Therefore, by the Sobolev embedding theorem and H$\ddot{o}$lder inequality, we get that
\begin{align}\label{eq3.25}
        \mathfrak{L}(\psi)\geq&\frac{1}{p}\int_{M}|D\psi|^pdx-\varepsilon\int_{M}|\psi|^pdx-C_\varepsilon\int_M|\psi|^qdx
     \nonumber\\
     \ge&\frac{1}{p}\left\|\psi\right\|_{1,p}^{p}-\varepsilon C_{5}\left\|\psi\right\|_{1,p}^{p} -C_\varepsilon C_6\left\|\psi\right\|_{1,p}^{q}
     \nonumber\\
     \ge& C_{7}\left\|\psi\right\|_{1,p}^{p}(1-C_{8}\left\|\psi\right\|_{1,p}^{q-p}).
    \end{align}
We choose $\left\|\psi\right\|_{1,p}:=r<(\frac{1}{2C_8})^{\frac{1}{q-p}}$ and use  \eqref{eq3.25} to lead to
$$
\mathfrak{L}(\psi)\geq \frac{C_{7}}{2}\left\|\psi\right\|_{1,p}^{p}>0
$$
We set $\rho=\frac{C_{7}}{2}\left\|\psi\right\|_{1,p}^{p}$, and this concludes the proof.
\end{proof}

Second step: we give the condition $(I_2)$ of Mountain Pass Theorem in the following lemma:

\begin{lemma}\label{lemma3.6}
    Assume that $H$ satisfies $(H2)$ in the introduction.  Then $ \mathfrak{L}$ satisfy the condition $(I_2)$ of Mountain Pass Theorem.
\end{lemma}
\begin{proof}
    By $(H2)$, there exists $C_9,C_{10}>0$ such that
    \begin{equation}\label{eq3.26}
       H(x,\psi)\geq C_9|\psi|^\mu -C_{10}
    \end{equation}
    for any $(x,\psi)\in \mathbb{S}(M)$.

    We take $e_0\in E$ and $|e_0|\neq 0$, which combines with \eqref{eq3.26}, we have
    \begin{align}
        \mathfrak{L}(te_0)=&\frac{1}{p}t^p\int_{M}|De_0|^pdx-\int_{M}H(x,te_0)dx
     \nonumber\\
     \leq &\frac{1}{p}t^p\int_{M}|De_0|^pdx-C_9t^\mu\int_M|e_0|^\mu dx+C_{10}.
    \end{align}
   It concludes  $\mathfrak{L}(te_0)\rightarrow -\infty$ as $t\rightarrow \infty$.

   Hence for $r$ in lemma \ref{lemma3.5}, there exist $t_0>0$ such that $ \mathfrak{L}(t_0e_0)\leq 0$ and $\|t_0e_0\|_{1,p}>r$.
Set $e=t_0e_0$, this completes the verification of the condition $(I_2)$ of Mountain Pass Theorem.
\end{proof}
We are now in the position to complete the proof of Theorem 1.1.

\textbf{ Proof of Theorem 1.1:}
Using lemma \ref{lemma3.3}, lemma \ref{lemma3.5} and lemma \ref{lemma3.6}, $\mathfrak{L}$ satisfy Mountain Pass Theorem. The proof of Theorem 1.1 is completed.

\subsection{Proof of Theorem \ref{th1.2}}
 We first recall the Founntain theorem, see \cite{Wil} for the detailed exposition.

Let $X$ be a Banach space with basis $\left\{e_{j}\right\}_{j=1}^{\infty}$, $i.e.$, $X=\overline{span \left\{e_{j}\right\}_{j=1}^{\infty}}$.
For $k\ge 1$, we set $Y_k=span\left\{e_{j}\right\}_{j=1}^{k}$ and $Z_k=\overline{span \left\{e_{j}\right\}_{j=k}^{\infty}}$.

The fountain theorem is stated as follows:
\begin{theorem}\label{th3.7}
    (Fountain theorem) Let $X=Y_k+Z_k$ be as above. suppose  $L$ satisfy $(PS)$ condition and $L\in C^{1}(H,\mathbb{R})$ is an even functional, $i.e.$, $L(-u)=L(u)$ for all $u\in H$. If for every $k\in \mathbb{N}$, there exists $\rho_k>r_k>0$ such that
 \begin{flalign*}
(L_1)&a(k):=\max \left\{\mathfrak{L}(\psi)|\psi\in Y_k,\left\|\psi\right\|_X=\rho_k\right\}\leq 0;&\\
(L_2)&b(k):=\inf \left\{\mathfrak{L}(\psi)|\psi\in Z_k,\left\|\psi\right\|_X=r_k\right\}\to\infty,\quad k\to +\infty.;&
\end{flalign*}
Then $L$ possesses an unbounded sequence of critical values.
\end{theorem}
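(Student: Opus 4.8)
The plan is to prove Theorem~\ref{th3.7} by the classical minimax scheme for even functionals, combining a Borsuk--Ulam type intersection property with an equivariant deformation lemma. Throughout I would write $B_k=\{\psi\in Y_k:\|\psi\|_X\le\rho_k\}$ for the solid ball in the finite dimensional space $Y_k$, with relative boundary $\partial B_k=\{\psi\in Y_k:\|\psi\|_X=\rho_k\}$, and $N_k=\{\psi\in Z_k:\|\psi\|_X=r_k\}$, so that $N_k$ is the sphere of the smaller radius $r_k<\rho_k$ appearing in $(L_2)$. Since $L\in C^1(X,\mathbb{R})$ is even and satisfies the $(PS)$ condition, I may work equivariantly with the $\mathbb{Z}_2$-action $\psi\mapsto-\psi$. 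First I introduce the odd minimax class
\begin{equation}
\Gamma_k=\{\gamma\in C(B_k,X):\gamma\ \text{is odd and}\ \gamma|_{\partial B_k}=\mathrm{id}\},
\nonumber
\end{equation}
and set $c_k=\inf_{\gamma\in\Gamma_k}\max_{\psi\in B_k}L(\gamma(\psi))$. The identity map lies in $\Gamma_k$, so $\Gamma_k\ne\emptyset$ and $c_k$ is well defined; condition $(L_1)$, namely $L\le 0$ on $\partial B_k$, is exactly what guarantees later that the boundary normalization is compatible with the deformation.

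The key step is the intersection lemma: for every $\gamma\in\Gamma_k$ one has $\gamma(B_k)\cap N_k\ne\emptyset$. I would prove this by a topological degree / Borsuk--Ulam argument. Using the Schauder basis, $X$ splits as a finite codimensional complement of $Z_k$ direct sum with $Z_k$; composing $\gamma$ with the continuous projection onto that complement yields an odd continuous map of the finite dimensional ball $B_k$ which coincides with a fixed map on $\partial B_k$, and Borsuk's theorem on odd maps forces the existence of $\psi_0\in B_k$ with $\gamma(\psi_0)\in Z_k$; a radius adjustment places the intersection on the sphere $N_k$. Consequently $\max_{\psi\in B_k}L(\gamma(\psi))\ge\inf_{N_k}L=b(k)$, and taking the infimum over $\gamma$ gives
\begin{equation}
c_k\ge b(k).
\nonumber
\end{equation}

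Next I would show that each $c_k$ (for $k$ large enough that $b(k)>0$) is a critical value of $L$. Arguing by contradiction, if $c_k$ is regular then the $(PS)$ condition together with the quantitative deformation lemma in its odd (equivariant) form produces an even homeomorphism $\eta$ of $X$, equal to the identity outside a small neighbourhood of the level $c_k$, which lowers $L$ from $c_k+\varepsilon$ below $c_k-\varepsilon$. Because $c_k\ge b(k)>0$ while $L\le 0$ on $\partial B_k$, the map $\eta$ fixes $\partial B_k$, so $\eta\circ\gamma\in\Gamma_k$ for any near-optimal $\gamma$; since $\eta\circ\gamma$ has strictly smaller maximal value, this contradicts the definition of $c_k$. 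Hence $c_k$ is a critical value, and as $(L_2)$ gives $c_k\ge b(k)\to\infty$, the numbers $c_k$ form an unbounded sequence of critical values, which is the assertion of the theorem.

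The main obstacle is the intersection lemma: the Borsuk--Ulam/degree computation must be carried out carefully for the finite dimensional odd map, and one must check that the equivariant deformation lemma is available in this Banach space setting (continuity and boundedness of the projections onto $Y_k$ and $Z_k$ furnished by the basis, and preservation of oddness by the pseudo-gradient flow). I expect the delicate bookkeeping point to be the compatibility between the boundary normalization $\gamma|_{\partial B_k}=\mathrm{id}$ guaranteed by $(L_1)$ and the requirement that the deformation leave the sublevel set $\{L\le 0\}$ fixed.
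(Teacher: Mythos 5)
The paper never proves Theorem~\ref{th3.7}: it is recalled as a known result with a pointer to \cite{Wil}, so there is no internal proof to compare against. Your proposal is, in substance, exactly the canonical proof from that reference (Bartsch's argument as presented in Willem): the odd minimax class $\Gamma_k$ over the ball $B_k\subset Y_k$ with the boundary normalization $\gamma|_{\partial B_k}=\mathrm{id}$, the Borsuk--Ulam intersection lemma giving $\gamma(B_k)\cap N_k\neq\emptyset$ and hence $c_k\ge b(k)$, and the equivariant quantitative deformation lemma, with evenness supplying an odd deformation $\eta$, the $(PS)$ condition supplying compactness at level $c_k$, and $(L_1)$ together with $c_k\ge b(k)>0$ (for $k$ large) guaranteeing that $\eta$ fixes $\partial B_k$ so that $\eta\circ\gamma\in\Gamma_k$. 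This outline is correct and complete in structure; since the paper simply cites the result, you have in effect reconstructed the proof of the cited source rather than taken a different route.

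One step deserves more precision than your sketch gives it, and it is exactly the point you flag as delicate. Applying Borsuk--Ulam to $P\circ\gamma$ on $\partial B_k$ itself does not work: there $\gamma=\mathrm{id}$, so a zero of the projection would land in $Y_k\cap Z_k=\mathbb{R}e_k$ at radius $\rho_k$, not on the sphere $N_k$ of radius $r_k$. The correct device is to set $U=\left\{\psi\in B_k:\left\|\gamma(\psi)\right\|_X<r_k\right\}$; since $\gamma$ is odd and continuous, $U$ is a bounded symmetric open neighborhood of $0$ in $Y_k$, and $\overline{U}\subset\mathrm{int}\,B_k$ because $\left\|\gamma(\psi)\right\|_X=\rho_k>r_k$ on $\partial B_k$. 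One then applies Borsuk--Ulam to the odd map $P_{k-1}\circ\gamma$ on $\partial U$, where $P_{k-1}$ is the (bounded, finite-rank) projection onto $Y_{k-1}$ along $Z_k$ furnished by the basis, using the splitting $X=Y_{k-1}\oplus Z_k$; note that $Z_k=\overline{\mathrm{span}}\left\{e_j\right\}_{j\ge k}$ has codimension $k-1$, strictly less than $\dim Y_k=k$, which is why the overlapping index convention in the statement matters (with $Z_{k+1}$ in its place the degree argument degenerates). This yields $\psi_0\in\partial U$ with $\gamma(\psi_0)\in Z_k$ and $\left\|\gamma(\psi_0)\right\|_X=r_k$, i.e.\ $\gamma(\psi_0)\in N_k$ --- the ``radius adjustment'' made precise. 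With this point filled in, and the observation (which the paper arranges via Proposition~\ref{pro3.10}) that one may take $\left\{e_n\right\}$ to be a Schauder basis so the projections are continuous, your argument is a correct proof of the statement.
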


Since $H^{1,p}(M,\mathbb{S}(M))$ is not a Hilbert space, We introduce the theory of biorthogonal system in (\cite{LiTz},p.42-43).
\begin{definition}\label{def3.8}
Let $E$ be a Banach space. $E^*$ is the dual space of $E$. A pair of sequences $\{x_n\}_{n=1}^\infty$ in $E$ and $\{x^*_n\}_{n=1}^\infty$ in $E^*$
is called a  biorthogonal system if $x^*_m(x_n)=\delta_n^m$. A sequence $\{x_n\}_{n=1}^\infty$ in $E$ is called a minimal system if there exists a sequence  $\{x^*_n\}_{n=1}^\infty$ in $E^*$ such that $(\{x_n\}_{n=1}^\infty,\{x^*_n\}_{n=1}^\infty)$ is a biorthogonal system.
\end{definition}
\begin{definition}\label{def3.9}
A minimal system $\{x_n\}_{n=1}^\infty$ in $E$ is called fundamental if $x^*(x_n)=0$ implies $x^*=0$ for all $n$; A minimal system $\{x_n^*\}_{n=1}^\infty$ in $E$ is called total if $x_n^*(x)=0$ implies $x=0$ for all $n$.
\end{definition}
In the following, we give the existence of biorthogonal system on separable Banach space:
\begin{proposition}\label{pro3.10}
Let $E$ be a separable Banach space. Then $E$ contains a fundamental minimal system whose biothogonal functionals are total.
\end{proposition}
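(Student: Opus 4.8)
The statement is Markushevich's theorem, and the natural route is a direct inductive construction of the biorthogonal system rather than any abstract existence argument. The plan is to use separability of $E$ in two ways at the outset: first to fix a sequence $\{y_n\}_{n=1}^\infty$ that is dense in $E$ (we may assume each $y_n\neq 0$), and second to produce a countable family $\{g_n\}_{n=1}^\infty\subset E^*$ that is total, i.e. separates the points of $E$. The latter is obtained by applying the Hahn-Banach theorem to the dense sequence: for each $n$ pick $g_n\in E^*$ with $\|g_n\|=1$ and $g_n(y_n)=\|y_n\|$; since $\{y_n\}$ is dense, a vector annihilated by every $g_n$ must be $0$. The goal is then to build a biorthogonal system $(\{x_n\},\{x_n^*\})$ with $x_m^*(x_n)=\delta_n^m$ such that $\overline{\mathrm{span}}\{x_n\}=E$ (which is exactly fundamentality under the definition above) and such that $\{x_n^*\}$ separates points (totality).

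The engine of the construction is a pair of dual one-step moves that extend a finite biorthogonal system $(x_1,\dots,x_N),(x_1^*,\dots,x_N^*)$ by one pair while preserving biorthogonality. To \emph{absorb a vector} $y$ not already in $\mathrm{span}\{x_1,\dots,x_N\}$, set $x_{N+1}=y-\sum_{i=1}^N x_i^*(y)\,x_i$; this is nonzero and annihilated by each $x_i^*$, and since $x_{N+1}\notin\mathrm{span}\{x_1,\dots,x_N\}$ the Hahn-Banach theorem supplies a functional $x_{N+1}^*$ vanishing on $x_1,\dots,x_N$ with $x_{N+1}^*(x_{N+1})=1$. Dually, to \emph{absorb a functional} $g$, set $x_{N+1}^*=g-\sum_{i=1}^N g(x_i)\,x_i^*$; when this is nonzero one chooses $z\in E$ with $x_{N+1}^*(z)\neq 0$ and puts $x_{N+1}=w/x_{N+1}^*(w)$ with $w=z-\sum_{i=1}^N x_i^*(z)\,x_i$, a short computation showing $x_i^*(x_{N+1})=0$ for $i\le N$ and $x_{N+1}^*(x_{N+1})=1$. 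In both moves the new index is orthogonal to all old ones and vice versa, so the enlarged system is again biorthogonal; crucially, $y\in\mathrm{span}\{x_i\}_{i\le N+1}$ in the first move and $g\in\mathrm{span}\{x_i^*\}_{i\le N+1}$ in the second.

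Finally I would interleave the two moves: at odd stages absorb the next $y_n$ not yet lying in the current vector span, and at even stages absorb the next $g_n$ not yet lying in the current functional span (skipping any target already captured). Letting $N\to\infty$ yields the infinite biorthogonal system. Fundamentality follows because every $y_n$ eventually lies in $\mathrm{span}\{x_j\}$, so density of $\{y_n\}$ forces $\overline{\mathrm{span}}\{x_j\}=E$; hence $x^*(x_j)=0$ for all $j$ gives $x^*=0$. Totality follows because every $g_n$ eventually lies in $\mathrm{span}\{x_j^*\}$, so $x_j^*(x)=0$ for all $j$ implies $g_n(x)=0$ for all $n$, whence $x=0$ by totality of $\{g_n\}$. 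The main obstacle is the bookkeeping in this interleaving: one must verify that each elementary move preserves biorthogonality of the \emph{entire} finite system (the dual Gram-Schmidt corrections are designed exactly for this) and that the skipping rule still allows every $y_n$ and every $g_n$ to be absorbed in finite time, so that neither density nor totality is lost in the limit.
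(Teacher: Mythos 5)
Your proof is correct: the interleaved Gram--Schmidt biorthogonalization of a dense sequence $\{y_n\}\subset E$ against a total sequence of norming functionals $\{g_n\}\subset E^*$, with the two absorption moves and the skipping rule you describe, is exactly the classical proof of Markushevich's theorem, and both fundamentality and totality do follow in the limit as you argue. The paper itself offers no proof of Proposition~\ref{pro3.10} --- it simply quotes the result from \cite{LiTz}, pp.~42--43 --- and your construction is essentially the argument given in that reference, so your proposal coincides with the approach the paper implicitly relies on.
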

Nowing using the proposition \ref{pro3.10}, since $H^{1,p}:=H^{1,p}(M,\mathbb{S}(M))$ is a separable Banach space, there exists a biorthogonal system $(e_m,e_n^*)_{m,n\in \mathbb{N}}$ such that $e_m$ in $H^{1,p}$, $e_n^*$ in $(H^{1,p})^*$, the $e_m$'s are linearly dense  in $H^{1,p}$ and the $e_n^*$'s  are total for $H^{1,p}$. Let us remark that we could in particular choose $\{e_n\}_{n=1}^\infty$ to be a Schauder basis for $H^{1,p}$, and then find biorthogonal functionals $\{e^*_n\}_{n=1}^\infty$. Denote
$$
H_k=span\{e_1,e_2,...,e_k\},\quad H_k^\perp=\overline{span\{e_{k+1},e_{k+2},...\}},
$$
and
$$
[e_1^*,e_2^*,...,e_k^*]^{\perp}=\{\psi\in H^{1,p}(M,\mathbb{S}(M))|e_1^*(\psi)=\cdot\cdot\cdot=e_k^*(\psi)=0\}.
$$
It is clear that
$$
H^{1,p}(M,\mathbb{S}(M))=H_k\oplus H_k^\perp,\quad H_k^\perp\subset [e_1^*,e_2^*,...,e_k^*]^{\perp}.
$$
First of all, we prove:

\begin{lemma}\label{lemma3.11}
    Assume $(H2)$ is satisfied. For each $k\ge 1$, there exists $\rho(k)>0$ large enough such that $\mathfrak{L}(\psi)\le 0$ provided $\psi \in E_k$ with $\left\|\psi\right\|_{1,p}\ge \rho(k)$.
\end{lemma}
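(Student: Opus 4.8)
The plan is to exploit two facts: that hypothesis $(H2)$ forces the potential $H$ to grow at least like $|\psi|^{\mu}$ with $\mu>p$, and that $E_k$ (which is the finite-dimensional space $Y_k=H_k=\mathrm{span}\{e_1,\dots,e_k\}$) carries equivalent norms. Together these show that $\mathfrak{L}$ is controlled from above by a negative term of order $\|\psi\|_{1,p}^{\mu}$ which eventually dominates the positive term of order $\|\psi\|_{1,p}^{p}$, forcing $\mathfrak{L}(\psi)\to-\infty$ along rays in $E_k$.

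First I would record the lower bound on $H$ already obtained in the proof of Lemma \ref{lemma3.6}: integrating the inequality in $(H2)$ produces constants $C_9,C_{10}>0$ with $H(x,\psi)\ge C_9|\psi|^{\mu}-C_{10}$ for all $(x,\psi)\in\mathbb{S}(M)$. Since $\|\psi\|_{1,p}^{p}=\int_M|D\psi|^p\,dx$, substituting this into the functional gives
$$
\mathfrak{L}(\psi)=\frac{1}{p}\|\psi\|_{1,p}^{p}-\int_M H(x,\psi)\,dx\le\frac{1}{p}\|\psi\|_{1,p}^{p}-C_9\int_M|\psi|^{\mu}\,dx+C_{10}\,\mathrm{vol}(M).
$$
Next, because $E_k$ is finite-dimensional, the norms $\|\cdot\|_{1,p}$ and $\|\cdot\|_{L^{\mu}(M)}$ are equivalent on $E_k$, so there is a constant $c_k>0$ (depending on $k$) with $\int_M|\psi|^{\mu}\,dx\ge c_k\|\psi\|_{1,p}^{\mu}$ for every $\psi\in E_k$. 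Hence, for $\psi\in E_k$,
$$
\mathfrak{L}(\psi)\le\frac{1}{p}\|\psi\|_{1,p}^{p}-C_9\,c_k\,\|\psi\|_{1,p}^{\mu}+C_{10}\,\mathrm{vol}(M).
$$
Since $\mu>p$, the right-hand side tends to $-\infty$ as $\|\psi\|_{1,p}\to\infty$; choosing $\rho(k)>0$ large enough that this upper bound is $\le 0$ whenever $\|\psi\|_{1,p}\ge\rho(k)$ completes the argument.

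The only subtle point --- and the reason the statement is phrased ``for each $k$'' with $\rho(k)$ depending on $k$ --- is the norm-equivalence step: the comparison constant $c_k$ between the $L^{\mu}$ and $H^{1,p}$ norms degenerates as $k\to\infty$, since this equivalence is a purely finite-dimensional phenomenon and fails on the full space $H^{1,p}(M,\mathbb{S}(M))$. This is exactly why the conclusion is uniform only on each fixed $Y_k$, matching the role of $(L_1)$ in the Fountain Theorem. Everything else is a direct estimate, so I expect no genuine obstacle beyond making the $k$-dependence of $\rho(k)$ explicit.
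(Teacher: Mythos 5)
Your proof is correct and follows essentially the same route as the paper: the lower bound $H(x,\psi)\ge C_9|\psi|^{\mu}-C_{10}$ derived from $(H2)$ (the paper's \eqref{eq3.26}), the finite-dimensional equivalence of the $L^{\mu}$ and $H^{1,p}$ norms on $H_k$, and the observation that $\mu>p$ makes the negative term dominate for $\|\psi\|_{1,p}$ large. No gaps; your remark on the $k$-dependence of the equivalence constant matches the paper's role for this lemma as condition $(L_1)$ of the Fountain Theorem.
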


\begin{proof}
    Using $(H2)$ and  \eqref{eq3.26}, we have
    \begin{align}\label{eq3.28}
        \mathfrak{L}(\psi)\le &\frac{1}{p}\int_{M}\left\langle\psi,D_{p}\psi\right\rangle dx-C_{9}\int_{M}|\psi|^{\mu}dx+C_{10}
        \nonumber\\
        = &\frac{1}{p}\left\|\psi\right\|_{1,p}^{p}-C_9\int_{M}|\psi|^{\mu}dx+C_{10}.
    \end{align}

    Since $\dim H_k= k<\infty$, the $L^{\mu}$ and the $H^{1,p}-$norms are equivalent on $H_k$, hence there exists a  $C(k)>0$ (depending only on $k$) such that
     $\left\|\psi\right\|_{1,p}\le C(k)\left\|\psi\right\|_{\mu}$ hold for any $\psi\in H_k$. By \eqref{eq3.28},
     we can conclude
     \begin{equation}\label{eq3.29}
         \mathfrak{L}(\psi) \le \frac{1}{p}\left\|\psi\right\|_{1,p}^{p}-C(k)\left\|\psi\right\|_{1,p}^{\mu}+C_{10}
     \end{equation}
     for any $\psi\in H_k$, where $C(k)>0$ is an another constant (we use the same notation for simplicity) depending only on $k$.

     From \eqref{eq3.29}, since $\mu>p$, we easily see that there exists $\rho(k)>0$ large enough such that $\mathfrak{L}(\psi)\le 0$
     provided $\psi\in H_k$ and $\left\|\psi\right\|_{1,p}\ge \rho(k)$.
\end{proof}

To proceed further, we need the follows:

\begin{lemma}\label{lemma3.12}
Let $E'$ be a Banach space, $E$ be a separable Banach space, The embedding $E\hookrightarrow E'$ is compact. Let $\tau_m=\inf_{u\in K_m}\|u\|_E$, where $K_m=\{u\in E|\|u\|_{E'}=1,u\in [x_1^*,x_2^*,...,x_m^*]^{\perp}\}$.
 We then have
 \begin{equation}
    \|u\|_{E'}\leq \tau_m^{-1}\|u\|_E,\quad \text{for any}\  u\in [x_1^*,x_2^*,...,x_m^*]^{\perp}\nonumber
 \end{equation}
 and
 $\tau_m\to +\infty$ as $m\to \infty$.
\end{lemma}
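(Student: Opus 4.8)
The plan is to prove the two assertions separately, both directly from the definition of $\tau_m$ as an infimum. First I would record two elementary preliminary facts. Since the embedding $E\hookrightarrow E'$ is compact it is in particular bounded, so there is a constant $C>0$ with $\|u\|_{E'}\le C\|u\|_E$ for all $u\in E$; restricting to $K_m$, where $\|u\|_{E'}=1$, this gives $\tau_m\ge C^{-1}>0$, so that $\tau_m^{-1}$ is well defined and finite. Moreover, because $[x_1^*,\dots,x_{m+1}^*]^{\perp}\subseteq[x_1^*,\dots,x_m^*]^{\perp}$ we have $K_{m+1}\subseteq K_m$, whence $\tau_m$ is nondecreasing in $m$ and its limit $\tau\in(0,+\infty]$ exists.

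For the inequality, fix $u\in[x_1^*,\dots,x_m^*]^{\perp}$; the case $u=0$ is trivial, so assume $u\ne 0$ (note $\|u\|_{E'}\ne0$ since the embedding is injective). Then $w:=u/\|u\|_{E'}$ satisfies $\|w\|_{E'}=1$, and, the annihilator being a linear subspace, $w\in[x_1^*,\dots,x_m^*]^{\perp}$, so that $w\in K_m$. By the definition of the infimum $\|w\|_E\ge\tau_m$, i.e. $\|u\|_E\ge\tau_m\|u\|_{E'}$, which is exactly the claimed bound $\|u\|_{E'}\le\tau_m^{-1}\|u\|_E$.

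It remains to prove $\tau_m\to+\infty$, and here I would argue by contradiction, supposing $\tau:=\lim_m\tau_m<+\infty$. For each $m$ I choose, using the definition of $\tau_m$, an element $u_m\in K_m$ with $\|u_m\|_E\le\tau_m+\tfrac1m\le\tau+1$, so that $\{u_m\}$ is bounded in $E$. Using that $E$ is reflexive (which is the case in our application, $E=H^{1,p}(M,\mathbb{S}(M))$), I pass to a subsequence with $u_m\rightharpoonup u$ weakly in $E$; the compactness of the embedding then upgrades this to $u_m\to u$ strongly in $E'$, whence $\|u\|_{E'}=\lim_m\|u_m\|_{E'}=1$ and in particular $u\ne0$. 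To reach the contradiction I fix $j$ and note that for every $m\ge j$ one has $x_j^*(u_m)=0$ because $u_m\in[x_1^*,\dots,x_m^*]^{\perp}$; since $x_j^*\in E^*$, weak convergence gives $x_j^*(u_m)\to x_j^*(u)$, so $x_j^*(u)=0$. As $j$ is arbitrary and the biorthogonal system $\{x_n^*\}$ is total, this forces $u=0$, contradicting $\|u\|_{E'}=1$. Hence $\tau<+\infty$ is impossible and $\tau_m\to+\infty$.

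The main obstacle is precisely the compactness step in the third paragraph: one must extract a \emph{nonzero} limit $u$ while retaining control of the linear conditions $x_j^*(u_m)=0$ in the limit. This is where reflexivity of $E$ is essential, since the weak limit must live inside $E$ in order for the functionals $x_j^*\in E^*$ to be applied to it, and it is where the compact embedding (to guarantee $u\ne0$) and the totality of the biorthogonal functionals (to conclude $u=0$) must be combined; without reflexivity the compact embedding alone would only furnish a strong limit in $E'$, on which the $x_j^*$ need not act.
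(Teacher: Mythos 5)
Your proof is correct and follows essentially the same route as the paper: argue by contradiction, use reflexivity of $E$ to extract a weakly convergent subsequence $u_m\rightharpoonup u$, use the compact embedding to get $\|u\|_{E'}=1$ (so $u\neq 0$), and use totality of $\{x_n^*\}$ to force $u=0$. The one difference is in how the conditions $x_j^*(u_m)=0$ are passed to the limit: the paper invokes Mazur's theorem to produce convex combinations converging strongly in $E$ and applies $x_i^*$ to those, whereas you apply $x_j^*$ directly, noting that weak convergence in $E$ means exactly $f(u_m)\to f(u)$ for every $f\in E^*$ --- your route is a genuine simplification, as the Mazur detour is superfluous. You also supply the normalization argument for the inequality $\|u\|_{E'}\le\tau_m^{-1}\|u\|_E$, which the paper treats as immediate, and you rightly flag that reflexivity of $E$ is used though it is absent from the lemma's hypotheses (the paper silently assumes it as well; it holds in the intended application $E=H^{1,p}(M,\mathbb{S}(M))$).
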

\begin{proof}
We only need to  prove $\tau_m\to +\infty$ as $m\to \infty$. Assume $\tau_m$ is bounded, then there exists a sequence $\{u_m\}$ with $\|u_m\|_{E'}=1$ and $u\in [x_1^*,x_2^*,...,x_m^*]^{\perp}$ such that $\|u_m\|_E\leq C$.

Since $E$ is reflexive, after passing to a subsequence, we may assume that there exists $u\in E$ such that $u_m\rightharpoonup u$ weakly in $E$. Using the embedding $E\hookrightarrow E'$ is compact, we have $u_m\rightarrow u$  in $E'$.

By Mazur theorem, if  $u_m\rightharpoonup u$ weakly in $E$, then we have
$$
\sum_{j=m}^{k_m}\alpha_j^{(m)}u_j\rightarrow u,\quad \text{in}\ E,
$$
where $\alpha_j^{(m)}\geq 0,\sum_{j=m}^{k_m}\alpha_j^{(m)}=1$, $m$ is an arbitrary positive integer.

for any $i$, we have
\begin{equation}\label{eq3.30}
x_i^*(u)=\lim_{m\rightarrow \infty}x_i^*(\sum_{j=m}^{k_m}\alpha_j^{(m)}u_j)=\lim_{m\rightarrow \infty}\sum_{j=m}^{k_m}\alpha_j^{(m)}x_i^*(u_j)=0.
\end{equation}
Since $\{x_n^*\}_{n=1}^\infty$ in $E$ is total, from \eqref{eq3.30}, we have $u=0$. However  we have been $\|u\|_{E'}=1$ by $\|u_m\|_{E'}=1$
and $u_m\rightarrow u$  in $E'$. This leads to contradiction.
\end{proof}

From Lemma \ref{lemma3.12}, we obtain the following:

\begin{lemma}\label{lemma3.13}
    Suppose $(H1)$  and $(H3)$ is satisfied. There exists $r(k)>0$ such that
    \begin{equation}
        b(k):=\inf \left\{\mathfrak{L}(\psi)|\psi\in H_k^\perp,\left\|\psi\right\|_{1,p}=r(k)\right\}\to+\infty
        \nonumber
    \end{equation}
    as $k\to \infty$.
\end{lemma}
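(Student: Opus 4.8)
The plan is to verify condition $(L_2)$ of the Fountain theorem by running the standard superlinear estimate, with the best constant of a compact embedding restricted to $H_k^\perp$ playing the role that decaying eigenvalues play in the Hilbert setting. First I would apply Lemma~\ref{lemma3.12} with $E=H^{1,p}(M,\mathbb{S}(M))$, $E'=L^{q}(M,\mathbb{S}(M))$, and the total system $\{e_n^*\}$, where $q\in(p,p^*)$ is the exponent supplied by $(H1)$. Because $q<p^*=\frac{mp}{m-p}$, the embedding $E\hookrightarrow E'$ is compact, so Lemma~\ref{lemma3.12} gives the inequality $\|\psi\|_q\le\tau_k^{-1}\|\psi\|_{1,p}$ for every $\psi\in H_k^\perp\subset[e_1^*,\dots,e_k^*]^\perp$, together with $\tau_k\to+\infty$. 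Writing $\beta_k:=\tau_k^{-1}$, the decisive gain is that $\beta_k\to 0$ as $k\to\infty$.

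Next I would convert the hypotheses into a usable pointwise bound. Exactly as in the proof of Lemma~\ref{lemma3.5}, conditions $(H1)$ and $(H3)$ yield, for each $\varepsilon>0$, a constant $C_\varepsilon>0$ with $H(x,\psi)\le\varepsilon|\psi|^p+C_\varepsilon|\psi|^q$ for all $(x,\psi)\in\mathbb{S}(M)$. Fixing $\psi\in H_k^\perp$ with $\|\psi\|_{1,p}=r$, integrating this bound, and combining it with the Sobolev inequality $\|\psi\|_p\le C_S\|\psi\|_{1,p}$ and the estimate from Lemma~\ref{lemma3.12}, I obtain
\[
\mathfrak{L}(\psi)\ge\frac{1}{p}r^p-\varepsilon C_S^{\,p}\,r^p-C_\varepsilon\beta_k^{\,q}\,r^q .
\]
Choosing $\varepsilon$ once and for all so small that $\varepsilon C_S^{\,p}\le\frac{1}{2p}$ absorbs the first correction, leaving $\mathfrak{L}(\psi)\ge\frac{1}{2p}r^p-C_\varepsilon\beta_k^{\,q}r^q$. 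I would then optimize in $r$: setting $r(k):=\bigl(4pC_\varepsilon\beta_k^{\,q}\bigr)^{-1/(q-p)}$ makes $C_\varepsilon\beta_k^{\,q}r(k)^{\,q-p}=\frac{1}{4p}$, so that
\[
b(k)\ge\inf_{\substack{\psi\in H_k^\perp\\ \|\psi\|_{1,p}=r(k)}}\mathfrak{L}(\psi)\ge\frac{1}{2p}r(k)^p-\frac{1}{4p}r(k)^p=\frac{1}{4p}r(k)^p .
\]
Since $q>p$ and $\beta_k\to0$, we have $r(k)\to+\infty$, whence $b(k)\to+\infty$, which is the claim.

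The only genuinely non-routine ingredient is the decay $\beta_k\to0$; once Lemma~\ref{lemma3.12} is available this is immediate, but it is precisely here that the non-Hilbert structure of $H^{1,p}$ matters, forcing us to replace orthogonal projections by the biorthogonal system and to invoke the totality of $\{e_n^*\}$ through the Mazur-theorem argument behind Lemma~\ref{lemma3.12}. The remaining ingredients --- the pointwise bound on $H$, the Sobolev estimate, and the elementary one-variable optimization in $r$ --- are routine, so I expect the actual write-up to be short.
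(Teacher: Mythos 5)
Your proposal is correct and follows essentially the same route as the paper: both rest on Lemma~\ref{lemma3.12} applied on $H_k^\perp$ to get a decaying embedding constant, the bound $H(x,\psi)\le\varepsilon|\psi|^p+C_\varepsilon|\psi|^q$ coming from $(H1)$ and $(H3)$, and a one-variable optimization in $r$ giving $b(k)\to+\infty$. The only (harmless) variation is that the paper also controls the $\varepsilon\int_M|\psi|^p\,dx$ term through Lemma~\ref{lemma3.12}, absorbing it via $\varepsilon\tau_k^{-p}\le\frac{1}{2p}$ for large $k$, and takes the exact maximizer $r(k)=\left(2qC_\varepsilon\tau_k^{-q}\right)^{\frac{1}{p-q}}$, whereas you absorb that term once and for all with the fixed Sobolev constant and use a near-optimal $r(k)$, which if anything makes the estimate valid for every $k$ rather than only for $k$ large.
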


\begin{proof}
Using the condition $(H1)$ and $(H3)$, for any $\varepsilon>0$, there exist $C_\varepsilon>0$ such that
\begin{equation}\label{eq3.31}
  H(x,\psi)\leq\varepsilon|\psi|^p+C_\varepsilon|\psi|^q.
\end{equation}

Therefore,  think of $L^p(M,\mathbb{S}(M))$ and $L^q(M,\mathbb{S}(M))$ as $E'$ in lemma\ref{lemma3.12}, by \eqref{eq3.31}, we have for $\psi \in H_k^\perp$ with $\left\|\psi\right\|_{1,p}=r:=r(k)$
    \begin{align}\label{eq3.32}
        \mathfrak{L}(\psi)=&\frac{1}{p}\int_{M}\left\langle\psi,D_{p}\psi\right\rangle dx-\int_{M}H(x,\psi)dx
        \nonumber\\
       \geq&\frac{1}{p}\int_{M}|D\psi|^pdx-\varepsilon\int_{M}|\psi|^pdx-C_\varepsilon\int_M|\psi|^qdx-C
        \nonumber\\
        \ge&\frac{1}{p}\|\psi\|_{1,p}^{p}-\varepsilon \tau_k^{-p} \|\psi\|_{1,p}^{p}-C_\varepsilon\tau_k^{-q} \|\psi\|_{1,p}^{q}-C
         \nonumber\\
        \ge&\frac{1}{p}r^{p}-\varepsilon \tau_k^{-p} r^{p}-C_\varepsilon\tau_k^{-q} r^{q}-C,
    \end{align}

   Again by Lemma \ref{lemma3.12}, we have $\varepsilon \tau_k^{-p}\le \frac{1}{2p}$ for large $k$ , we have from \eqref{eq3.32}
 \begin{equation}\label{eq3.33}
  \mathfrak{L}(\psi) \ge\frac{1}{2p}r^{p}-C_\varepsilon\tau_k^{-q} r^{q}-C.
\end{equation}
Hence
    \begin{equation}\label{eq3.34}
        b(k)=\inf \left\{\mathfrak{L}(\psi)|\psi\in H_k^\perp,\left\|\psi\right\|_{1,p}=r(k)\right\}\ge \frac{1}{2p}r^{p}-C_\varepsilon\tau_k^{-q} r^{q}-C.
    \end{equation}
    Since the maximum of the function $r\mapsto \frac{1}{2p}r^{p}-C_\varepsilon\tau_k^{-q} r^{q}$ is $\frac{q-p}{2pq}\left(2qC_\varepsilon\tau_k^{-q}\right)^\frac{p}{p-q}$ and is
    attained at $r=\left(2qC_\varepsilon\tau_k^{-q}\right)^\frac{1}{p-q}$, we take $r=\left(2qC_\varepsilon\tau_k^{-q}\right)^\frac{1}{p-q}$ and since $p<q$, we obtain

    \begin{align}
         b(k)=\inf \left\{\mathfrak{L}(\psi)|\psi\in H_k^\perp,\left\|\psi\right\|_{1,p}=r(k)\right\}
        \ge\frac{q-p}{2pq}\left(2qC_\varepsilon\tau_k^{-q}\right)^\frac{p}{p-q}-C\to +\infty
        \nonumber
    \end{align}
    as $k\to \infty$. This completes the proof.
\end{proof}

We are now in a position to complete the proof of Theorem 1.2.

\textbf{Proof of theorem 1.2:}
    Taking $\rho(k)$ large if necessary, we have $r(k)<\rho(k)$ for each $k$. By Lemmas\ref{lemma3.3}, Lemmas\ref{lemma3.11},
      Lemmas\ref{lemma3.13}, theorem \ref{th1.2} holds. Hence the existence of a sequence ${\psi_n}\subset H^{1,p}(M,\mathbb{S}(M))$ satisfying
$$
\mathfrak{L}(\psi_n)\rightarrow c_k,\quad \mathfrak{L}'(\psi_n)\rightarrow 0,
$$
where $c_k$ is a critical value of $\mathfrak{L}$. Since $c_k\geq b_k$ and $b_k\rightarrow +\infty$ as $k\rightarrow \infty$.
The proof is completed.

\section{p-sublinear case}
\subsection{The Palais-Smale condition}

First of all, we present the following special case of Minimax theorem:
\begin{theorem}\label{th4.1}
 Let $E$ be a Banach space and  $I\in C^1(E,\mathbb{R})$. If $I$ satisfy $(PS)$ condition and $I$ is bounded below on $E$,
then
$$
c:=\inf\{I(x):x\in E\}
$$
 is a critical value of $I$.
\end{theorem}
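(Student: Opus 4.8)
The plan is to realize $c$ as the critical value attained by the limit of a carefully chosen minimizing sequence. Since $I$ is bounded below on $E$, the quantity $c=\inf_{x\in E}I(x)$ is a finite real number, so the statement is at least meaningful. The naive approach — taking any minimizing sequence $\{x_n\}$ with $I(x_n)\to c$ and hoping that $\mathrm{d}I(x_n)\to 0$ — fails in general, because an arbitrary minimizing sequence carries no information about the derivative. The device that repairs this is Ekeland's variational principle.

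First I would invoke Ekeland's variational principle on the complete metric space $(E,\|\cdot\|)$: since $I$ is $C^1$ (hence lower semicontinuous) and bounded below, for each $n$ there exists a point $x_n\in E$ satisfying $I(x_n)\le c+\tfrac1n$ together with the approximate minimality $I(y)\ge I(x_n)-\tfrac1n\|y-x_n\|$ for all $y\in E$. Choosing $y=x_n+t h$ for a unit vector $h$ and letting $t\to 0^+$, this inequality forces $\langle \mathrm{d}I(x_n),h\rangle\ge -\tfrac1n$; since $h$ is arbitrary the same bound applies to $-h$, so $|\langle \mathrm{d}I(x_n),h\rangle|\le \tfrac1n$ for every unit vector, whence $\|\mathrm{d}I(x_n)\|_{E^*}\le \tfrac1n$. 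Thus $\{x_n\}$ is a sequence with $I(x_n)\to c$ and $\|\mathrm{d}I(x_n)\|_{E^*}\to 0$, i.e. a $(PS)_c$ sequence.

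Next I would apply the hypothesis that $I$ satisfies the $(PS)$ condition, which is equivalent to the $(PS)_c$ condition as noted in the Remark following Definition \ref{def3.2}. This provides a subsequence $x_{n_k}\to x$ for some $x\in E$. Passing to the limit and using the continuity of $I$ gives $I(x)=\lim_k I(x_{n_k})=c$, so the infimum is actually attained at $x$; using the continuity of $\mathrm{d}I$, which holds because $I\in C^1(E,\mathbb{R})$, gives $\mathrm{d}I(x)=\lim_k \mathrm{d}I(x_{n_k})=0$. Hence $x$ is a critical point of $I$ with $I(x)=c$, so $c$ is a critical value, as claimed.

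The main obstacle is the passage from an ordinary minimizing sequence to one that is also Palais--Smale; this is precisely the content of Ekeland's variational principle and is where all the work lies. Everything afterward — extracting a convergent subsequence via $(PS)$ and transferring the derivative to the limit by $C^1$ regularity — is routine. If one wished to avoid Ekeland's principle, an alternative would be a deformation argument: assuming $c$ were a regular value, one could use the $(PS)$ condition to build a deformation pushing a sublevel set strictly below $c$, contradicting the definition of the infimum; but the Ekeland route is shorter and cleaner in this setting.
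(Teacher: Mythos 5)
Your proposal is correct, and in fact it supplies an argument the paper omits entirely: Theorem \ref{th4.1} is stated there as a known ``special case of Minimax theorem'' and invoked without proof, so there is no in-paper argument to compare against. Your Ekeland route is the canonical one: the weak form of Ekeland's variational principle applies because $E$ is complete, $I$ is continuous (hence lower semicontinuous) and bounded below, and it yields $x_n$ with $c\le I(x_n)\le c+\tfrac1n$ and $I(y)\ge I(x_n)-\tfrac1n\|y-x_n\|$ for all $y$; your difference-quotient step with $y=x_n\pm th$, $\|h\|=1$, correctly converts the second inequality into $\|\mathrm{d}I(x_n)\|_{E^*}\le\tfrac1n$, producing a genuine $(PS)_c$ sequence rather than a bare minimizing sequence. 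The remaining steps --- extracting $x_{n_k}\to x$ via the $(PS)$ condition (equivalently $(PS)_c$, as the paper's remark after Definition \ref{def3.2} notes) and passing to the limit using continuity of $I$ and of $\mathrm{d}I$ --- are sound, and as a bonus your argument shows the infimum is attained, which is slightly more than the statement demands. Your closing observation is also accurate: a deformation-lemma proof by contradiction would work under the same hypotheses, but the Ekeland argument is shorter and is the one a reader would expect to find behind this statement.
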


In the following, we use a theorem to prove the theorem 1.3. Firstly, we give the Palais-Smale condition under p-sublinear case:
\begin{lemma}\label{lemma4.2}
Suppose $H$ satisfies $(Hi)$.
Then functional $\mathfrak{L}$ satisfies the Palais-Smale condition with to $H^{1,p}(M,\mathbb{S}(M))$.
\end{lemma}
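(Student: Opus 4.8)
The plan is to follow the same two-stage scheme as in Lemma \ref{lemma3.3} --- first boundedness of a Palais-Smale sequence, then upgrading weak convergence to strong convergence --- but to exploit that the sublinear growth $(Hi)$ makes the boundedness step essentially automatic. First I would record the integrated form of $(Hi)$: writing $H(x,\psi)-H(x,0)=\int_0^1\langle H_\psi(x,t\psi),\psi\rangle\,dt$ and inserting $(Hi)$ produces a constant $C>0$ with $|H(x,\psi)|\le C(1+|\psi|^q)$ for all $(x,\psi)\in\mathbb{S}(M)$, where $1<q<p$. Hence, by the continuous embedding $H^{1,p}(M,\mathbb{S}(M))\hookrightarrow L^q(M,\mathbb{S}(M))$ (valid since $q<p<p^*$),
\begin{equation*}
\mathfrak{L}(\psi)=\frac{1}{p}\|\psi\|_{1,p}^p-\int_M H(x,\psi)\,dx\ge \frac{1}{p}\|\psi\|_{1,p}^p-C\left(1+\|\psi\|_{1,p}^q\right).
\end{equation*}
If $\{\psi_n\}$ is a Palais-Smale sequence then $\mathfrak{L}(\psi_n)\le C$, so $\frac1p\|\psi_n\|_{1,p}^p\le C(1+\|\psi_n\|_{1,p}^q)+C$; since $q<p$, this forces $\{\psi_n\}$ to be bounded in $H^{1,p}(M,\mathbb{S}(M))$. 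Note that, unlike the superlinear case, boundedness here uses only the bound on $\mathfrak{L}(\psi_n)$ and not the derivative condition, so no Ambrosetti-Rabinowitz type hypothesis is required.

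By reflexivity I would then pass to a subsequence with $\psi_n\rightharpoonup\psi$ weakly in $H^{1,p}(M,\mathbb{S}(M))$ and, by the compact embedding, $\psi_n\to\psi$ strongly in $L^r(M,\mathbb{S}(M))$ for every $1\le r<p^*$. Testing $\mathrm{d}\mathfrak{L}(\psi_n)$ against $\psi_n-\psi$ gives $\langle \mathrm{d}\mathfrak{L}(\psi_n),\psi_n-\psi\rangle\to0$, because $\|\mathrm{d}\mathfrak{L}(\psi_n)\|_*\to0$ and $\{\psi_n-\psi\}$ is bounded. The nonlinear part is controlled by $(Hi)$, H\"older's inequality, and the strong $L^1$ and $L^q$ convergence, namely
\begin{equation*}
\left|\int_M\langle H_\psi(x,\psi_n),\psi_n-\psi\rangle\,dx\right|\le C_2\left(\|\psi_n-\psi\|_1+\|\psi_n\|_q^{q-1}\|\psi_n-\psi\|_q\right)\to 0.
\end{equation*}
Consequently $\langle B\psi_n,\psi_n-\psi\rangle=\int_M|D\psi_n|^{p-2}\langle D\psi_n,D(\psi_n-\psi)\rangle\,dx\to0$, where $B=G'$ is the operator from \eqref{eq2.14}, while $\langle B\psi,\psi_n-\psi\rangle\to0$ follows from the weak convergence $D\psi_n\rightharpoonup D\psi$ in $L^p$ together with $|D\psi|^{p-2}D\psi\in L^{p/(p-1)}$. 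Hence $\langle B\psi_n-B\psi,\psi_n-\psi\rangle\to0$.

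Finally I would convert this into norm convergence exactly as in Lemma \ref{lemma2.9}. By the monotonicity estimate of Lemma \ref{lemma2.8},
\begin{equation*}
0\le\left(\|\psi_n\|_{1,p}^{p-1}-\|\psi\|_{1,p}^{p-1}\right)\left(\|\psi_n\|_{1,p}-\|\psi\|_{1,p}\right)\le\langle B\psi_n-B\psi,\psi_n-\psi\rangle\to0,
\end{equation*}
and since $s\mapsto s^{p-1}$ is strictly increasing on $[0,\infty)$ this forces $\|\psi_n\|_{1,p}\to\|\psi\|_{1,p}$. Passing to the locally uniformly convex equivalent norm supplied by the Lindenstrauss-Asplund-Troyanski theorem (as already used for hypothesis $(F3)$), weak convergence together with convergence of the norms yields $\psi_n\to\psi$ strongly in $H^{1,p}(M,\mathbb{S}(M))$, which is the Palais-Smale condition. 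The main obstacle is precisely this last step: because $H^{1,p}(M,\mathbb{S}(M))$ is not a Hilbert space, one cannot invoke an $(S_+)$ argument through an inner product, and one must instead route strong convergence through the monotonicity of $B$ and the uniform-convexity renorming. In contrast to the superlinear Lemma \ref{lemma3.3}, the $q<p$ regime removes all difficulty from the boundedness step, so the entire weight of the proof rests on this convergence argument.
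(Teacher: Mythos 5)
Your proof is correct, and up through the boundedness and weak-convergence stages it coincides with the paper's own argument: both derive the integrated bound $|H(x,\psi)|\le C(1+|\psi|^{q})$ from $(Hi)$, obtain coercivity of $\mathfrak{L}$ from $q<p$ (the paper's \eqref{eq4.36}), conclude boundedness of a Palais--Smale sequence from the energy bound alone, and dispose of the nonlinear term via $C_2\left(\|\psi_n-\psi\|_{1}+\|\psi_n\|_{q}^{q-1}\|\psi_n-\psi\|_{q}\right)=o(1)$, which is the paper's \eqref{eq4.40}. Where you genuinely diverge is the final step. The paper passes directly to \eqref{eq4.41}, asserting $\|\psi_n-\psi\|_{1,p}^{p}=\int_M\langle\psi_n-\psi,D_p(\psi_n-\psi)\rangle\,dx=o(1)$; but $D_p$ is nonlinear, so this does not follow from \eqref{eq4.39} and \eqref{eq4.40} by subtraction, and the inequality implicitly needed, $\int_M|D\psi_n-D\psi|^{p}\,dx\le C\,\langle B\psi_n-B\psi,\psi_n-\psi\rangle$, is a vector inequality valid only for $p\ge 2$ and is nowhere justified in the text. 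Your route --- showing $\langle B\psi_n-B\psi,\psi_n-\psi\rangle\to 0$ (where $\langle B\psi,\psi_n-\psi\rangle\to 0$ simply because $B\psi$ is a fixed element of the dual), then invoking the monotonicity estimate of Lemma \ref{lemma2.8} to force $\|\psi_n\|_{1,p}\to\|\psi\|_{1,p}$, then upgrading to strong convergence via the renorming argument of Lemma \ref{lemma2.9} --- is exactly the $(S_0)$ machinery the paper built in Section 2, works uniformly for all $p>1$, and thus repairs the gap rather than reproducing it. One caveat you inherit from Lemma \ref{lemma2.9} itself: weak convergence plus convergence of the \emph{original} norms yields strong convergence through a locally uniformly convex equivalent norm only if the renormed norms also converge, which is not automatic; a cleaner justification is that $\psi\mapsto D\psi$ embeds $H^{1,p}(M,\mathbb{S}(M))$ isometrically into the uniformly convex space $L^{p}(M,\mathbb{S}(M))$, so $\|\cdot\|_{1,p}$ itself enjoys the Radon--Riesz property. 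Since the paper accepts the identical step in Lemma \ref{lemma2.9}, this is a refinement of your (and the paper's) argument, not a flaw in your proposal.
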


\begin{proof}
Using the condition $(Hi)$, there exist $C_{11},C_{12}>0$ such that
\begin{equation}\label{eq4.35}
  H(x,\psi)\leq C_{11}|\psi|^q+C_{12}.
\end{equation}
Therefore, by the Sobolev embedding theorem and H$\ddot{o}$lder inequality, we get that
\begin{align}\label{eq4.36}
        \mathfrak{L}(\psi)\geq&\frac{1}{p}\int_{M}|D\psi|^pdx-C_{11}\int_M|\psi|^qdx-C
     \nonumber\\
     \ge&\frac{1}{p}\left\|\psi\right\|_{1,p}^{p} -C_{11}\left\|\psi\right\|_{1,p}^{q}-C.
    \end{align}
 Since $1<q<p$,  we have that $\mathfrak{L}(\psi)\to \infty$ as $\|\psi\| \to \infty$. This proves that $\mathfrak{L}$ is coercive.

 Let $\{ \psi_{n}\}\subset H^{1,p}(M,\mathbb{S}(M))$ be a Palais-Smale sequence,i.e. $\{ \psi_{n}\}$ satisfied:
\begin{equation}\label{eq4.37}
   \mid \mathfrak{L}\left ( \psi _{n}  \right ) \mid\leq C,
\end{equation}
and
\begin{equation}\label{eq4.38}
    \left \| \mathrm{d}\mathfrak{L}  \left ( \psi _{n}  \right )   \right \| _*\to 0 \quad as \ n\to\infty.
\end{equation}
 By the coerciveness of $\mathfrak{L}$, we conclude that $\{\psi _{n}\}$ is bounded in $H^{1,p}(M,\mathbb{S}(M))$.

Using $\left\{\psi_{n}\right\}\subset H^{1,p}(M,\mathbb{S}(M))$ is bounded, after passing to a subsequence,
we may assume that there exists $\psi\in H^{1,p}(M,\mathbb{S}(M))$ such that $\psi_{n}\rightharpoonup \psi$ weakly in $H^{1,p}(M,\mathbb{S}(M))$
and $\psi_{n}\to \psi$ strongly in $L^{r}(M)$ for any $1\le r<\frac{mp}{m-p}$.

By \eqref{eq4.38}
and the boundedness of $\left\{\psi_{n}\right\}$ in $H^{1,p}(M,\mathbb{S}(M))$, we have
\begin{align}\label{eq4.39}
    o(1)=&\left\langle\mathrm{d}\mathfrak{L}(\psi_{n}),\psi_{n}-\psi\right\rangle
    \nonumber\\
    =&\int_{M}\left\langle\psi_{n}-\psi,D_{p}\psi_{n}\right\rangle dx-\int_{M}\left\langle\psi_{n}-\psi,H_{\psi}(x,\psi_{n})\right\rangle dx
\end{align}
Here, by $(Hi)$ we have
\begin{align}\label{eq4.40}
    \left| \int_{M}\left\langle\psi_{n}-\psi,H_{\psi}(x,\psi_{n})\right\rangle\mathrm{d}x\right|
    \le&C_2\int_{M}(1+|\psi_{n}|^{q-1})|\psi_{n}-\psi|dx
    \nonumber\\
    \le&C\left(\left\|\psi_{n}-\psi\right\|_{1}+\left\|\psi_{n}\right\|_{q}^{q-1}\left\|\psi_{n}-\psi\right\|_{q}\right)
    \nonumber\\
    =&o(1)
\end{align}
as $n\to\infty$.

By \eqref{eq4.39} and \eqref{eq4.40}, we have
\begin{equation}\label{eq4.41}
    \left\|\psi_{n}-\psi\right\|_{1,p}^{p}=\int_{M}\left\langle\psi_{n}-\psi,D_{p}(\psi_{n}-\psi)\right\rangle dx=o(1)
\end{equation}
as $n\to \infty$.

Hence, we have $\psi_{n}\to \psi$ in $H^{1,p}(M,\mathbb{S}(M))$. This completes the verification of the Palais-Smale condition.
\end{proof}

\subsection{Proof of Theorem \ref{th1.3}}
We are now in a position to complete the proof of Theorem 1.3.

\textbf{Proof of theorem 1.3:}
   By \eqref{eq4.36}, let function $f(r):=\frac{1}{p}r^{p} -Cr^{q}-C$. The minimum of the function $f(r)=\frac{1}{p}r^{p} -Cr^{q}-C$ is $\frac{q-p}{pq}(Cq)^{\frac{p}{p-q}}-C$ and is
    attained at $r=(Cq)^\frac{1}{p-q}$. This proves that $\mathfrak{L}$ is bounded below.

    Hence, according to lemma \ref{lemma4.2} and theorem \ref{th4.1}, $c:=\inf\{\mathfrak{L}(\psi):\psi\in H^{1,p}(M,\mathbb{S}(M))\}$ is a critical value of $\mathfrak{L}$, i.e., there exists a point $\psi_0\in H^{1,p}(M,\mathbb{S}(M))$ such that $d\mathfrak{L}(\psi_0)=0, \mathfrak{L}(\psi_0)=c$.
The proof is completed.

\subsection{Proof of Theorem \ref{th1.4}}
 We first recall the following critical point theorem:

Let $X$ be a Banach space with basis $\left\{e_{j}\right\}_{j=1}^{\infty}$, $i.e.$, $X=\overline{span \left\{e_{j}\right\}_{j=1}^{\infty}}$.
For $k\ge 1$, we set $Y_k=span\left\{e_{j}\right\}_{j=1}^{k}$ and $Z_k=\overline{span \left\{e_{j}\right\}_{j=k}^{\infty}}$.

The critical point theorem which is probably well-know is stated as follows:
\begin{theorem}\label{th4.3}
     Let $X=Y_k\oplus Z_{k+1}$ be as above. suppose $L\in C^{1}(X,\mathbb{R})$ is an even functional, $i.e.$, $L(-u)=L(u)$ for all $u\in X$ and
satisfies the following:
 \begin{flalign*}
(L_3)&\text{there exist}\ r(k)>0\ and\  a(k)\in \mathbb{R}\ \text{such that}\ \sup\{L(u):u\in Y_k,\|u\|=r(k)\}\leq a(k);&\\
(L_4)&\text{there exists}\ b(k)\in \mathbb{R}\  \text{such that}\ \inf\{L(u):u\in \mathbb{R}e_k\oplus Z_{k+1}\}\geq b(k).&
\end{flalign*}
Then we have(obviously) $b(k)\leq a(k)$. Assume further that $L$ satisfies $(PS_{c(k)})$ condition for any $c(k)\in[b(k),a(k)]$, then there exist critical  values of $L$ in $[b(k),a(k)]$.
\end{theorem}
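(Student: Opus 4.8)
The plan is to read Theorem~\ref{th4.3} as a symmetric (genus-based) minimax principle of Clark / dual-fountain type and to produce a \emph{single} critical value $c(k)$ trapped in $[b(k),a(k)]$ by exploiting the $\mathbb{Z}_2$-symmetry coming from the evenness of $L$. Throughout I write $S_k=\{u\in Y_k:\|u\|=r(k)\}$ and $Z_k:=\mathbb{R}e_k\oplus Z_{k+1}$, so that $X=Y_{k-1}\oplus Z_k$ and $\operatorname{codim}Z_k=k-1$, and I let $\gamma(\cdot)$ denote the Krasnoselskii genus already employed in the Ljusternik--Schnirelman part of the paper. The value $c(k)$ will be a genuine minimax over a class of symmetric sets, and criticality will follow from an odd deformation argument.

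First I would define the admissible class and the candidate level by
\[
\Gamma_k=\{A\subset X\setminus\{0\}:\ A\ \text{compact},\ A=-A,\ \gamma(A)\ge k\},\qquad
c(k):=\inf_{A\in\Gamma_k}\ \sup_{u\in A}L(u).
\]
Two elementary genus facts pin $c(k)$ into $[b(k),a(k)]$. Since $S_k$ is a sphere in the $k$-dimensional space $Y_k$ we have $\gamma(S_k)=k$, so $S_k\in\Gamma_k$; as $L\le a(k)$ on $S_k$ by $(L_3)$, this gives $c(k)\le\sup_{S_k}L\le a(k)$. Conversely, every $A\in\Gamma_k$ must meet $Z_k$: if $A\cap Z_k=\emptyset$, then the projection $P:X\to Y_{k-1}$ with kernel $Z_k$ restricts to a continuous odd map $A\to Y_{k-1}\setminus\{0\}\cong\mathbb{R}^{k-1}\setminus\{0\}$, forcing $\gamma(A)\le k-1$, a contradiction. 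Choosing $w\in A\cap Z_k$ and using $L\ge b(k)$ on $Z_k$ from $(L_4)$ yields $\sup_A L\ge L(w)\ge b(k)$, hence $c(k)\ge b(k)$. In particular $\Gamma_k\neq\emptyset$ and $b(k)\le c(k)\le a(k)$.

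Next I would establish criticality by contradiction. Suppose $c(k)$ is not a critical value of $L$. Because $c(k)\in[b(k),a(k)]$, the hypothesis furnishes the $(PS)_{c(k)}$ condition, so the equivariant deformation lemma (available since $L$ is even, which yields an \emph{odd} deformation) produces $\varepsilon>0$ and an odd homeomorphism $\eta:X\to X$ with $\eta(\{L\le c(k)+\varepsilon\})\subset\{L\le c(k)-\varepsilon\}$. Pick $A\in\Gamma_k$ with $\sup_A L\le c(k)+\varepsilon$. Since $\eta$ is an odd homeomorphism fixing the origin, $\eta(A)$ is again compact, symmetric, avoids $0$, and satisfies $\gamma(\eta(A))\ge\gamma(A)\ge k$, so $\eta(A)\in\Gamma_k$; by the definition of $c(k)$ this forces $\sup_{\eta(A)}L\ge c(k)$. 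But $\eta(A)\subset\{L\le c(k)-\varepsilon\}$ gives $\sup_{\eta(A)}L\le c(k)-\varepsilon$, a contradiction. Hence $c(k)$ is a critical value in $[b(k),a(k)]$.

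The genuine difficulty here is the symmetric bookkeeping rather than any single hard estimate. The crux is that the dimension count $\gamma(S_k)=k$ must sit exactly one above $\operatorname{codim}Z_k=k-1$, so that membership in $\Gamma_k$ simultaneously enforces the ceiling $a(k)$ (through $S_k$) and an unavoidable crossing of the floor $Z_k$ (through the genus intersection inequality); this is precisely what renders the ``wrong-way'' ordering $b(k)\le a(k)$ harmless, where a naive linking/saddle argument would fail. The second delicate point is that one must use the \emph{odd} deformation lemma and the invariance of the genus under odd homeomorphisms, together with the fact that $\eta$ fixes the origin so that $0\notin\eta(A)$ and $\Gamma_k$ is preserved. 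Finally, assuming $(PS)_c$ on the whole interval $[b(k),a(k)]$, rather than at one level, is exactly what frees the argument from having to locate $c(k)$ in advance.
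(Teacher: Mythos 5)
Your proof is correct, but there is nothing in the paper to compare it against: the paper states Theorem~\ref{th4.3} as ``probably well-known'' and uses it as a black box in the proof of Theorem~\ref{th1.4}, giving no proof at all. What you have written is precisely the standard Clark-type (genus) minimax argument that the paper implicitly relies on, and the delicate points are all handled correctly: $\mathbb{R}e_k\oplus Z_{k+1}$ coincides with the closed subspace $Z_k$ of codimension $k-1$ (sum of a closed and a finite-dimensional subspace is closed), so the projection onto $Y_{k-1}$ along it is continuous and the intersection property $A\cap Z_k\neq\emptyset$ for $\gamma(A)\ge k$ follows; $\gamma(S_k)=k$ by Borsuk--Ulam gives $c(k)\le a(k)$; and in the deformation step you correctly insist that $\eta$ be an odd \emph{homeomorphism}, hence fixing the origin and injective, so that $0\notin\eta(A)$ and $\Gamma_k$ is invariant --- this is exactly the point that would fail if one only knew $\eta$ to be an odd continuous map. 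Two things should be made explicit if your proof were inserted into the paper: since $H^{1,p}(M,\mathbb{S}(M))$ is a Banach, not Hilbert, space, the odd deformation must be built from an odd pseudo-gradient field for the even $C^{1}$ functional $L$ (standard; citable from \cite{Wil}); and the deformation lemma at the single level $c(k)$ needs only $(PS)_{c(k)}$, because non-criticality together with $(PS)_{c(k)}$ yields a lower bound on $\|\mathrm{d}L\|$ in a strip around $c(k)$ --- which is exactly why assuming $(PS)_{c}$ on all of $[b(k),a(k)]$ suffices without locating $c(k)$ in advance, as you note at the end.
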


Nowing using the proposition \ref{pro3.10}, we set
$$
H^{1,p}(M,\mathbb{S}(M))=H_k\oplus H_k^\perp,\quad H_k^\perp\subset [e_1^*,e_2^*,...,e_k^*]^{\perp}.
$$
where

$$
H_k=span\{e_1,e_2,...,e_k\},\quad H_k^\perp=\overline{span\{e_{k+1},e_{k+2},...\}},
$$
and
$$
[e_1^*,e_2^*,...,e_k^*]^{\perp}=\{\psi\in H^{1,p}(M,\mathbb{S}(M))|e_1^*(\psi)=\cdot\cdot\cdot=e_k^*(\psi)=0\}.
$$

First of all, we prove:

\begin{lemma}\label{lemma4.4}
    Assume $(Hii)$ is satisfied. For any $k\geq1$, there exist $r(k)>0$ and $a(k)<0$ such that $\mathfrak{L}(\psi)\le a(k)$ for any $\psi\in H_k$.
\end{lemma}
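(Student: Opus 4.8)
The plan is to exploit the lower bound on $H$ from $(Hii)$ together with the equivalence of all norms on the finite-dimensional space $H_k$. First I would substitute the inequality $H(x,\psi)\ge C_3|\psi|^\nu$ into the functional to obtain, for every $\psi\in H_k$,
\begin{equation}
\mathfrak{L}(\psi)=\frac{1}{p}\|\psi\|_{1,p}^p-\int_M H(x,\psi)\,dx
\le\frac{1}{p}\|\psi\|_{1,p}^p-C_3\int_M|\psi|^\nu\,dx
=\frac{1}{p}\|\psi\|_{1,p}^p-C_3\|\psi\|_\nu^\nu.\nonumber
\end{equation}
This already isolates the competition between the leading term $\|\psi\|_{1,p}^p$ and the lower-order (in the sense $\nu<p$) term $\|\psi\|_\nu^\nu$.

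Next I would use that $\dim H_k=k<\infty$, so that the restriction of the $L^\nu$-norm and of the $\|\cdot\|_{1,p}$-norm to $H_k$ are equivalent. This yields a constant $c(k)>0$, depending only on $k$, with $\|\psi\|_\nu\ge c(k)\|\psi\|_{1,p}$ for all $\psi\in H_k$. Inserting this into the previous estimate and evaluating on the sphere $\|\psi\|_{1,p}=r$ gives
\begin{equation}
\mathfrak{L}(\psi)\le\frac{1}{p}r^p-C_3\,c(k)^\nu r^\nu
=r^\nu\!\left(\frac{1}{p}r^{p-\nu}-C_3\,c(k)^\nu\right).\nonumber
\end{equation}

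Finally, since $1<\nu<p$, the factor $\tfrac{1}{p}r^{p-\nu}-C_3 c(k)^\nu$ is negative whenever $r$ is small enough, precisely for $r^{p-\nu}<pC_3 c(k)^\nu$. I would therefore fix $r(k)>0$ below this threshold and set $a(k):=r(k)^\nu\bigl(\tfrac{1}{p}r(k)^{p-\nu}-C_3 c(k)^\nu\bigr)<0$; by construction $\mathfrak{L}(\psi)\le a(k)$ for every $\psi\in H_k$ with $\|\psi\|_{1,p}=r(k)$, which is exactly hypothesis $(L_3)$ of Theorem~\ref{th4.3}. I do not expect a genuine obstacle here: the only point requiring care is that the norm-equivalence constant $c(k)$ depends on $k$ (harmless, since the conclusion is stated for each fixed $k$), and that the sublinearity $\nu<p$ is what makes the lower-order term dominate at \emph{small} radii rather than large ones, so $r(k)$ must be chosen small.
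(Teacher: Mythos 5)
Your proposal is correct and follows essentially the same route as the paper: the lower bound from $(Hii)$, equivalence of the $L^\nu$- and $\|\cdot\|_{1,p}$-norms on the finite-dimensional space $H_k$, and the choice of a small radius $r(k)$ exploiting $\nu<p$, with $a(k)=\frac{1}{p}r(k)^p-C(k)r(k)^\nu<0$. Your explicit remark that the estimate holds on the sphere $\|\psi\|_{1,p}=r(k)$ (as required by $(L_3)$, since it cannot hold at $\psi=0$) is a welcome clarification of a point the paper leaves implicit.
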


\begin{proof}
    Using $(Hii)$, we have
    \begin{align}\label{eq4.42}
        \mathfrak{L}(\psi)\le &\frac{1}{p}\int_{M}\left\langle\psi,D_{p}\psi\right\rangle dx-C_3\int_{M}|\psi|^{\nu}dx
        \nonumber\\
         =&\frac{1}{p}\left\|\psi\right\|_{1,p}^{p}-C_3\int_{M}|\psi|^{\nu}dx.
    \end{align}

    Since $\dim H_k= k<\infty$, the $L^{\mu}$ and the $H^{1,p}-$norms are equivalent on $H_k$, hence there exists a  $C(k)>0$ (depending only on $k$) such that
     $C(k)\left\|\psi\right\|_{1,p}^\nu\le C_3\left\|\psi\right\|_{\nu}^\nu$ hold for any $\psi\in H_k$. By \eqref{eq4.42},
     we can conclude
     \begin{equation}\label{eq4.41}
         \mathfrak{L}(\psi) \le \frac{1}{p}\left\|\psi\right\|_{1,p}^{p}-C(k)\left\|\psi\right\|_{1,p}^{\nu}
     \end{equation}
     for any $\psi\in H_k$.

     From \eqref{eq4.41}, since $1<\nu<p$, we easily see that there exists $r(k)>0$ small enough such that $\mathfrak{L}(\psi)\le\frac{1}{p}\left\|r(k)\right\|_{1,p}^{p}-C(k)\left\|r(k)\right\|_{1,p}^{\nu} <0$
     for any  $\psi\in H_k$ and $\left\|\psi\right\|_{1,p}=r(k)$. We set $a(k)=\frac{1}{p}\left\|r(k)\right\|_{1,p}^{p}-C(k)\left\|r(k)\right\|_{1,p}^{\nu}<0$, This completes the assertion of the lemma.
\end{proof}

To proceed further, we need the follows:

\begin{lemma}\label{lemma4.5}
    Suppose $(Hi)$ is satisfied. Then there exists $b(k)<0$ such that  $b(k)\to 0$ as $k\to \infty$ and
$\inf\{L(\psi):\psi\in \mathbb{R}e_k\oplus Z_{k+1}\}\geq b(k)$.
\end{lemma}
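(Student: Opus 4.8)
The plan is to mirror the tail estimate of Lemma \ref{lemma3.13}, but now exploiting that the growth exponent $q$ in $(Hi)$ lies \emph{below} $p$, so that the $p$-homogeneous leading term of $\mathfrak{L}$ dominates and produces a \emph{negative} lower bound that decays to $0$. The two ingredients are the sublinear bound coming from $(Hi)$ and the decay of the embedding constants on the tail subspaces furnished by Lemma \ref{lemma3.12}.

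First I would record the sublinear bound on the potential. Integrating the pointwise estimate $|H_\psi(x,\psi)|\le C_2(1+|\psi|^{q-1})$ of $(Hi)$ along the fibre direction and using $H(x,0)=0$ gives
\begin{equation}
|H(x,\psi)|\le C\left(|\psi|+|\psi|^q\right)\qquad\text{for all }(x,\psi)\in\mathbb{S}(M),
\nonumber
\end{equation}
whence $\int_M H(x,\psi)\,dx\le C\|\psi\|_1+C\|\psi\|_q^q$. Next I would set up the geometry: since $\mathbb{R}e_k\oplus Z_{k+1}=\overline{\operatorname{span}\{e_j\}_{j\ge k}}\subset[e_1^*,\dots,e_{k-1}^*]^{\perp}$, Lemma \ref{lemma3.12} applied with $E=H^{1,p}(M,\mathbb{S}(M))$ and $E'=L^q(M,\mathbb{S}(M))$ (a compact embedding because $q<p^*$) yields a constant $\beta_k:=\tau_{k-1}^{-1}$ with
\begin{equation}
\|\psi\|_q\le\beta_k\|\psi\|_{1,p},\qquad \|\psi\|_1\le C\beta_k\|\psi\|_{1,p}\quad(\psi\in\mathbb{R}e_k\oplus Z_{k+1}),
\nonumber
\end{equation}
the second inequality following from H\"older, and $\beta_k\to 0$ as $k\to\infty$.

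Combining these, for $\psi\in\mathbb{R}e_k\oplus Z_{k+1}$ I would estimate
\begin{equation}
\mathfrak{L}(\psi)\ge\frac{1}{p}\|\psi\|_{1,p}^p-C\beta_k\|\psi\|_{1,p}-C\beta_k^q\|\psi\|_{1,p}^q,
\nonumber
\end{equation}
and then absorb each of the two negative terms into $\tfrac1p\|\psi\|_{1,p}^p$ by Young's inequality, using the conjugate exponents $p$ and $\tfrac{p}{p-1}$ for the linear term, and $\tfrac{p}{q}$ and $\tfrac{p}{p-q}$ for the $q$-term. Since the remaining multiple of $\|\psi\|_{1,p}^p$ is nonnegative, this leaves
\begin{equation}
\mathfrak{L}(\psi)\ge -C_1\beta_k^{\frac{p}{p-1}}-C_2\beta_k^{\frac{qp}{p-q}}=:b(k),
\nonumber
\end{equation}
which is strictly negative and, because both exponents are positive and $\beta_k\to0$, satisfies $b(k)\to 0$ as $k\to\infty$; this is exactly condition $(L_4)$.

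The main obstacle is precisely the decay $b(k)\to 0$. It rests on two points: on Lemma \ref{lemma3.12} to guarantee $\beta_k\to 0$ (this is where the compactness of $H^{1,p}\hookrightarrow L^q$, i.e. subcriticality $q<p^*$, and the totality of the biorthogonal functionals enter), and on the vanishing of $H$ at the origin, so that the bound for $H$ carries no additive constant and Young's inequality produces only pure positive powers of $\beta_k$ with nothing surviving in the limit. Were a constant term to remain, the infimum over the whole subspace would be pushed down to a fixed negative value rather than to $0$, so controlling that constant is the delicate part of the argument.
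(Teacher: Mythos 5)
Your proposal is correct and follows essentially the same route as the paper: both derive the sublinear bound $H(x,\psi)\le C(|\psi|+|\psi|^q)$ from $(Hi)$, apply Lemma \ref{lemma3.12} on the tail subspace (the paper invokes it twice, with $E'=L^q$ and $E'=L^1$, where you get the $L^1$ control from $L^q$ by H\"older), and obtain $b(k)=-C_1\tau_k^{-p/(p-1)}-C_2\tau_k^{-qp/(p-q)}\to 0$, your Young's-inequality absorption being just a repackaging of the paper's splitting of $\frac{1}{p}\|\psi\|_{1,p}^p$ into two halves and minimizing the resulting one-variable functions. Your explicit remark that the normalization $H(x,0)=0$ is what keeps an additive constant out of the bound (and hence lets $b(k)\to 0$) is a point the paper's estimate \eqref{eq4.44} uses only implicitly.
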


\begin{proof}
Using the condition $(Hi)$, there exist $C>0$ such that
\begin{equation}\label{eq4.44}
  H(x,\psi)\leq C|\psi|+C|\psi|^q.
\end{equation}

Therefore,  think of $L^q(M,\mathbb{S}(M))$ as $E'$ in lemma \ref{lemma3.12}, by \eqref{eq4.44}, we have for $\psi \in H_{k-1}^\perp$
    \begin{align}\label{eq4.45}
        \mathfrak{L}(\psi)=&\frac{1}{p}\int_{M}\left\langle\psi,D_{p}\psi\right\rangle dx-\int_{M}H(x,\psi)dx
        \nonumber\\
       \geq&\frac{1}{p}\int_{M}|D\psi|^pdx-C\int_M|\psi|^qdx-C\int_M|\psi|dx
        \nonumber\\
        \ge&\frac{1}{p}\|\psi\|_{1,p}^{p}-C\tau_{k,q}^{-q} \|\psi\|_{1,p}^{q}-C\tau_{k,1}^{-1} \|\psi\|_{1,p}
        \nonumber\\
        =&\left(\frac{1}{2p}\|\psi\|_{1,p}^{p}-C\tau_{k,q}^{-q} \|\psi\|_{1,p}^{q}\right)+\left(\frac{1}{2p}\|\psi\|_{1,p}^{p}-C\tau_{k,1}^{-1} \|\psi\|_{1,p}\right).
    \end{align}

    Since the minimum of the function $s\mapsto \frac{1}{2p}s^{p}-C\tau_{k,q}^{-q}s^{q}$ is $\frac{q-p}{2pq} (2qC\tau_{k,q}^{-q})^\frac{p}{p-q}$ and is
    attained at $s=(2qC\tau_{k,q}^{-q})^\frac{1}{p-q}$.
    The minimum of the function $s\mapsto \frac{1}{2p}s^{p}-C\tau_{k,1}^{-1}s$ is $\frac{1-p}{2p} (2C\tau_{k,1}^{-1})^\frac{p}{p-1}$ and is
    attained at $s=(2C\tau_{k,1}^{-1})^\frac{1}{p-1}$.

Let $b(k)=\frac{q-p}{2pq} (2qC\tau_{k,q}^{-q})^\frac{p}{p-q}+\frac{1-p}{2p} (2C\tau_{k,1}^{-1})^\frac{p}{p-1}$.
     Since $1<q<p$, by lemma \ref{lemma3.12}, we obtain $b(k)<0$ and $b(k)\to 0$
    as $k\to \infty$. This completes the proof.
\end{proof}

We are now in a position to complete the proof of Theorem 1.4.

\textbf{Proof of theorem 1.4: }
     By theorem \ref{th4.3}, lemma \ref{lemma4.2}, lemma \ref{lemma4.4}, lemma \ref{lemma4.5}. For each $k\geq1$ and $b(k)\leq c_k\leq a(k) $, there exist  critical points sequence $\{\psi_k\}\subset H^{1,p}(M,\mathbb{S}(M))$ (after taking a subsequence if necessary) satisfying
$$
\mathfrak{L}(\psi_k)= c_k,\quad \mathfrak{L}'(\psi_k)= 0,
$$
where $c_k$ is a critical value of $\mathfrak{L}$. Since $b(k)\leq c_k\leq a(k) $ and $b(k)\to 0$ as $k\rightarrow \infty$.
The proof is completed.


\newpage

    \renewcommand{\thesection}{\Alph{section}}
 \section{}
\begin{proposition}\label{prop:A.1.}
Assume that  $H\in C^0(M,\mathbb{S}(M))$ is $C^1$ in the fiber directioni.e., $H_\psi(x,\psi)\in C^0(M,\mathbb{S}(M))$, and there exist $q\in (1,p^*)$ and $C>0$ such that
    \begin{equation}\label{eq:A.1}
        |H_{\psi}(x,\psi)|\le C\left(1+|\psi|^{q-1}\right)
    \end{equation}
for any $(x,\psi)\in \mathbb{S}(M)$.
    Then functional  $\mathcal{H}:H^{1,p}(M,\mathbb{S}(M))\to \mathbb{R}$
 defined by
 \begin{equation} \label{eq:A.2}
\mathcal{H}(\psi)=\int_MH(x,\psi)dx,
\end{equation}
 is of class $C^1$, and  at each $\psi\in H^{1,p}(M,\mathbb{S}(M))$, (Fr$\acute{e}$chet) derivations $\mathcal{H}^\prime(\psi)$
 is given by
\begin{equation} \label{eq:A.3}
\mathcal{H}^\prime(\psi)\xi=\int_M\langle H_\psi(x,\psi),\xi\rangle dx\quad\forall \xi\in H^{1,p}(M,\mathbb{S}(M)).
\end{equation}
\end{proposition}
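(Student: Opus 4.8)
The plan is to realize $\mathcal{H}$ as the composition of the continuous Sobolev embedding $\iota\colon H^{1,p}(M,\mathbb{S}(M))\hookrightarrow L^q(M,\mathbb{S}(M))$ (valid for $q\in(1,p^*)$) with a functional defined on $L^q$, and to reduce the $C^1$ regularity to the continuity of an associated Nemytskii (superposition) operator. First I would record an integrability bound on $H$ itself: writing $H(x,\psi)=H(x,0)+\int_0^1\langle H_\psi(x,t\psi),\psi\rangle\,dt$ and using \eqref{eq:A.1} together with the compactness of $M$ (so that $H(\cdot,0)$ is bounded), one obtains $|H(x,\psi)|\le C(1+|\psi|^q)$. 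This guarantees that $\psi\mapsto\int_M H(x,\psi)\,dx$ is finite and well-defined on $L^q$, hence on $H^{1,p}(M,\mathbb{S}(M))$ through $\iota$.

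Second, the key structural fact: the map $f\colon u\mapsto H_\psi(\cdot,u(\cdot))$ is a bounded, continuous operator from $L^q(M,\mathbb{S}(M))$ into $L^{q'}(M,\mathbb{S}(M))$, where $q'=q/(q-1)$. Indeed \eqref{eq:A.1} gives $|H_\psi(x,u)|^{q'}\le C(1+|u|^{(q-1)q'})=C(1+|u|^q)$, since $(q-1)q'=q$, so $f$ maps $L^q$ boundedly into $L^{q'}$; the continuity is the content of Krasnoselskii's theorem on Nemytskii operators for Carath\'eodory maps with this growth. I expect this step to be the main obstacle, as it is where the precise matching of exponents and the continuity theorem must be invoked carefully, and it is what forces the restriction $q<p^*$ through the embedding.

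Third, I would compute the G\^ateaux derivative. For fixed $\psi,\xi$ and $|t|\le 1$, the mean value theorem in the fiber direction yields
$$\frac{H(x,\psi+t\xi)-H(x,\psi)}{t}=\langle H_\psi(x,\psi+\theta_t(x)\,t\xi),\xi\rangle$$
for some $\theta_t(x)\in(0,1)$; the right-hand side is dominated by $C(1+(|\psi|+|\xi|)^{q-1})|\xi|$, which lies in $L^1$ by H\"older's inequality (the first factor in $L^{q'}$, the second in $L^q$). Dominated convergence then gives $\mathcal{H}'(\psi)\xi=\int_M\langle H_\psi(x,\psi),\xi\rangle\,dx$, which is exactly \eqref{eq:A.3}, and the continuity of $\iota$ shows this is a bounded linear functional of $\xi$.

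Finally, to upgrade G\^ateaux differentiability to Fr\'echet differentiability (that is, to $C^1$), I would observe that $\mathcal{H}'(\psi)=\iota^*\circ f\circ\iota$, where $\iota^*\colon L^{q'}\cong (L^q)^*\to (H^{1,p}(M,\mathbb{S}(M)))^*$ is the continuous adjoint of the embedding. Since $\iota$, $f$, and $\iota^*$ are each continuous, the map $\psi\mapsto\mathcal{H}'(\psi)$ is continuous from $H^{1,p}(M,\mathbb{S}(M))$ into its dual; a G\^ateaux differentiable functional with continuous G\^ateaux derivative is Fr\'echet differentiable. Hence $\mathcal{H}\in C^1$ with derivative given by \eqref{eq:A.3}.
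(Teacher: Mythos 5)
Your proposal is correct and follows essentially the same route as the paper's own proof: the mean value theorem in the fiber direction plus dominated convergence (with the H\"older/Sobolev domination through the embedding $H^{1,p}\hookrightarrow L^q$, $q<p^*$) to get the G\^ateaux derivative \eqref{eq:A.3}, and then continuity of the Nemytskii operator $u\mapsto H_\psi(\cdot,u)$ from $L^q$ into $L^{q/(q-1)}$ to upgrade to a continuous Fr\'echet derivative. Your factorization $\mathcal{H}'=\iota^*\circ f\circ\iota$ and the preliminary bound $|H(x,\psi)|\le C(1+|\psi|^q)$ are only presentational refinements of the paper's hand-written H\"older duality estimate, not a different argument.
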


\noindent{\bf Proof of Proposition~\ref{prop:A.1.}}.\quad

\noindent{\bf Step 1.}\; {\it $\mathcal{H}$ is G\^ateaux differentiable}.

Given $\psi, \xi\in H^{1,p}(M,\mathbb{S}(M))$ and $t\in (-1,1)\setminus\{0\}$ we have, by the classical Mean Value Theorem,
\begin{eqnarray*}
\frac{H(x,\psi+t\xi)-H(x,\psi)}{t}
=\langle H_\psi(x,\psi+\theta\xi),\xi\rangle
\end{eqnarray*}
for some $\theta=\theta(t,x, \psi, \xi)\in (0,1)$, and thus (\eqref{eq:A.1}) yields
\begin{eqnarray*}
\left|\frac{H(x,\psi+t\xi)-H(x,\psi)}{t}\right|
&\le& C\left(1+|\psi+\theta\xi|^{q-1}\right)|\xi|\\
&\le& C\left(1+ 2^{q-1}|\psi|^{q-1}+ 2^{q-1}|\xi|^{q-1} \right)|\xi|.
\end{eqnarray*}
Recalling $m\ge 2$ and $m>p$,   by H\"{o}lder inequalities
$$
\int_M|\psi|^{q-1}|\xi|\le\left(\int_M|\psi|^{(q-1)mp/(mp-m+p)}dx\right)^{\frac{mp-m+p}{mp}}
\left(\int_M|\xi|^{mp/(m-p)}dx\right)^{\frac{m-p}{mp}}.
$$
Since $(q-1)mp/(mp-m+p)\le \frac{mp}{m-p}$,
we derive
\begin{eqnarray}\label{eq:A.4}
\left.\begin{array}{ll}
&\left(\int_M|\psi|^{(q-1)mp/(mp-m+p)}dx\right)^{\frac{mp-m+p}{(q-1)mp}}\le C(m,q)\|\psi\|_{1,p},\\
&\|\xi\|_{mp/(m-p)}\le C(m)\|\xi\|_{1,p}
\end{array}\right\}
\end{eqnarray}
by Sobolev embedding theorems. These imply that
$\left(1+ 2^{q-1}|\psi|^{q-1}+ 2^{q-1}|\xi|^{q-1} \right)|\xi|$
is integrable. Using the Lebesgue Dominated Convergence Theorem we deduce
\begin{eqnarray*}
\lim_{t\to 0}\frac{\mathcal{H}(x,\psi+t\xi)-\mathcal{H}(x,\psi)}{t}
&=&\int_M\lim_{t\to 0}\frac{H(x,\psi+t\xi)-H(x,\psi)}{t}dx\\
&=&\int_M\langle H_\psi(x,\psi),\xi\rangle  dx.
\end{eqnarray*}
Since  (\ref{eq:A.1}) yields
\begin{eqnarray*}
\int_M|\langle H_\psi(x,\psi),\xi\rangle| dx\leq c_1\int_M\big(1+|\psi|^{q-1}\big)|\xi|dx,
\end{eqnarray*}
(\ref{eq:A.4}) leads to
\begin{eqnarray*}
&&\int_M|\langle H_\psi(x,\psi),\xi\rangle|dx\leq C\big(1+\|\psi\|_{1,p}^{q-1}\big)\|\xi\|_{1,p}.
\end{eqnarray*}
Hence  the G\^ateaux derivative $D\mathcal{H}(\psi)$ exists and
\begin{eqnarray}\label{eq:A.5}
D\mathcal{H}(\psi)\xi=\int_M\langle H_\psi(x,\psi),\xi\rangle dx.
\end{eqnarray}

\noindent{\bf Step 2.}\; {\it $D\mathcal{H}: H^{1,p}(M,\mathbb{S}(M))\to ( H^{1,p}(M,\mathbb{S}(M)))^\ast$ is continuous and thus $\mathcal{H}$
has continuous (Fr\'echet) derivative $\mathcal{H}'=D\mathcal{H}$}.

Let $\psi_n\to \psi$ in $H^{1,p}(M,\mathbb{S}(M))$,
 by the Sobolev embedding theorem yields a positive constant  $c_r>0$, such that
\begin{eqnarray}\label{eq:A.6}
 \|\psi_n-\psi\|_{q} \leq c\|\psi_n-\psi\|_{1,p}\to 0,\quad \text{as}\ n\to\infty.
\end{eqnarray}
By the definition and the H\"older
inequality we have
\begin{eqnarray}\label{eq:A.7}
&&\|D\mathcal{H}(\psi_n)- D\mathcal{H}(\psi)\|_{1,p\ast}=
\sup_{\|\xi\|_{1,p}\le 1}\langle D\mathcal{H}(\psi_n)- D\mathcal{H}(\psi), \xi\rangle\nonumber\\
&=&\sup_{\|\xi\|_{1,p}\le 1}\int_M(H_\psi(x,\psi_n)- H_\psi(x,\psi))\xi dx\nonumber\\
&\le&\left(\sup_{\|\xi\|_{1,p}\le 1}\|\xi\|_{q}\right)
\cdot\left(\int_M|H_\psi(x,\psi_n)- H_\psi(x,\psi)|^{\frac{q}{q-1}}dx\right)^{\frac{q-1}{q}}\nonumber\\
&\le& c\left(\int_M|H_\psi(x,\psi_n)- H_\psi(x,\psi)|^{\frac{q}{q-1}}dx\right)^{\frac{q-1}{q}}.
\end{eqnarray}
Obverse that (\ref{eq:A.1}) implies the
Nemytski operator
$$
H_\psi(x,\cdot): L^{q}(M, \mathbb{S}(M))\to L^{\frac{q}{q-1}}(M, \mathbb{S}(M)),\;
\psi\mapsto H_\psi(x,\psi)
$$
is continuous. This and (\ref{eq:A.7})
yield continuality of $D\mathcal{H}$.



\end{document}